\numberwithin{equation}{section}
\newtheorem{theorem}{Theorem}[section]
\newtheorem{lemma}[theorem]{Lemma}
\newtheorem{proposition}[theorem]{Proposition}
\newtheorem{remark}[theorem]{Remark}
\newtheorem*{theorem*}{Theorem}
\newtheorem*{lemma*}{Lemma}
\newtheorem*{proposition*}{Proposition}
\newtheorem*{corollary*}{Corollary}
\renewcommand\tilde{\widetilde}
\def\R{\mathbb{R}}
\def\Z{\mathbb{Z}}
\def\N{\mathbb{N}}
\def\EE{\mathbb{E}}
\def\PP{\mathbb{P}}
\def\P{\mathbb{P}}
\def\LM#1{\hbox{\vrule width.2pt \vbox to#1pt{\vfill \hrule width#1pt
height.2pt}}}
\def\LL{{\mathchoice {\>\LM7\>}{\>\LM7\>}{\,\LM5\,}{\,\LM{3.35}\,}}}
\def\restr{{\LL}}
\renewcommand{\phi}{\varphi}
\def\1{\mathbf{1}}
\def\loc{\mathrm{loc}}
\def\XXint#1#2#3{{\setbox0=\hbox{$#1{#2#3}{\int}$ }
\vcenter{\hbox{$#2#3$ }}\kern-.57\wd0}}
\def\eps{\varepsilon}
\def\lt{\left}
\def\rt{\right}
\def\les{\lesssim}
\def\ges{\gtrsim}
\def\spt{\textup{Spt}\,}
\newcommand{\bra}[1]{\left( #1 \right)}
\newcommand{\sqa}[1]{\left[ #1 \right]}
\newcommand{\nor}[1]{\left\| #1 \right\|}
\begin{document}
\title[A fluctuation result for the displacement in the optimal matching problem]{
A fluctuation result for the displacement in the optimal matching problem}
\author{Michael Goldman \address[Michael Goldman]{Universit\'e de Paris, CNRS,   Laboratoire Jacques-Louis Lions (LJLL),  France} \email{michael.goldman@u-paris.fr} \hspace*{0.5cm} Martin Huesmann \address[Martin Huesmann]{Universit\"at M\"unster,  Germany} \email{martin.huesmann@uni-muenster.de}}
\thanks{We warmly thank F. Otto for numerous discussions at various stages of the project. We also thank N. Berestycki and J. Aru for suggesting the connection between the standard GFF and the curl-free variant we consider (see Remark \ref{rem:GFF}). MH is funded by the Deutsche Forschungsgemeinschaft (DFG, German Research Foundation) under Germany's Excellence Strategy EXC 2044 -390685587, Mathematics M\"unster: Dynamics--Geometry--Structure and   by the DFG through the SPP 2265 {\it Random Geometric Systems. MG is partially funded by the ANR project SHAPO  }}

\begin{abstract}
 The aim of this paper is to justify in dimensions two and three the ansatz of Caracciolo et al.\ stating that  the displacement in the optimal matching problem is
 essentially given by the solution to the linearized equation i.e.\ the Poisson equation. Moreover, we prove that at all mesoscopic scales, this displacement is
 close in suitable negative Sobolev spaces to a curl-free Gaussian free field.
 For this we combine a quantitative estimate on the  difference between the displacement and the linearized object, which is based on the large-scale regularity theory recently developed in collaboration with F. Otto, together with a qualitative convergence result for the linearized problem.
\end{abstract}

\date{\today}
\maketitle

\section{Introduction}

The optimal matching problem is one of the classical random optimization problems which has received constant attention in the probability and mathematical physics literature over the last 30 years, e.g.\ \cite{AKT84, Ta94, Yu98, Ta14, FoGu15, DeScSc13, CaSi14, CaLuPaSi14, AmStTr16, AmGlaTre, Le17, Le18, BobLe}. We are interested in one of its simplest and most studied variants: Let $X_1,X_2,\ldots$ be iid uniformly distributed random variables on the torus $Q_L=[-\frac{L}{2},\frac{L}{2})^d$ and put $$\mu^{R,L}=\frac1{R^d} \sum_{i=1}^{(RL)^d} \delta_{X_i}.$$ Notice that the typical distance between points is  $1/R$. The optimal matching problem is then
\begin{equation}\label{eq:matching}
\mathsf{C}_{R,p,d}=\EE\lt[\inf_{\pi\in\mathsf{Cpl}(\mu^{R,L},\mathsf{Leb}\restr Q_L)}\int_{Q_L\times Q_L} |x-y|^p d\pi\rt],
\end{equation}
where $\mathsf{Cpl}(\mu,\nu)$ denotes the set of all couplings between $\mu$ and $\nu$. By now, the asymptotic behaviour of the expected cost $\mathsf{C}_{R,p,d}$  is well understood, e.g.\ \cite{AKT84, BaBo13,DeScSc13, AmStTr16, GolTre}, see also \cite{FoGu15} for deviation estimates. 
 We refer to  \cite{Le19} for a fluctuation result of the transport cost in $H^{-1}$.
In dimension one, the behaviour of the optimal transport cost is very well understood also beyond the case of uniformly distributed points. The main reason is that the optimal couplings --the minimizers of \eqref{eq:matching}-- are explicit, see \cite{BoLe19}.

Turning to the asymptotic behaviour of the optimal coupling itself, respectively the displacement under the optimal coupling, not much is known, see \cite{HuSt13} for a global existence result on invariant couplings between the Lebesgue measure and a Poisson point process.
In this article, we focus on the quadratic case $p=2$ and show that at all mesoscopic scales the averaged displacement converges to a Gaussian field. At the macroscopic scale, a related result has been obtained in \cite{AmGlaTre}. To explain our result we need to introduce some notation.

We fix $p=2$ and consider the $Q_L$-periodic version of \eqref{eq:matching}. Denote the $Q_L$-periodic optimal coupling between $\mu^{R,L}$ and $\mathsf{Leb}\restr Q_L$ by $\pi^{R,L}.$
We define the distribution $Z^{R,L}$ by
\[
 Z^{R,L}(f)=R^{\frac{d}{2}}\int_{Q_L\times \R^d} f(x) (y-x) d\pi^{R,L}.
\]
Notice  that if $T$ is the optimal transport map from $\mathsf{Leb}$ to $\mu^{R,L}$, i.e.\ $\pi^{R,L}=(T,Id)_\#(\mathsf{Leb}\restr Q_L)$, and if $A_i=\{y : T(y)=X_i\}$ are the corresponding Laguerre cells then 
\begin{equation}\label{Z}
 Z^{R,L}=R^{\frac{d}{2}}\sum_{i=1}^{(RL)^d} \lt(\int_{A_i} (y-X_i) dy\rt)\delta_{X_i}. 
\end{equation}
For $d\geq 3$ and $f\in C_c^\infty(\R^d,\R^d)$ let $\phi^\infty$ be the unique solution in $L^2(\R^d)$ of\footnote{we use the notation $\nabla \cdot f$ for the divergence and $\Delta f=\nabla\cdot(\nabla f)$ for the Laplacian.} 
$$-\Delta \phi^\infty=\nabla\cdot f \qquad \text{in } \R^d.$$
We then define the curl-free Gaussian free field $\nabla \Psi$ (see Remark \ref{rem:GFF} for the connection with the standard Gaussian free field) by requiring
$$ \mathsf{Law}(\nabla \Psi(f)) =\mathcal N\lt(0,\int (\phi^\infty)^2\rt),$$
so that, denoting by $W$ white noise on $\R^d$, $\Psi$ formally solves
$$\Delta\Psi=W \qquad \text{in } \R^d.$$
For $\gamma>0,$ $ p\geq 1,$ we denote the local fractional Sobolev space by $W^{-\gamma,p}_{loc}$, see Section \ref{sec:negSob}.
We then have the following result:
\begin{theorem}\label{thm:intro1}
 For $d=3$, $p\geq 2,$ $\gamma> d\lt(1-\frac1p\rt),$ and any sequences $R,L \to \infty$ the distribution $ Z^{R,L}$ converges in law in $W^{-\gamma,p}_{loc}$ to $\nabla \Psi.$ Moreover, for every fixed $\ell\ge 1$, $Z^{R,L}$ has bounded moments of arbitrary order in $W^{-\gamma,p}(B_{\ell})$. 
\end{theorem}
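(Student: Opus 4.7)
The strategy is the two-step decomposition suggested by the abstract: introduce a linearized proxy $\tilde Z^{R,L}$ for $Z^{R,L}$ built from a Poisson equation, and then argue that the error $Z^{R,L}-\tilde Z^{R,L}$ vanishes quantitatively while $\tilde Z^{R,L}$ converges qualitatively to $\nabla\Psi$. Concretely, let $h^{R,L}$ denote the $Q_L$-periodic, zero-mean solution of $-\Delta h^{R,L}=\mu^{R,L}-1$ in $Q_L$, and define
\[
 \tilde Z^{R,L}(f) := R^{d/2}\int_{Q_L}\nabla h^{R,L}\cdot f\,dx.
\]
This is the Caracciolo et al.\ ansatz: $-\nabla h^{R,L}$ is the displacement predicted by the linearization of the Monge--Amp\`ere equation around the Lebesgue measure. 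Integrating by parts, $\tilde Z^{R,L}(f)=R^{d/2}\int(\mu^{R,L}-1)g_f$, where $-\Delta g_f=\nabla\cdot f$ on $Q_L$; for $f$ compactly supported and $L$ large, $g_f$ approximates the whole-space potential $\phi^\infty$.

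The quantitative half would be to show that $E^{R,L}:=Z^{R,L}-\tilde Z^{R,L}$ satisfies $\EE\|E^{R,L}\|_{W^{-\gamma,p}(B_\ell)}^q\to 0$ for every $\ell$ and every $q<\infty$. This is the step that invokes the large-scale regularity theory developed with F.\ Otto, which provides quantitative $L^2$-control of the displacement $T(y)-y$ by $-\nabla h^{R,L}(y)$ on mesoscopic dyadic blocks, with an error of lower order than $R^{-d/2}$ in a suitable averaged sense. One then tests against functions in the predual space $W^{\gamma,p'}$, $1/p+1/p'=1$; the Sobolev embedding $W^{\gamma,p'}\hookrightarrow L^\infty$ is valid precisely when $\gamma>d/p'=d(1-1/p)$, which is where the threshold on $\gamma$ enters.

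The qualitative half shows $\tilde Z^{R,L}(f)\to \nabla\Psi(f)$ in law. By a central limit theorem for the i.i.d.\ Dirac masses $\delta_{X_i}$, $R^{d/2}(\mu^{R,L}-1)$ converges in law in $W^{-\gamma,p}_{\loc}$ to Gaussian white noise $W$ on $\R^d$, and the same threshold $\gamma>d(1-1/p)$ again governs the admissibility of the test space. Since $g_f\to\phi^\infty$ in $W^{\gamma,p'}(\R^d)$ as $L\to\infty$, owing to the integrable decay of the Riesz kernel $\nabla(-\Delta)^{-1}\nabla\cdot$ in $d=3$, one concludes $\tilde Z^{R,L}(f)\to W(\phi^\infty)$, which by construction has the law of $\nabla\Psi(f)$.

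Finally, I would combine a uniform moment bound $\sup_{R,L}\EE\|\tilde Z^{R,L}\|_{W^{-\gamma,p}(B_\ell)}^q<\infty$ (from the Gaussian-type structure of $\tilde Z^{R,L}$) with the vanishing error bound from the second paragraph to get the bounded-moments claim and tightness in $W^{-\gamma,p}_{\loc}$; combined with the convergence of finite-dimensional distributions from the third paragraph, this yields convergence in law. I expect the main difficulty to be the error estimate: transferring the large-scale regularity estimate, which lives on dyadic mesoscopic blocks and is stated in $L^2$-type norms, into a negative Sobolev bound on $E^{R,L}$, uniformly in $L$ and with explicit control of the torus-to-whole-space discrepancy in $g_f-\phi^\infty$, is by far the most delicate part of the argument.
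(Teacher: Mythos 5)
Your decomposition $Z^{R,L}=\tilde Z^{R,L}+E^{R,L}$ is exactly the paper's two-step scheme (up to sign conventions: the paper's $\nabla u^{R,L}$ with $\Delta u^{R,L}=R^{d/2}(\mu^{R,L}-1)$ plays the role of your $-R^{d/2}\nabla h^{R,L}$), and both the qualitative linear part and the quantitative error part are correctly identified. However, the proposal leaves the central technical mechanism unnamed, and this is a genuine gap rather than a detail: the paper's whole scheme is made to work by Theorem~\ref{theo:mainnegsob}, a novel characterization of \emph{local} negative Sobolev norms via convolution averages,
\[
\|u\|_{W^{-\gamma,p}(B_\ell)}^p\lesssim\int_0^1\eps^{p\gamma}\int_{B_{2\ell}}|u*\eta_\eps|^p\,\frac{d\eps}{\eps},
\]
together with the variant Lemma~\ref{lem:removesmallscale} that removes the singular microscopic scales $\eps<1/R$. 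It is precisely this tool that converts the pointwise, scale-by-scale moment bounds (Lemmas~\ref{lem:boundmu} and~\ref{lem:boundnablu} for the linear part, Propositions~\ref{prop:quenched} and~\ref{theop} for the displacement) into uniform $W^{-\gamma,p}(B_\ell)$ moment bounds, both for tightness of $\nabla u^{R,L}$ and for the estimate of $\|Z^{R,L}-\nabla u^{R,L}\|_{W^{-\gamma,p}(B_\ell)}$ in Theorems~\ref{theo:quantsmall} and~\ref{theo:quantall}. Saying ``test against $W^{\gamma,p'}$ and use Sobolev embedding into $L^\infty$'' does not suffice, because the input from \cite{GHO} (Theorem~\ref{theo:main}, after adaptation to the torus and the stochastic control of the random radius $r_{*,L}$ in Theorem~\ref{thm:mainstoch}) is an estimate on the mollified displacement $\int\eta_r(x)(y-x-h_r)\,d\pi$ at each dyadic scale $r$, and one needs a systematic way to resum these scales into a local negative-Sobolev norm.

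Two smaller points. First, the threshold $\gamma>d(1-1/p)$ is not driven by the white-noise convergence step as you suggest; $W$ itself only requires $\gamma>d/2$, and $\nabla u^{R,L}\to\nabla\Psi$ holds already under the weaker $\gamma>d(1-1/p)-1$ (Theorem~\ref{theo:convlinearbody}). The binding constraint is that $Z^{R,L}$ itself is atomic (a weighted sum of Dirac masses, see~\eqref{Z}), and Dirac masses lie in $W^{-\gamma,p}$ precisely for $\gamma>d(1-1/p)$ (Remark~\ref{rem:sobwhiteDir}); correspondingly, the paper also proves a refined version (Theorem~\ref{theo:quantsmall} and part B of Theorem~\ref{thm:maincvg}) where mollifying $Z^{R,L}$ at scale $1/R$ lowers the threshold to the natural $\gamma>d/2-1$. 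Second, for the identification of the limit, rather than passing through convergence of $W^{R,L}$ to white noise as a distribution and then testing against $g_f\to\phi^\infty$ (which needs care because the torus solution $\phi^L$ is not compactly supported), the paper fixes $f$, writes $\nabla u^{R,L}(\tilde f)=W^{R,L}(\phi^L)$, and runs a direct CLT argument on this scalar random variable using $\int_{Q_L}(\phi^L)^2\to\int(\phi^\infty)^2$; your route can be repaired, but requires a cutoff argument that the paper's approach avoids.
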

This result is indeed about mesoscopic scales since, up to rescaling, the macroscopic scale corresponds to $L=1$, $R\to \infty$ while the microscopic scale is seen in the opposite regime $L\to \infty$ and $R=1$. Let us point out that up to the appropriate modification in the definition of the Gaussian field $\nabla \Psi$  our result could be easily extended to the macroscale (see also Remark \ref{rem:macroscale}). We leave the details to the reader.  The microscopic behavior is more subtle to analyze and of fundamentally different nature. We refer to \cite{GHO1} for preliminary results in this direction.\\
We expect Theorem \ref{thm:intro1} to hold also for $d>3$. However, central to our proof  are the deterministic quantitative estimates  from \cite{GHO} which are not precise enough to conclude in $d>3$. In $d=2$, Theorem \ref{thm:intro1} cannot hold since $\nabla \Psi$ does not exist. However, $\nabla \Psi(f)$ is well defined in $d=2$ for functions $f$ with $\int f=0$. Said differently, $\nabla \Psi$ is well defined only up to a constant. Hence, to obtain a version of Theorem \ref{thm:intro1} some kind of renormalization is needed. To this end, we fix a smooth radial  cutoff function $\eta\ge 0$ with $\int\eta=1.$ Define $\phi^\infty$ to be the $L^2(\R^2)$ solution of 
$$-\Delta \phi^\infty=\nabla \cdot \lt(f-\eta\int f \rt) \qquad \text{in } \R^d.$$
We then define $\nabla\Psi-\nabla\Psi_1(0)$ by requiring
$$\mathsf{Law}((\nabla \Psi-\nabla\Psi_1(0))(f))=\mathcal N\lt(0,\int (\phi^\infty)^2\rt).$$
To identify the correct renormalization on the level of the displacement $Z^{R,L}$ we introduce as an approximation of white noise
$$W^{R,L}=R^{\frac{d}{2}}\lt(\mu^{R,L}-1\rt)$$
and let $u^{R,L}$ be the $Q_L$-periodic  solution to $\Delta u^{R,L}=W^{R,L}$ with $\int_{Q_L} u^{R,L}=0.$ We put $\nabla u_1^{R,L}(0)=\int \eta \nabla u^{R,L}$. We then have

\begin{theorem}\label{thm:intro2}
 For $d=2$, $p\geq 2,$ $\gamma> d\lt(1-\frac1p\rt),$ and any sequences $R,L \to \infty$ the distribution $ Z^{R,L}-\mu^{R,L}\nabla u^{R,L}_1(0)$ converges in law in $W^{-\gamma,p}_{loc}$ to $\nabla \Psi-\nabla\Psi_1(0).$ Moreover, for every fixed $\ell\ge 1$, it has bounded moments of arbitrary order in $W^{-\gamma,p}(B_{\ell})$. 
\end{theorem}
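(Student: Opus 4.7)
The strategy mirrors the proof of Theorem \ref{thm:intro1}, the novelty being that in dimension two neither $Z^{R,L}$ nor $\nabla u^{R,L}$ converges as $L\to\infty$, so every estimate must respect the renormalization. Viewing every object as a vector-valued distribution on $Q_L$, I would begin from the identity
\[
Z^{R,L}-\mu^{R,L}\nabla u_1^{R,L}(0)=\underbrace{[Z^{R,L}-\nabla u^{R,L}]}_{=:E^{R,L}}+\underbrace{[\nabla u^{R,L}-\nabla u_1^{R,L}(0)]}_{=:L^{R,L}}-R^{-d/2}\nabla u_1^{R,L}(0)\,W^{R,L},
\]
and aim to show that $E^{R,L}$ and the last piece vanish in $W^{-\gamma,p}(B_\ell)$ with bounded moments of every order, while $L^{R,L}$ converges in law to $\nabla\Psi-\nabla\Psi_1(0)$.

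For the linearization error $E^{R,L}$, the key input is the deterministic quantitative estimate from \cite{GHO}, which at every mesoscopic scale controls the $L^2$-distance between $R^{d/2}(y-T(y))$ and the gradient of a suitably localized Poisson solution. For a test function $f\in C_c^\infty(B_\ell,\R^d)$ one splits
\[
E^{R,L}(f)=R^{d/2}\sum_i\int_{A_i}(y-X_i)\cdot(f(X_i)-f(y))\,dy+\int_{Q_L}f(y)\cdot\bigl(R^{d/2}(y-T(y))-\nabla u^{R,L}(y)\bigr)\,dy;
\]
the first sum is a Taylor remainder controlled by the smoothness of $f$ and moment bounds on the displacement, while the second is estimated directly via \cite{GHO}. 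A Sobolev duality argument then turns these pointwise bounds into $\EE\|E^{R,L}\|_{W^{-\gamma,p}(B_\ell)}^p\to 0$; this is the only step in which the restriction $d\leq 3$ is used.

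For the renormalized linearized piece $L^{R,L}$ the argument is essentially Gaussian: by the CLT for sums of i.i.d.\ random variables $W^{R,L}$ converges in law to white noise $W$ in suitable negative Sobolev spaces, and the linear operator sending $W$ to $\nabla u-\int\eta\,\nabla u$ is continuous into $W^{-\gamma,p}_{loc}$, the subtraction of the constant $\int\eta\,\nabla u$ absorbing the logarithmic divergence of the two-dimensional Poisson kernel. Tightness and uniform moment bounds for $L^{R,L}$ follow from direct covariance/Riesz-potential estimates. The remaining correction $R^{-d/2}\nabla u_1^{R,L}(0)\,W^{R,L}$ is treated as an explicit product: $\nabla u_1^{R,L}(0)$ is a Gaussian vector whose variance grows at most logarithmically in $L$, the $W^{-\gamma,p}(B_\ell)$-norm of $W^{R,L}$ is bounded in $R,L$, and the prefactor $R^{-d/2}$ forces the product to vanish.

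The principal obstacle I anticipate is the linearization step: one must convert the \cite{GHO} estimates---originally stated in an $L^2$-averaged form over Laguerre cells---into a localized estimate in $W^{-\gamma,p}(B_\ell)$ which is uniform as $L\to\infty$, and ensure that the comparison with $\nabla u^{R,L}$ used there is compatible with the chosen renormalization. Once this is settled, Slutsky's lemma in $W^{-\gamma,p}_{loc}$ combines the three pieces into the claimed convergence in law of $Z^{R,L}-\mu^{R,L}\nabla u_1^{R,L}(0)$ to $\nabla\Psi-\nabla\Psi_1(0)$, and the moment bound is obtained by adding the moment bounds on each summand.
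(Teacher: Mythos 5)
Your algebra is consistent: writing $\mu^{R,L}=1+R^{-d/2}W^{R,L}$, the decomposition
\[
Z^{R,L}-\mu^{R,L}\nabla u_1^{R,L}(0)
=\underbrace{\bigl[Z^{R,L}-\nabla u^{R,L}\bigr]}_{E^{R,L}}
+\bigl[\nabla u^{R,L}-\nabla u_1^{R,L}(0)\bigr]
-R^{-d/2}\nabla u_1^{R,L}(0)\,W^{R,L}
\]
is correct, and the middle piece is indeed handled by the paper's linear theory (Theorem~\ref{theo:convlinearbody}). But there is a genuine gap in treating the first and third pieces separately. Neither $E^{R,L}$ nor $R^{-d/2}\nabla u_1^{R,L}(0)\,W^{R,L}=(\mu^{R,L}-1)\nabla u_1^{R,L}(0)$ vanishes individually in $W^{-\gamma,p}(B_\ell)$ for \emph{arbitrary} sequences $R,L\to\infty$ in $d=2$; both carry a $\sqrt{\log L}$ divergence coming from $\nabla u_1^{R,L}(0)$ (whose variance is $\sim\log L$ by Lemma~\ref{lem:nablauCLT}). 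Your argument that the $W^{-\gamma,p}$-norm of $W^{R,L}$ is bounded and that the prefactor $R^{-d/2}$ makes the product vanish ignores the $\sqrt{\log L}$ factor; one only obtains something like $R^{-d/2}\sqrt{\log L}$, which does not tend to zero if $R$ grows slowly relative to $L$ (e.g.\ $R\sim\log\log L$). The paper explicitly flags this in Remark~\ref{rem:hom}: the simpler splitting you propose works only under a condition of the type $R\gg\log^{1/(2\gamma)}L$, which is \emph{not} imposed by the theorem.

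The paper avoids this by never separating $E^{R,L}$ from the cross term. In Theorem~\ref{theo:quantall}, equation~\eqref{eq:quantalllog} estimates the combined quantity
\[
Z^{R,L}-\mu^{R,L}\nabla u_1^{R,L}(0)-\bigl(\nabla u^{R,L}-\nabla u_1^{R,L}(0)\bigr)
=E^{R,L}-(\mu^{R,L}-1)\nabla u_1^{R,L}(0)
\]
directly, obtaining the bound $\lesssim(\log(R\ell)/R)^p$, which is uniform in $L$. The point is that, on the scales $\eps<1/R$, both $Z^{R,L}$ and $\mu^{R,L}\nabla u_1^{R,L}(0)$ are anchored on the same Dirac masses with the same logarithmically divergent prefactor, so the $\sqrt{\log L}$ contributions cancel (this is exactly why the renormalization is $\mu^{R,L}\nabla u_1^{R,L}(0)$ rather than the constant $\nabla u_1^{R,L}(0)$). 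Concretely, the key comparison at small scales is $Z^{1,L}_r(0)-\mu_r^{1,L}(0)\nabla u_R^{1,L}(0)$, and the relevant $\log$ factor is $\log^{1/2}(R\ell)$ (see~\eqref{claim:quantalllogreduced}), not $\log^{1/2} L$. To repair your argument you would either have to impose a growth condition on $R$ relative to $L$ (weakening the theorem), or keep $E^{R,L}$ and the cross term together as the paper does.

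A secondary, smaller issue: your sketch for $E^{R,L}$ (Taylor remainder plus pointwise comparison with $\nabla u^{R,L}$) is not how the paper localizes the quantitative estimate from~\cite{GHO}. The paper's route goes through the characterization of $W^{-\gamma,p}(B_\ell)$ by convolutions (Theorem~\ref{theo:mainnegsob} and Lemma~\ref{lem:removesmallscale}), reducing everything to annealed pointwise moment bounds for $Z^{R,L}_\eps(x)$ at each scale $\eps$, combined with the stochastic control of the random radius $r_{*,L}$ (Theorem~\ref{thm:mainstoch}, Proposition~\ref{prop:quenched}, Proposition~\ref{theop}). Your outline is not wrong in spirit but it should be made precise along these lines; the characterization via convolutions is not optional, since it is what allows a clean separation of the mesoscopic and Dirac scales.
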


Regarding the behavior of the logarithmically diverging shift $\nabla u_1^{R,L}(0)$, using the Green function representation of $u^{R,L}$ together with the quantitative CLT in  \cite{Bo20} we show in Lemma \ref{lem:nablauCLT} that 
 setting $\sigma^2 = \frac{1}{2} \EE[|\nabla u^{R,L}_1(0)|^2]$, we have  $\sigma^2\sim \log L$
and for any $p\geq 2$\footnote{We denote by $W_p$ the $p-$Wasserstein distance . The notation $A\ll 1$, which we only use in assumptions, means that there exists an $\eps>0$ only depending on the dimension, the fixed cutoff function $\eta$ and the parameters $p$ and $\gamma$, such that
if $A\leq \eps$ then the conclusion holds.
 Similarly, the notation $A\lesssim B$, which we use in output statements, means that there exists a global
constant $C>0$ depending on the dimension, the fixed cutoff function $\eta$ and the parameters $p$ and $\gamma$ such that $A\le C B$.}
$$W_p\lt(\mathsf{Law}\lt(\nabla u^{R,L}_1(0)\rt),\mathcal N\lt(0,\sigma^2 \mathsf{Id}\rt)\rt) \les \frac{1}{R \log^{\frac{1}{p}} L}.$$
Notice that  if we define $\nabla \Psi^L$ as the $Q_L-$periodic analog of the curl-free Gaussian free field $\nabla \Psi$, this may be equivalently written as 
\[
 W_p\lt(\mathsf{Law}\lt(\nabla u^{R,L}_1(0)\rt),\mathsf{Law}\lt(\nabla \Psi^{L}_1(0)\rt)\rt) \les \frac{1}{R \log^{\frac{1}{p}} L}.
\]

The proof of Theorems \ref{thm:intro1} and \ref{thm:intro2} will be done in two steps. In Section \ref{sec:linear}, we show the linear part, the distributional convergence of $\nabla u^{R,L}$ in $W^{-\gamma,p}_{loc}$ to $\nabla \Psi$ (with the appropriate renormalization when $d=2$). Let us point out that this result holds in any dimension and could be of independent interest. The main ingredient is to show tightness which follows from careful moment estimates together with an estimate of the negative Sobolev norm  through convolutions in Theorem \ref{theo:mainnegsob} which seems to be new. Then, we only have to identify the limit wich follows by mimicking the proof of the CLT.

The second step consists of two parts, carried out in Section \ref{sec:deter} and \ref{sec:sto}. First we need to adapt the results of \cite{GHO} to the present setup. This yields (see Theorem \ref{theo:main}) a quantitative deterministic estimate of the local average of the displacement under an optimal coupling between $\mu^{R,L}$ and $\mathsf{Leb}$ to the local average of the gradient of the solution to the Poisson equation 
$\Delta u^{R,L}=\mu^{R,L}-1.$
Secondly, we show that one can use the results of \cite{AmStTr16} and \cite{GolTre} to show that the assumptions for Theorem \ref{theo:main} are satisfied for a random radius $r_{*,L}$ with stretched exponential moments independent of $L$, see Theorem \ref{thm:mainstoch}. Combining these two parts, writing $\eta_r(x)=\frac{1}{r^d}\eta\lt(\frac{x}{r}\rt)$, we obtain for $1\leq r \ll L$, $p\geq 1$ the annealed estimate (see Proposition \ref{theop}) between the displacement under the optimal coupling and the solution to the linearized problem $\nabla u^{1,L}_r(0)=\int\eta_r(x)\nabla u^{1,L}(x)$
$$\EE\lt[\lt|\int \eta_r (x) (y-x- \nabla u^{1,L}_r(0)) d\pi^{1,L}\rt|^p\rt]^{\frac{1}{p}}\les \frac{\beta(r)}{r}, $$
where $\beta(r)=1$ in $d\geq 3$ and $\beta(r)=\log(1+r)$ in $d=2$. We remark that this estimate is a quantitative justification of the linearization ansatz of Caracciolo et al.\ in \cite{CaLuPaSi14} from a macroscopic scale down to a microscopic scale (see Proposition \ref{prop:quenched} for a quenched version).

Finally, we can combine these two steps to prove Theorem \ref{thm:intro1} and  Theorem\ref{thm:intro2}. Additionally, we prove a variant of these two theorems where we can relax the condition on $\gamma$ to $\gamma>\frac{d}{2}-1$ which is the natural condition for $\nabla \Psi$, cf.\ Remark \ref{rem:sobwhiteDir}. However, in order to achieve this we have to ignore or average out the microscopic scales at which $\mu^{R,L}$ has no better regularity than Dirac measures. 
We refer to Theorem \ref{thm:maincvg} for the precise statement.

\begin{remark}\label{rem:macroscale}
 Another way to deal with the global non-existence of $\nabla\Psi$ in $d=2$  would be to keep $L$ fixed and let only $R$ tend to $\infty$. Upon changing notation and adapting the definition of $\nabla\Psi$ to live on the torus $Q_L$ our estimates in Sections \ref{sec:linear} and \ref{sec:conv} allow to deduce such a result for $d=2,3$. We leave the details to the reader.

 However, in dimension one for $L$ being fixed,  based on the explicit form of the optimal coupling one can directly argue the convergence of the properly rescaled displacements to a Brownian bridge, see e.g.\ \cite{CaSi14, dBGiUt05}. The key observation is that the solution to the optimal matching problem on $[0,1)$ with $n$ iid uniform points $X_1,\ldots, X_n$ maps the interval $[\frac{i-1}{n},\frac{i}{n})$ to the i-th point in the \emph{ordered} tuple $\tilde X_1\leq \tilde X_2\leq\ldots\leq\tilde X_n$.  The random variable $\tilde X_i$ follows a  $\mathsf{Beta}(i,n-i+1)$ distribution from which one can derive convergence of the rescaled displacements $\sqrt{n}\bra{\tilde X_i-\frac{i}{n}}$ to a Brownian bridge.
\end{remark}

\begin{remark}
 Essentially identical results to Theorem \ref{thm:intro1} and Theorem \ref{thm:intro2} hold if we replace $\mu^{R,L}$ by a Poisson point process of intensity $R^d$, i.e.\ we replace the deterministic number of iid points  by a random number $N$ of iid uniformly distributed points in $Q_L$ where $N$ is Poisson distributed with parameter $(RL)^d$. On a technical side, this requires to add several additional estimates to account for the fluctuation of $N$ around its mean $(RL)^d$ which can be dealt with using Chernoff bounds similar to, e.g.,  \cite{GolTre}.
\end{remark}

\section{Preliminaries}

Throughout the paper we will use the following notation.
We write $Q_L=\lt[-\frac{L}{2},\frac{L}{2}\rt)^d$ and denote the ball or radius $\ell$ by $B_\ell$. The indicator function of a set $A$ will be denoted by $\chi_A$.
We fix a radially symmetric $\eta \in C^\infty_c(B_1)$ and for $\eps>0$ we write $\eta_\eps(x)=\frac{1}{\eps^d}\eta\lt(\frac{x}{\eps}\rt).$ For convolution with $\eta_\eps$ we will use the shorthand notation $\eta_\eps * u=u_\eps$.
For a random variable $U$ we write $\mathsf{Law}(U)$ for its distribution. If $\Sigma$ is a symmetric and positive definite matrix we write $\mathcal{N}(0,\Sigma)$ for the centered Gaussian distribution with covariance matrix $\Sigma$. For two measure $\mu,\nu$ on $\R^d$ of the same finite mass we denote the set of all couplings between $\mu$ and $\nu$ by $\mathsf{Cpl}(\mu,\nu)$ and the $L^p$ Wasserstein distance by
$$ W_p(\mu,\nu)=\inf_{\pi\in\mathsf{Cpl}(\mu,\nu)}\lt(\int |x-y|^p d\pi\rt)^\frac1p.$$
The $L^2$ Wasserstein distance on the torus will be denoted by $W_{per}$.
For a set $B\subset \R^d$ we write 
\begin{equation}\label{notationWkappa}
W_B(\mu,\kappa)=W_2\lt(\mu\restr B,\frac{\mu(B)}{\mathsf{Leb}(B)}\mathsf{Leb}\rt),
\end{equation}
where $\mathsf{Leb}$ denotes the Lebesgue measure. We usually write $dx$ for integration with respect to $\mathsf{Leb}.$

The notation $A\ll 1$, which we only use in assumptions, means that there exists an $\eps>0$ only depending on the dimension, the fixed cutoff function $\eta$ and the parameters $p$ and $\gamma$, such that
if $A\leq \eps$ then the conclusion holds.
 Similarly, the notation $A\lesssim B$, which we use in output statements, means that there exists a global
constant $C>0$ depending on the dimension, the fixed cutoff function $\eta$ and the parameters $p$ and $\gamma$ such that $A\le C B$. We write $A\sim B$ if $A\les B\les A$.

\subsection{Curl-free GFF}\label{sec:GFGF}
 For $L,R>0$, we let $\mu^{R,L}$ be a normalized Binomial point process of intensity $R^d$, i.e.
\begin{equation}\label{def:muRL}
 \mu^{R,L}=\frac{1}{R^d}\sum_{i=1}^{(RL)^d} \delta_{X_i},
\end{equation}
where $X_i$ are iid random variables uniformly distributed on $Q_L$. In particular, here and in the rest of the article we always assume that $(RL)^d\in \N$. We often identify $\mu^{R,L}$ and its $Q_L$ periodic extension to $\R^d$.
We  define the $Q_L-$periodic measure 
\begin{equation}\label{def:WRL}
 W^{R,L}=R^{\frac{d}{2}}\lt( \mu^{R,L}-  1\rt).
\end{equation}
We consider $u^{R,L}$ the $Q_L-$periodic solution with average zero of
\begin{equation}\label{def:uRL}
 \Delta u^{R,L}=W^{R,L}.
\end{equation}
Notice that formally we have the rescaling\footnote{Here and in the rest of the article we use the obvious coupling of the stochastic quantities. We will do so without explicitly mentioning this anymore.}
\begin{equation}\label{rescalingnabu}
  W^{R,L}(x)= R^{-\frac{d}{2}} W^{1, LR}(Rx) \qquad \textrm{and} \qquad \nabla u^{R, L}(x)=R^{\frac{d}{2}-1} \nabla u^{1,L R}(Rx).
\end{equation}
Moreover, due to periodicity quantities like $\mu^{R,L}_\eps(x), \nabla u^{R,L}_\eps(x), W^{R,L}_\eps(x)$ will all be stationary.
 If  $W$ is the white noise on $\R^d$ we formally consider the solution $\Psi$ to
\[
 \Delta \Psi= W \qquad \textrm{ in } \R^d.
\]
The curl-free Gaussian free field (or curl-free GFF) is then, still formally, the random distribution $\nabla \Psi$. More precisely if $d\ge 3$, $\nabla \Psi$
is a random distribution such that for every $f\in C^{\infty}_c(\R^d,\R^d)$,
\begin{equation}\label{defnablapsi}
 \mathsf{Law}(\nabla\Psi(f))= \mathcal{N}\lt(0, \int (\phi^{\infty})^2\rt)
\end{equation}
where $\phi^{\infty}$ is the unique $L^2(\R^d)$ solution to
\begin{equation}\label{defphi}
 -\Delta \phi^\infty=\nabla\cdot f \qquad \textrm{ in } \R^d.
\end{equation}
When $d=2$, in general solutions of \eqref{defphi} are not in $L^2$ unless $\int f=0$ and thus some renormalization is needed. 
Let $\eta$ be our usual smooth cut-off function with $\int\eta=1$. We define $\nabla \Psi-\nabla \Psi_1(0)$ as follows: for every $f\in C^\infty_c(\R^d;\R^d)$ let $\phi^\infty$ be the unique $L^2(\R^2)$ solution of 
\begin{equation}\label{defphi2d}
 -\Delta \phi^\infty =\nabla\cdot \lt(f -\eta \int f\rt) \qquad \textrm{in } \R^d,
\end{equation}
which exists since $\int (f -\eta \int f)=0$. We then require 
\[
 \mathsf{Law}\left((\nabla \Psi-\nabla \Psi_1(0))(f)\right)=\mathcal{N}\lt(0,\int (\phi^\infty)^2\rt).
\]
Notice that at least formally, this notation is consistent with our convention $\nabla \Psi_1(0)=\int \eta \nabla \Psi$.

\begin{remark}
 Let us point out that the existence of the curl-free GFF may be seen as a consequence of our convergence result Theorem \ref{theo:convlinearbody} below. 
\end{remark}

\begin{remark}\label{rem:GFF}
  If $W_{i,j}$ are independent white noises and $W_i=(W_{i,1},\cdots, W_{i,d})$, we can define $h=(h_1,\cdots, h_d)$ the vector GFF  as the solution of (see for instance \cite{AKMbook,Nathanael})
  \[
 \Delta  h_i= \nabla \cdot W_i \qquad \forall i=1,\cdots d.
  \]
  It is then not hard to see that $\Delta \Psi=\nabla \cdot h$ and thus in terms of Hodge decompositions, $\nabla \Psi$ corresponds to the curl-free part of $h$ (see also \cite[Th. 1.2.5]{Aru}). 
\end{remark}

We close this section with two estimates which will be used in Section \ref{sec:conv}.

\begin{lemma}\label{lem:boundmu}
 For every $p\ge 2$, $L\gg r>0$ and $x\in Q_L$,
 \begin{equation}\label{eq:momentmu}
 \EE\lt[ |W_r^{1,L}(x)|^p\rt]^{\frac{1}{p}}\les 
 \frac{1}{r^{\frac{d}{2}}}\lt(1+\frac{1}{r^{\frac{d(p-2)}{2p}}}\rt).  
\end{equation}

\end{lemma}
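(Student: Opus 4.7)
The plan is to expand $W_r^{1,L}(x)$ as a sum of iid mean-zero random variables and apply a Rosenthal-type moment inequality.

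First, I would write $W_r^{1,L}(x) = \eta_r * W^{1,L}(x)$, where the convolution is taken in the $Q_L$-periodic sense. Since $L \gg r$ and $\eta_r$ is supported in $B_r$, we may identify $\eta_r$ with its periodic extension and write
\[
 W_r^{1,L}(x) = \sum_{i=1}^{L^d} \bigl(\eta_r(x-X_i) - \EE[\eta_r(x-X_i)]\bigr) =: \sum_{i=1}^{L^d} Y_i,
\]
where $X_i$ are iid uniform on $Q_L$ and thus the $Y_i$ are iid, centered, and bounded. The relevant moments of a single summand are obtained by direct computation using uniformity: for $q \ge 1$,
\[
 \EE[|\eta_r(x-X_i)|^q] \;=\; L^{-d}\int_{Q_L}|\eta_r(x-y)|^q\,dy \;\lesssim\; L^{-d} r^{-d(q-1)},
\]
which after centering gives $\EE[|Y_i|^2] \lesssim L^{-d} r^{-d}$ and $\EE[|Y_i|^p] \lesssim L^{-d} r^{-d(p-1)}$.

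Next, I would apply Rosenthal's inequality to the sum of $L^d$ iid centered random variables:
\[
 \EE\bigl[|W_r^{1,L}(x)|^p\bigr]^{\tfrac1p} \;\lesssim\; \bigl(L^d\,\EE[|Y_i|^2]\bigr)^{1/2} + \bigl(L^d\,\EE[|Y_i|^p]\bigr)^{1/p}.
\]
Inserting the two moment estimates above, the first term contributes $r^{-d/2}$ and the second contributes $r^{-d(p-1)/p}$. Since
\[
 \tfrac{d}{2} + \tfrac{d(p-2)}{2p} \;=\; \tfrac{d(p-1)}{p},
\]
the two terms combine exactly as $r^{-d/2}(1 + r^{-d(p-2)/(2p)})$, which is the claimed bound.

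The calculation is essentially routine; the only point requiring some care is the handling of the periodic convolution in the regime $L \gg r$, which ensures that the periodic extension of $\eta_r$ coincides with $\eta_r$ itself on a single period and that the compensating constant in $W^{1,L}$ matches the mean of each summand. The Rosenthal inequality (or equivalently a Marcinkiewicz--Zygmund / BDG argument) automatically interpolates between the Gaussian-type scaling $r^{-d/2}$, which governs the regime $r \gtrsim 1$, and the large-deviation scaling from a single atom, which dominates for $r \ll 1$.
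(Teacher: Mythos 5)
Your proposal is correct and follows essentially the same route as the paper: write $W_r^{1,L}(x)$ as a sum of $L^d$ iid centered summands $Y_i=\eta_r(x-X_i)-\EE[\eta_r(x-X_i)]$, estimate $\EE[|Y_i|^2]$ and $\EE[|Y_i|^p]$ directly from uniformity of $X_i$, and apply Rosenthal's inequality. The paper merely reduces to $x=0$ by stationarity first, but the computation is otherwise identical.
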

\begin{proof}
By stationarity, it is enough to prove \eqref{eq:momentmu} for $x=0$. If  $X_i$ are iid random variables uniformly distributed in $Q_L$,
\[
 W_r^{1,L}(0)=\sum_{i=1}^{L^d} \lt( \eta_r(X_i) -\frac{1}{|Q_L|}\rt).
\]
Letting $Y_i= \eta_r(X_i) -\frac{1}{|Q_L|}$ and using Rosenthal's inequality \cite[Th. 3]{Rosenthal} for sums of centered iid random variables (recall $\int \eta=1$), we obtain
\[\EE\lt[ |W_r^{1,L}(x)|^p\rt]^{\frac{1}{p}}\les  \lt(L^d\EE\lt[\lt|Y_i\rt|^2\rt]\rt)^{\frac{1}{2}}+ \lt( L^d \EE\lt[\lt|Y_i\rt|^p\rt]\rt)^{\frac{1}{p}}
\]
Now by definition, we have for every $p\ge 1$
\[
 L^d\EE\lt[\lt|Y_i\rt|^p\rt]=\int_{Q_L} \lt|\eta_r -\frac{1}{|Q_L|}\rt|^p\les \frac{1}{r^{d(p-1)}} +\frac{1}{L^{d(p-1)}}\les \frac{1}{r^{d(p-1)}},
\]
which concludes the proof.
\end{proof}

\subsection{Negative Sobolev spaces}\label{sec:negSob}
Fix $p\ge 1$ and  $\gamma>0$ with $\gamma= k+s$, $k\in \N$ and $s\in(0,1)$. Let us stress the fact that we only consider here the case $\gamma\notin \N$ since this will be enough for our purpose thanks to Sobolev embedding, see Remark \ref{rem:Sob}. The case $\gamma\in \N$ is special and would require to be treated a bit differently, see \cite[Prop. D.1]{AKMbook}. For $u:\R^d\to \R^n$, we define the homogeneous $W^{\gamma,p}$ semi-norm of $u$ by
\[
[u]_{W^{\gamma,p}}^p=\int_{\R^d\times \R^d} \frac{|\nabla^k u(x)-\nabla^k u(y)|^p}{|x-y|^{d+sp}} 
\]
  and then the $W^{\gamma,p}$ norm  as 
\[
 \|u\|_{W^{\gamma,p}}^p=\sum_{i=0}^k \int |\nabla^i u|^p + [u]_{W^{\gamma,p}}^p.
\]
For every $\ell>0$, we define by duality (here $p'$ is the H\"older conjugate of $p$ i.e. $\frac{1}{p}+\frac{1}{p'}=1$),
\begin{equation}\label{def:negSob}
 \|u\|_{W^{-\gamma,p}(B_\ell)}=\sup\lt\{ \int u v \ : v\in C^{\infty}_c(B_\ell), \, \|v\|_{W^{\gamma,p'}}\le 1 \rt\}.
\end{equation}
We say that a sequence of random distributions $(u_n)_{n\geq 1}$ converges in law in $W^{-\gamma,p}_{\loc}$ to a limit distribution $u$ if for every $\ell>0$ and every $F\in C^0_b(W^{-\gamma,p}(B_\ell),\R)$
\[
 \lim_{n\to \infty} \EE[F(u_n)]=\EE[F(u)].
\]
For fixed $p, \gamma$ and $\ell$, we say that the sequence $(u_n)_{n\geq 1}$ has bounded moments of arbitrary order in $W^{-\gamma,p}(B_\ell)$ if for every $q\ge 1$,
\[
 \sup_n \EE[\|u_n\|^q_{W^{-\gamma,p}(B_\ell)}]<\infty.
\]
\begin{remark}\label{rem:Sob}
 Notice that by Sobolev embedding we have for $\gamma'<\gamma$ and $q\ge p$
 \[
  \|u\|_{W^{-\gamma,p}(B_\ell)}\le C  \|u\|_{W^{-\gamma',q}(B_\ell)}
 \]
for a constant $C>0$ depending on $\gamma, \gamma', p, q, \ell$. Therefore, in order to control arbitrary large $q-$moments in $W^{-\gamma,p}(B_\ell)$ norms,
it is enough to bound $\EE[\|u\|_{W^{-\gamma,p}(B_\ell)}^p]$ for every $p\ge 1$.
\end{remark}
We now give a characterization of negative Sobolev norms in terms of averages. Similar characterizations are widely used in the literature (see the book \cite{Triebel} and  an example of applications in SPDEs \cite{OtWe}) but we could not find exactly what we needed. In particular, classically the regularization is through a semi-group or a kernel having enough cancellations (see \eqref{condrho} below). Here we use instead a standard (positive) convolution kernel. This is due to the fact that we are only interested in negative Sobolev norms and not in a unified characterization for both positive and negative regularity exponents. Another difficulty is that we need a characterization of local Sobolev spaces where the situation is more subtle than in the whole space case.  
We follow the general strategy of \cite[Prop. D5]{AKMbook}. Our starting point is the following variant of \cite[Prop. D3]{AKMbook}:
\begin{lemma}\label{lem1.1}
 Let $\gamma>0$ with $\gamma=k+s$ with $k\in \N$ and $s\in(0,1)$. Then, if $\rho\in C^\infty_c(\R^d)$ is rotationally invariant with 
 \begin{equation}\label{condrho}
  \int x_i^l \rho(x) dx=0 \qquad \forall i\in[1,d] \ \textrm{ and } \ l\in[0,k],
 \end{equation}
then\footnote{ Here with a slight abuse of notation, $\rho_\eps=\eps^{-d}\rho(\cdot/\eps)$} 
\begin{equation}\label{primal}
 \int_0^\infty \eps^{-\gamma p} \int_{\R^d} |u\ast \rho_\eps|^p \frac{d\eps}{\eps}\les [u]_{W^{\gamma,p}}^p.
\end{equation}

\end{lemma}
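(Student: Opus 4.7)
My plan is to combine the moment cancellation of $\rho$ with a Taylor expansion of $u$ to produce a pointwise bound on $u\ast\rho_\eps$ in terms of order-$k$ oscillations of $\nabla^k u$, and then recover the Gagliardo seminorm by Fubini. The first preliminary step is to upgrade the hypothesis: rotational invariance of $\rho$ together with $\int x_i^l\rho=0$ for $l\in[0,k]$ actually implies $\int x^\alpha\rho=0$ for \emph{every} multi-index $\alpha$ with $|\alpha|\le k$. Writing $\rho(x)=g(|x|)$ and passing to polar coordinates yields
\[ \int_{\R^d} x^\alpha \rho(x)\,dx = \bra{\int_{S^{d-1}} \omega^\alpha\, d\omega}\bra{\int_0^\infty r^{d+|\alpha|-1} g(r)\, dr}, \]
so each monomial integral factors through the radial moment of order $|\alpha|$, which the pure-power hypothesis kills.

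Next I would Taylor expand $u$ about $x$ to order $k$ with integral remainder. The moments of $\rho_\eps$ of orders $0,\ldots,k-1$ vanish by the previous step, so the polynomial part of the expansion dies in the convolution $\int u(x-y)\rho_\eps(y)\,dy$, leaving only
\[ u\ast\rho_\eps(x)=\sum_{|\alpha|=k}\frac{k}{\alpha!}\int_{\R^d}\int_0^1 (1-\tau)^{k-1}(-y)^\alpha \partial^\alpha u(x-\tau y)\rho_\eps(y)\, d\tau\, dy. \]
Invoking moment vanishing at order $k$ one more time, I can legitimately replace $\partial^\alpha u(x-\tau y)$ by $\partial^\alpha u(x-\tau y)-\partial^\alpha u(x)$ inside the integrand. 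Combined with $\spt\rho_\eps\subset B_{C\eps}$ and $\|\rho_\eps\|_\infty\les\eps^{-d}$, this produces the pointwise bound
\[ |u\ast\rho_\eps(x)|\les \eps^{k-d}\int_{B_{C\eps}}\int_0^1 |\nabla^k u(x-\tau y)-\nabla^k u(x)|\, d\tau\, dy. \]

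Raising to the $p$-th power by Jensen on the average over $B_{C\eps}\times[0,1]$, multiplying by $\eps^{-\gamma p-1}$ and integrating in $\eps>0$ and $x\in\R^d$ reduces the matter to a clean calculation. Setting $I(h):=\int_{\R^d}|\nabla^k u(x-h)-\nabla^k u(x)|^p\,dx$ and performing the change of variables $h=\tau y$ and Fubini, the $\eps$-integral becomes
\[ \int_{|h|/\tau}^\infty \eps^{-sp-1-d}\,d\eps \sim \tau^{sp+d}|h|^{-sp-d}, \]
the $\tau$-integral reduces to the finite quantity $\int_0^1 \tau^{sp}\,d\tau$, and the remaining spatial integral $\int I(h)|h|^{-d-sp}\,dh$ is exactly the Gagliardo seminorm $[u]_{W^{\gamma,p}}^p$.

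The main technical care lies in the pointwise bound: the moment cancellation of $\rho$ must be used \emph{twice}, first to annihilate the Taylor polynomial of order $\le k-1$ and then again to turn the order-$k$ remainder into an oscillation of $\nabla^k u$. Once this is in place, the remaining Fubini/rescaling step is routine, with the two cancellations together ensuring integrability at $\eps\to 0$ precisely through the finiteness of $\int_0^1 \tau^{sp}\,d\tau$, which requires only $s>0$ (i.e. $\gamma\notin\N$), matching the standing assumption.
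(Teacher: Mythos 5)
Your proof is correct and takes essentially the same route as the paper's: Taylor expand $u$ to order $k$ with integral remainder, use the moment cancellation of $\rho$ twice (once to kill the degree-$\le k-1$ polynomial, once to convert the order-$k$ remainder into an oscillation $\nabla^k u(x-\tau y)-\nabla^k u(x)$), then apply Jensen, Fubini, and a change of variables to recover the Gagliardo seminorm $[u]_{W^{\gamma,p}}^p$. The only cosmetic difference is that you replace $\rho_\eps$ by the crude bound $\eps^{-d}\chi_{B_{C\eps}}$ before integrating in $\eps$, whereas the paper keeps $\rho_\eps$ as the Jensen weight and evaluates $\int_0^\infty\eps^{-p\gamma}\rho_\eps(x-y)\,\tfrac{d\eps}{\eps}\les|x-y|^{-d-p\gamma}$ directly by scaling; your explicit preliminary step that rotational invariance promotes the pure-power conditions \eqref{condrho} to vanishing of $\int x^\alpha\rho$ for all $|\alpha|\le k$ is a useful clarification of something the paper uses tacitly.
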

Note that necessarily the function $\rho$ in Lemma \ref{lem1.1} cannot have a  constant sign due to condition \eqref{condrho}.

\begin{proof}
 For a $j$ tensor $A$ and $\xi\in \R^d$, we denote $A(\xi^{\otimes j})= A(\xi,\cdots,\xi)$. By Taylor formula, for every $x\in \R^d$, 
  \begin{align*}
&  u\ast \rho_\eps(x)= \int_{\R^d} \rho_\eps(x-y)u(y)dy\\
  &=\int_{\R^d} \rho_\eps(x-y)\lt[u(x) + \sum_{j=0}^{k-1} \frac{1}{j!} \nabla^j u(x)((y-x)^{\otimes j})\rt.\\
  &\qquad \qquad\lt. +\int_0^1 \frac{1}{k!} \nabla^{k} u(x+t(y-x))((y-x)^{\otimes k})(1-t)^{k-1}dt\rt] dy\\
  &\stackrel{\eqref{condrho}}{=}\frac{1}{k!}\int_{\R^d} \rho_\eps(x-y)\int_0^1 [\nabla^{k} u(x +t(y-x))-\nabla^{k} u(x)]((y-x)^{\otimes k})(1-t)^{k-1} dt dy. 
  \end{align*}
Therefore, using Jensen's inequality we find 
\begin{align*}
 \int_{\R^d}|u\ast \rho_\eps|^p&\les \int_0^1 \int_{\R^d\times\R^d} |x-y|^{pk} \rho_\eps(x-y) |\nabla^{k} u(x +t(y-x))-\nabla^{k} u(x)|^p(1-t)^{(k-1)p}.
\end{align*}
Since $\rho$ is rotationally invariant writing $\bar\rho_1(t)=\rho(t\frac{x}{|x|})$ we have
\[
 \int_0^\infty \eps^{-p\gamma}\rho_\eps(x-y) \frac{d\eps}{\eps}= \frac{1}{|x-y|^{d+ p\gamma}} \int_0^\infty  t^{-d-p \gamma} \bar\rho_1(1/t) \frac{dt}{t}\les  \frac{1}{|x-y|^{d+ p\gamma}}.
\]
Thus, recalling that $\gamma=k+s$,
\begin{align*}
 \int_0^\infty \eps^{-p\gamma} \int_{\R^d} |u\ast \rho_\eps|^p \frac{d\eps}{\eps}
 &\les \int_0^1 (1-t)^{(k-1)p}\int_{\R^d\times\R^d}  \frac{|\nabla^{k} u(x +t(y-x))-\nabla^{k} u(x)|^p}{|x-y|^{d+ p(\gamma -k)}} \\
 &=\int_0^1 (1-t)^{(k-1)p}\int_{\R^d\times\R^d}  \frac{|\nabla^{k} u(x +t(y-x))-\nabla^{k} u(x)|^p}{|x-y|^{d+ ps}} \\
 &=\int_0^1 t^{d+p s}(1-t)^{(k-1)p}\int_{\R^d\times\R^d}  \frac{|\nabla^{k} u(x +t(y-x))-\nabla^{k} u(x)|^p}{|t(y-x)|^{d+ ps }} \\
 &\stackrel{z= t(y-x)}{=} \int_0^1 t^{p s}(1-t)^{(k-1)p}\int_{\R^d\times\R^d}  \frac{|\nabla^{k} u(x +z)-\nabla^{k} u(x)|^p}{|z|^{d+ ps}} \\
 &\les [u]_{W^{\gamma,p}}^p.
\end{align*}

\end{proof}
\begin{remark}
 In the case $p=2$, \eqref{primal} may be also obtained via Fourier transform.
\end{remark}

We may now prove the main result of this section. We recall that we fixed a radial cut-off function $\eta$ and that we use the notation $u_\eps=u\ast \eta_\eps$.
\begin{theorem}\label{theo:mainnegsob}
 For every $\gamma>0$ with $\gamma\notin \N$ and $p\ge 1$, if $\ell\gg1$ then  
 \begin{equation}\label{eq:main}
  \|u\|_{W^{-\gamma,p}(B_\ell)}^p\les  \int_0^1 \eps^{p\gamma} \int_{B_{2\ell}} |u_\eps|^p  \frac{d\eps}{\eps}.
 \end{equation}

\end{theorem}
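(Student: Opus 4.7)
The plan is by duality: by \eqref{def:negSob}, it is enough to show
\[
  \Big|\int uv\Big| \lesssim \Big(\int_0^1 \varepsilon^{p\gamma}\int_{B_{2\ell}}|u_\varepsilon|^p \frac{d\varepsilon}{\varepsilon}\Big)^{1/p}
\]
uniformly over $v\in C_c^\infty(B_\ell)$ with $\|v\|_{W^{\gamma,p'}}\leq 1$, where $p'$ is the H\"older conjugate of $p$. The strategy is to produce a Calder\'on-type reproducing formula for $v$ in which one convolution factor is $\eta_\varepsilon$, so that by radial symmetry of $\eta$ it transfers onto $u$ as $u_\varepsilon$ when we pair; the companion factor $\Xi$ should be an even Schwartz kernel with sufficiently many vanishing moments to invoke Lemma \ref{lem1.1} on $v$.

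To construct such a formula, I would set $\chi_\varepsilon := \eta_\varepsilon^{\ast(k+2)}$ with $k=\lfloor \gamma\rfloor$, noting that $\chi_\varepsilon\to\delta_0$ as $\varepsilon\to 0^+$. A Leibniz expansion of $\partial_\varepsilon^{k+1}(\eta_\varepsilon^{\ast(k+2)})$ forces (by pigeonhole, since $k+1 < k+2$) at least one of the $k+2$ convolution factors to remain undifferentiated; combining with the homogeneity $\partial_\varepsilon^{\alpha}\eta_\varepsilon = \varepsilon^{-\alpha}\rho_{\alpha,\varepsilon}$ for even Schwartz $\rho_\alpha$ whose Fourier transform vanishes to order $\alpha$ at the origin yields
\[
  \partial_\varepsilon^{k+1}(\eta_\varepsilon^{\ast(k+2)}) = \varepsilon^{-(k+1)}\, \eta_\varepsilon\ast\Xi_\varepsilon,
\]
where $\Xi$ is a fixed even compactly supported function whose Fourier transform vanishes to order $k+1$ at $\xi=0$. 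Integrating by parts $k+1$ times the expression $\tfrac{1}{k!}\int_0^1 \zeta(\varepsilon)\varepsilon^k\partial_\varepsilon^{k+1}\chi_\varepsilon\, d\varepsilon$ against a smooth cutoff $\zeta$ equal to $1$ on $[0,1/2]$ and $0$ on $[1,\infty)$, and convolving with $v$, produces a representation
\[
  v = R v + C\int_0^1 v \ast \eta_\varepsilon\ast\Xi_\varepsilon\, \frac{d\varepsilon}{\varepsilon},
\]
where $Rv$ is a finite sum of boundary contributions supported in $\varepsilon\in[1/2,1]$.

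Pairing with $u$, using $\int u\cdot(v\ast\eta_\varepsilon\ast\Xi_\varepsilon)=\int u_\varepsilon\cdot(v\ast\Xi_\varepsilon)$ by radial symmetry of $\eta$, and applying H\"older in the joint $(\varepsilon,x)$ variable with weight split $\varepsilon^\gamma\cdot\varepsilon^{-\gamma}$ yields
\[
  \Big|\int_0^1\!\int u_\varepsilon(v\ast\Xi_\varepsilon)\frac{d\varepsilon}{\varepsilon}\Big| \leq \Big(\int_0^1\varepsilon^{p\gamma}\!\int|u_\varepsilon|^p\frac{d\varepsilon}{\varepsilon}\Big)^{\!1/p}\Big(\int_0^1\varepsilon^{-p'\gamma}\!\int|v\ast\Xi_\varepsilon|^{p'}\frac{d\varepsilon}{\varepsilon}\Big)^{\!1/p'}.
\]
Since $\Xi$ has $k+1>\gamma$ vanishing moments (using $\gamma\notin\N$), Lemma \ref{lem1.1} bounds the second factor by $[v]_{W^{\gamma,p'}}\lesssim 1$. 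For the spatial localization, the hypothesis $\ell\gg 1$ ensures that both $\eta_\varepsilon$ and $\Xi_\varepsilon$ are supported in balls of radius $O(1)$ for $\varepsilon\leq 1$, so $v\ast\Xi_\varepsilon$ is supported in $B_{\ell+O(1)}\subset B_{2\ell}$, and the first H\"older factor only involves $|u_\varepsilon|^p$ on $B_{2\ell}$, matching the right-hand side. The remainder $\int u\cdot Rv$ reduces to an integral of the form $\int_{1/2}^1\int u_\varepsilon\cdot(v\ast\tilde\Phi_{\varepsilon})\,d\varepsilon$ with $\tilde\Phi_\varepsilon$ a fixed-scale Schwartz kernel (obtained by factoring a surviving $\eta_\varepsilon$ from each boundary integrand, which exists since at most $k$ of the $k+2$ factors are ever differentiated in these terms), and this is absorbed into the right-hand side because $\varepsilon^{p\gamma}\sim 1$ on $[1/2,1]$.

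The hard part is the construction of the iterated Calder\'on formula with precisely the right structure --- a surviving $\eta_\varepsilon$ factor (for the symmetry pairing with $u$) together with a companion kernel $\Xi$ of at least $\lceil\gamma\rceil$ cancellations (for Lemma \ref{lem1.1}). The choice $M=k+2$ of convolution copies of $\eta_\varepsilon$ (two more than $\lfloor\gamma\rfloor$) is what makes this possible: $k+1$ $\varepsilon$-derivatives act on at most $k+1$ of the $k+2$ factors, leaving at least one undifferentiated $\eta_\varepsilon$, while a Fourier-side analysis of the Leibniz expansion supplies the remaining $k+1$ cancellations in $\Xi$. A subtlety is that one must use a positive kernel $\eta$ together with the induced cancellation kernel $\Xi$ built out of derivative combinations of $\eta$, rather than a pure Littlewood--Paley kernel with cancellations, since the RHS of the theorem features only averages against the positive kernel $\eta_\varepsilon$.
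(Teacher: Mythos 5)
Your proposal is correct and follows essentially the same route as the paper's proof: iterate convolutions of the positive kernel $\eta_\eps$, use a Taylor/Calder\'on-type expansion in the scale variable $\eps$ so that the pigeonhole principle on the Leibniz rule leaves one undifferentiated factor $\eta_\eps$ to transfer onto $u$, identify the companion kernel (your $\Xi$, the paper's $\rho^k$) as having the vanishing moments needed for Lemma~\ref{lem1.1}, and close with H\"older in $(\eps,x)$. The only differences are cosmetic: the paper takes $k+1$ copies of $\eta_\eps$ and a Taylor expansion to order $k$ (so the integral remainder directly carries the kernel with the full set of cancellations), whereas you take $k+2$ copies and $k+1$ derivatives; and you absorb the fixed-scale boundary contributions via a smooth cutoff $\zeta$ on $[1/2,1]$ using $\eps^{p\gamma}\sim 1$ there, while the paper picks a single scale $\bar\eps\in(1/2,1)$ by the mean value theorem and estimates the boundary terms pointwise at $\bar\eps$.
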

\begin{proof}
 Let  $k=\lceil-\gamma\rceil$. Choose then $\bar \eps\in (1/2,1)$ such that (which exists by the mean value theorem) 
 \begin{equation}\label{goodT}
  \int_{B_{2\ell}} | u_{\bar \eps}|^p \les \int_0^1 \eps^{p\gamma} \int_{B_{2\ell}} | u_\eps|^p  \frac{d\eps}{\eps}.
 \end{equation}
For a fixed function $f\in C^\infty_c(B_\ell)$ we let 
\[
 F(\eps)=\int_{\R^d} u (\eta_\eps \ast_{k+1} \eta_\eps ) \ast f
\]
where $\eta_\eps \ast_{k+1} \eta_\eps =\eta_\eps\ast\cdots \ast \eta_\eps$ ($\eta_\eps$ convolved $k+1$ times with itself). Our aim is to estimate $F(0)$. For this we use Taylor expansion and write 
\begin{equation}\label{Psi0}
 F(0)=\sum_{j=0}^{k-1} \frac{(-\bar \eps)^j}{j!} \frac{d^j}{d\eps^j}F(\bar \eps) +\frac{(-1)^k}{(k-1)!}\int_0^{\bar \eps}  \eps^{k-1} \frac{d^k}{d\eps^k}F(\eps) d\eps.
\end{equation}
We claim that for every $j\in [0,k]$,
\[
 \eps^j \frac{\partial^j}{\partial \eps^j} (\eta_\eps \ast_{k+1} \eta_\eps )= \eta_\eps\ast \rho^j_\eps.
\]
For some kernel  $\rho_\eps^j(x)=\frac{1}{\eps^d}\rho_1^j(x/\eps)$ with $\rho_1^j$ satisfying  \eqref{condrho}  for $l \in [0,j]$ and such that $\spt \rho_1^j\subset B_k$.
Indeed, 
\[
 \eps^j \frac{\partial^j}{\partial \eps^j} (\eta_\eps \ast_{k+1} \eta_\eps )=\sum_{\sum_{i=1}^{k+1} \alpha_i=j} (\eps^{\alpha_1} \frac{\partial^{\alpha_1}}{\partial \eps^{\alpha_1}} \eta_\eps)
 \ast\cdots \ast (\eps^{\alpha_{k+1}} \frac{\partial^{\alpha_{k+1}}}{\partial \eps^{\alpha_{k+1}}} \eta_\eps).
\]
Since $\alpha_i$ are integers and $j<k+1$, for every $(\alpha_1,\cdots, \alpha_{k+1})$ there is always at least some $i$ such that $\alpha_i=0$. Therefore, if we can prove that for every $\alpha$, 
\[
 \eps^{\alpha} \frac{\partial^{\alpha}}{\partial \eps^{\alpha}} \eta_\eps=\tilde\rho_\eps^\alpha
\]
for some $\tilde\rho_1^\alpha$ satisfying \eqref{condrho} for $l \in [0,\alpha]$ and $\spt \tilde\rho_1^\alpha\subset B_1$ then the claim would follow either looking in Fourier space or using that $u_\eps\ast v_\eps= ((u\ast v)_\eps)_\eps$. The fact that $\spt \tilde\rho_\eps^\alpha\subset B_\eps $ is immediate since $\spt \eta_\eps\subset B_\eps$. Looking in Fourier space we have 
\[
 \widehat{\lt(\eps^{\alpha} \frac{\partial^{\alpha}}{\partial \eps^{\alpha}} \eta_\eps\rt)}(\xi)= \nabla^\alpha \hat{\eta}(\eps \xi)(\eps\xi, \cdots, \eps \xi)
\]
and the claim follows letting $\rho_1^\alpha=\lt(\nabla^\alpha \hat{\eta}(\xi)(\xi, \cdots, \xi) \rt)^{\vee}$ recalling that $\int f=\hat f(0)$. \\
We may now estimate the various terms in \eqref{Psi0}. For every $j\le k$, we have 
\begin{align*}
  \eps^j \frac{d^j}{d\eps^j}F(\eps)&=\int_{\R^d} u  \lt( \eps^j \frac{\partial^j}{\partial \eps^j} (\eta_\eps \ast_{k+1} \eta_\eps ) \ast f\rt) \\
  &=\int_{B_{2\ell}} u_\eps (f\ast \rho_\eps^j).
\end{align*}
In particular, for $j<k$,
\begin{multline}\label{firstpiece}
 \lt|\frac{d^j}{d\eps^j}F(\bar \eps)\rt|\les \| u_{\bar \eps}\|_{L^p(B_{2\ell})} \|f\ast \rho_{\bar \eps}^j\|_{L^{p'}(B_{2\ell})}\\
 \les \| u_{\bar \eps}\|_{L^p(B_{2\ell})} \|f\|_{L^{p'}(B_{2\ell})}\stackrel{\eqref{goodT}}{\les}\lt(\int_0^1 \eps^{p\gamma} \int_{B_{2\ell}} | u_\eps|^p  \frac{d\eps}{\eps}\rt)^{\frac{1}{p}}\|f\|_{L^{p'}}.
\end{multline}
We now estimate the last term as
\begin{align*}
\int_0^{\bar \eps}  \eps^{k-1} \frac{d^k}{d\eps^k}F(\eps) d\eps&= \int_0^{\bar \eps} \int_{B_{2\ell}} u (\eta_\eps\ast \rho_\eps^k \ast f) \frac{d\eps}{\eps}\\
&=\int_0^{\bar \eps} \int_{B_{2\ell}} u_\eps  \rho_\eps^k \ast f \frac{d\eps}{\eps}\\
&\le \int_0^{\bar \eps} \lt(\int_{B_{2\ell}} |u_\eps|^p\rt)^{\frac{1}{p}}\lt(\int_{B_{2\ell}} |f\ast \rho_\eps^k|^{p'}\rt)^{\frac{1}{p'}} \frac{d\eps}{\eps}\\
&\le \lt(\int_0^{\bar \eps}  \eps^{p\gamma} \int_{B_{2\ell}} |u_\eps|^p \frac{d\eps}{\eps}\rt)^{\frac{1}{p}} \lt(\int_0^{\bar \eps}  \eps^{-p'\gamma} \int_{B_{2\ell}} |f\ast \rho_\eps^k|^{p'} \frac{d\eps}{\eps}\rt)^{\frac{1}{p'}}
\\
&\stackrel{\eqref{primal}}{\les} \lt(\int_0^{1}  \eps^{p\gamma} \int_{B_{2\ell}} |u_\eps|^p \frac{d\eps}{\eps}\rt)^{\frac{1}{p}} [f]_{W^{\gamma,p'}}.
\end{align*}
Putting this together with \eqref{firstpiece} yields \eqref{eq:main}.
\end{proof}
\begin{remark}
 In the whole space case, \eqref{eq:main} may be either obtained  by a simple computation using Fourier transform when  $p=2$, 
 or a more involved one using Littlewood-Paley decompositions for general $p$ (see \cite{Triebel}). 
 However,  it is not clear to us how to obtain the  local version \eqref{eq:main} using computations in the spectral domain. 
\end{remark}
As a first illustration of the use of \eqref{eq:main} we may show:
\begin{remark}\label{rem:sobwhiteDir}
 On the one hand, a Dirac mass is in $W^{-\gamma,p}$ 
 if\footnote{It is actually also a necessary condition as one can directly see from definition \eqref{def:negSob}.} $\gamma>d(1-\frac{1}{p})$. On the other hand, 
 the white noise $W$ on $\R^d$ satisfies $\EE[\|W\|_{W^{-\gamma,p}(B_\ell)}^p]<\infty$ for every $\gamma> \frac{d}{2}$ and every $1\le p<\infty$.
 In particular,  convergence in law in $W^{-\gamma,p}$ of  $W^{R,L}$ (recall  \eqref{def:WRL})
 to white noise cannot hold under the mere assumption $\gamma> \frac{d}{2}$. The two possible solutions are  either to impose stronger conditions on $(p,\gamma)$ or to ignore 
 the microscopic scales at which $W^{R,L}$ has no better regularity than Dirac masses. The same observation applies when considering the convergence
 of $\nabla u^{R,L}$ (recall  \eqref{def:uRL}) to the curl-free GFF 
\end{remark}
\begin{proof}
 The computation for the Dirac mass is immediate and we leave it to the reader. As for the white noise $W$, by \eqref{eq:main} and  stationarity we have 
 \[
 \EE[\|W\|_{W^{-\gamma,p}(B_\ell)}^p]\les \int_0^1 \eps^{p\gamma}  \int_{B_{2\ell}} \EE[|W_\eps|^p] \frac{d\eps}{\eps}= |B_{2\ell}| \int_0^1 \eps^{p\gamma}\EE[|W_\eps(0)|^p] \frac{d\eps}{\eps}.
 \]
Now since $W_\eps(0)=W(\eta_\eps)=\mathcal{N}(0, \|\eta_\eps\|_{L^2}^2)$ and since Gaussian random variables satisfy a reverse H\"older inequality, we have 
\[
 \EE[|W_\eps(0)|^p]\les \EE[|W_\eps(0)|^2]^{\frac{p}{2}}\les \|\eta_\eps\|_{L^2}^p\les \frac{1}{\eps^{p\frac{d}{2}}}.
\]
Finally,
\[\EE[\|W\|_{W^{-\gamma,p}(B_\ell)}^p]\les |B_{2\ell}| \int_0^1 \eps^{p(\gamma-\frac{d}{2})} \frac{d\eps}{\eps},\]
which is indeed finite if $\gamma>\frac{d}{2}$.
\end{proof}
The previous remark motivates the following simple but useful lemma.
\begin{lemma}\label{lem:removesmallscale}
 For every $p\ge 1$, $\gamma>0$ with $\gamma\notin \N$, $\ell\gg 1$ and  $t\in (0,1)$,
\begin{equation}\label{eq:removesmallscale}
 \|u_t\|_{W^{-\gamma,p}(B_\ell)}^p\les  t^{p\gamma} \int_{B_{3\ell}} |u_t|^p + \int_t^1 \eps^{p\gamma} \int_{B_{3\ell}}  |u_\eps|^p \frac{d\eps}{\eps}.
\end{equation}
 
\end{lemma}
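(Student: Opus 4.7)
The plan is to apply Theorem \ref{theo:mainnegsob} directly to $u_t$ in place of $u$, which gives
\[
 \|u_t\|_{W^{-\gamma,p}(B_\ell)}^p \les \int_0^1 \eps^{p\gamma} \int_{B_{2\ell}} |(u_t)_\eps|^p \frac{d\eps}{\eps},
\]
and then to split the outer integral at the scale $\eps=t$. The only work is to replace each $(u_t)_\eps = u\ast \eta_t \ast \eta_\eps$ by a cheaper object in each regime, which is possible because $\eta$ is nonnegative with $\int\eta=1$, and because both $t$ and $\eps$ are bounded by $1\ll\ell$.

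For the low-frequency part $\eps\in(0,t)$, I would write $(u_t)_\eps=u_t\ast \eta_\eps$. Since $\eta_\eps\ge 0$ has unit mass and is supported in $B_\eps\subset B_1$, Jensen's inequality applied pointwise and Fubini yield
\[
 \int_{B_{2\ell}} |u_t\ast \eta_\eps|^p \le \int_{B_{2\ell}} \int |u_t(y)|^p \eta_\eps(x-y)\,dy\,dx \le \int_{B_{3\ell}} |u_t(y)|^p\,dy,
\]
using $B_{2\ell}+B_\eps\subset B_{3\ell}$ when $\ell\gg 1$. Plugging this in, and using $\gamma>0$ so that $\int_0^t \eps^{p\gamma}\frac{d\eps}{\eps}\les t^{p\gamma}$, one gets the first term $t^{p\gamma}\int_{B_{3\ell}}|u_t|^p$.

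For the high-frequency part $\eps\in(t,1)$, I would instead write $(u_t)_\eps = u_\eps \ast \eta_t$, and apply exactly the same Jensen/Fubini estimate (now with $\eta_t$ as the probability kernel and $u_\eps$ as the input), which gives
\[
 \int_{B_{2\ell}} |u_\eps\ast \eta_t|^p \le \int_{B_{3\ell}} |u_\eps|^p.
\]
Multiplying by $\eps^{p\gamma}/\eps$ and integrating from $t$ to $1$ produces the second term in \eqref{eq:removesmallscale}. Summing the two contributions concludes the proof.

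There is no real obstacle: the statement is a clean application of Theorem \ref{theo:mainnegsob}, and the only conceptual point is to notice that the symmetry $u_t\ast \eta_\eps = u_\eps \ast \eta_t$ allows one to put the cheap (positive, unit-mass) kernel on whichever scale is currently the \emph{larger} of $t$ and $\eps$, so that the remaining convolution factor is precisely $u_{\min(t,\eps)}$ in the resulting bound. The bookkeeping regarding the enlargement from $B_{2\ell}$ to $B_{3\ell}$ is the only thing to watch, and it is harmless under the assumption $\ell\gg 1$ together with $t,\eps\le 1$.
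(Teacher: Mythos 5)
Your argument is correct and is essentially the paper's proof: the authors also start from Theorem \ref{theo:mainnegsob} applied to $u_t$, split the $\eps$-integral at $\eps=t$, and in each regime write $u_t\ast\eta_\eps$ so that the unit-mass kernel is the one at the larger scale, bounding $\int_{B_{2\ell}}|v\ast\rho|^p\les\int_{B_{3\ell}}|v|^p$ (via a cutoff and Young's inequality, which is the same estimate your Jensen/Fubini step yields).
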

\begin{proof}
 By \eqref{eq:main},
 \[
 \|u_t\|_{W^{-\gamma,p}(B_\ell)}^p\les  \int_0^1 \eps^{p\gamma} \int_{B_{2\ell}} |u\ast\eta_t\ast\eta_\eps|^p  \frac{d\eps}{\eps}.
 \]
Observe that for every $v$ and every $\rho$ with $\spt \rho\subset B_1$ and $\int \rho=1$
\[
 \int_{B_{2\ell}} |v\ast\rho|^p\le \int_{\R^d} |(v\chi_{B_{3 \ell}})\ast \rho|^p\stackrel{\textrm{Young}}{\les} \int_{B_{3 \ell}} |v|^p.
\]
Using this observation with $v=u\ast\eta_t$, $\rho=\eta_\eps$ if $\eps\in(0,t)$ and $v=u\ast \eta_\eps$, $\rho=\eta_t$ if $\eps\in (t,1)$ concludes the proof of \eqref{eq:removesmallscale}.
\end{proof}

\section{Linear theory}\label{sec:linear}
The aim of this section is to prove the distributional convergence of $\nabla u^{R,L}$ (recall the definition \eqref{def:uRL}) to $\nabla \Psi$, the curl-free GFF. 
When $d=2$, this convergence holds only after subtracting the  logarithmically diverging  (random) constant  $\nabla u^{R,L}_1(0)$.
 The proof is  divided in three steps. The first is to obtain moment bounds for the single observables $\nabla u_\eps(x)$. 
With the help of Theorem \ref{theo:mainnegsob} and Lemma \ref{lem:removesmallscale}, these imply tightness in the appropriate negative Sobolev spaces which by Prokhorov yield convergence in law to some limit distribution. 
The last step is to identify this limit distribution as $\mathsf{Law}(\nabla \Psi)$.

\subsection{Moment bounds}\label{sec:momentbounds}

Recall the definition \eqref{def:uRL} of $u^{R,L}$. 

\begin{lemma}\label{lem:boundnablu}
 For every $p\ge 2$,  $L\gg r>0$ and $x\in Q_L$,
 \begin{equation}\label{eq:momentnabu}
  \EE\lt[ |\nabla u_r^{1,L}(x)|^p\rt]^{\frac{1}{p}}\les\lt( 1+ \frac{1}{r^{\frac{d(p-2)}{2p}}}\rt)\begin{cases}
                                                           \frac{1}{r^{\frac{1}{2}(d-2)}} & \textrm{ if } d\ge 3\\[8pt]
                                                           \log^{\frac{1}{2}}\lt(\frac{L}{r}\rt) &\textrm{ if } d=2.
                                                           \end{cases}
 \end{equation}
Moreover, if $d=2$, for every $p\ge 2$, $L\gg R \ge r>0$,  and $x,y \in Q_L$,
\begin{equation}\label{eq:momentnabulog}
\EE\lt[ |\nabla u_r^{1,L}(x)- \nabla u_R^{1,L}(y)|^p\rt]^{\frac{1}{p}}\les\lt( 1+ \frac{1}{r^{\frac{p-2}{p}}}\rt)\lt( \log^{\frac{1}{2}}\lt(\frac{R+|x-y|}{r}\rt)+1\rt).
\end{equation}

\end{lemma}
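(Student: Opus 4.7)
The plan rests on a Green function representation of $\nabla u^{1,L}$ as a (centered) sum of iid random functionals, followed by Rosenthal's inequality for sums of centered iid random variables (as already used in the proof of Lemma~\ref{lem:boundmu}). Let $G_L$ denote the mean-zero $Q_L$-periodic fundamental solution of $-\Delta G_L = \delta_0 - L^{-d}$, and set $K_r := \nabla G_L \ast \eta_r$. Since $W^{1,L} = \sum_{i=1}^{L^d} \delta_{X_i} - 1$ has zero average on $Q_L$ and $\int_{Q_L} K_r = 0$, the identity $\Delta u^{1,L} = W^{1,L}$ yields
\[
\nabla u_r^{1,L}(x) \;=\; \sum_{i=1}^{L^d} K_r(x - X_i),
\]
which is already a sum of iid centered random variables once we use stationarity to reduce to $x=0$. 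Applying Rosenthal's inequality and noting $L^d \EE[|K_r(X_i)|^q] = \int_{Q_L} |K_r|^q$,
\[
\EE\bigl[|\nabla u_r^{1,L}(0)|^p\bigr]^{1/p} \;\lesssim\; \Bigl(\int_{Q_L} |K_r|^2\Bigr)^{1/2} + \Bigl(\int_{Q_L} |K_r|^p\Bigr)^{1/p}.
\]

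To estimate the $L^q$ norms of $K_r$ for $q \in \{2,p\}$, I would use the pointwise bound $|K_r(z)| \lesssim \min(r^{-(d-1)}, |z|^{-(d-1)})$ valid for $|z| \ll L$, which follows from the short-scale match of $G_L$ with the Euclidean Green function and the regularization by $\eta_r$ at scale $r$. Integration in dyadic shells then gives, in dimension $d\ge 3$ and for $q\ge 2$,
\[
\Bigl(\int_{Q_L} |K_r|^q\Bigr)^{1/q} \;\lesssim\; r^{-(d-1)+d/q},
\]
because the integrand $s^{d-1-q(d-1)}$ is integrable at infinity. For $d=2$ and $q=2$ the integral becomes logarithmically divergent in $L/r$, while for $d=2$, $q=p>2$ it remains of order $r^{-(p-2)/p}$. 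Inserting these into the Rosenthal bound yields \eqref{eq:momentnabu} after checking the identity $-(d-1)+d/p = -(d-2)/2 - d(p-2)/(2p)$.

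For the difference estimate \eqref{eq:momentnabulog} in $d=2$, the same representation gives
\[
\nabla u_r^{1,L}(x) - \nabla u_R^{1,L}(y) \;=\; \sum_i \bigl[K_r(x-X_i) - K_R(y-X_i)\bigr],
\]
so Rosenthal reduces matters to controlling $\int_{Q_L} |K_r(w) - K_R(w+h)|^q dw$ with $h = y-x$. I would split the integration region according to the characteristic scale $\rho := R + |h|$. In the outer region $|w| \gg \rho$, the cancellations in $K_r - K_R = \nabla G_L \ast (\eta_r - \eta_R)$ (using $\int(\eta_r-\eta_R) = 0$ and vanishing of the first moments by radial symmetry of $\eta$) together with a Taylor expansion in $h$ produce the decay $|K_r(w) - K_R(w+h)| \lesssim \rho/|w|^d$, whose $L^q$ tail contributes only a bounded amount in $d=2$. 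In the inner region $|w| \lesssim \rho$ (and the analogous region around $w = -h$) the pointwise bounds on $K_r, K_R$ yield, for $q=2$, a contribution of order $\log(\rho/r)$ coming from the annulus $r \le |w| \le \rho$, and for general $p\ge 2$ an additional $r^{-(p-2)/p}$ from $|w|\lesssim r$. Combining all pieces gives \eqref{eq:momentnabulog}. The main technical point — and the only nontrivial input beyond the argument of Lemma~\ref{lem:boundmu} — is the cancellation estimate in the outer region, which crucially uses the radial symmetry of $\eta$.
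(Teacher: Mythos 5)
Your proof takes essentially the same route as the paper: Green function representation with $K_r=\nabla G_L\ast\eta_r$, stationarity to reduce to $x=0$, Rosenthal's inequality, and the pointwise bound $|K_r(z)|\lesssim\min(r^{-(d-1)},|z|^{-(d-1)})$ (which the paper obtains by splitting $\nabla G^L$ into $\chi(\cdot/L)\nabla G_{\R^d}$ plus a small smooth periodic remainder, and then treating the near and far regions of the integral directly rather than via a dyadic decomposition of the pointwise bound). The difference estimate \eqref{eq:momentnabulog} also matches the paper's argument: the key cancellation $\int(\eta_r(\cdot-x)-\eta_R)=0$ used with a monopole/Taylor expansion of $\nabla G_{\R^d}$ in the outer region $|w|\gg R+|x-y|$, plus the pointwise bounds in the inner region.
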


\begin{proof}
 We first recall some properties of the Green function of the Laplacian in the periodic domain $Q_L$. Let $G^L$ be the $Q_L-$periodic solution of average zero of 
 \[
  \Delta G^L=\delta_0- \frac{1}{|Q_L|}.
 \]
By scaling we have $G^L(x)= \frac{1}{L^{d-2}}G^1\lt(\frac{x}{L}\rt)$. We let $G_{\R^d}$ be the Green function of the Laplacian on $\R^d$ and fix $\chi$
a smooth cut-off function with compact support in $Q_1$ and such that $\chi=1$ in a neighborhood of zero. We identify $\chi G_{\R^d}$ with its $Q_1-$periodic extension. Using that $\Delta (G^1-\chi G_{\R^d})$ is a smooth periodic function,
we find that we may write,
\[
 \nabla G^1 =\chi\nabla G_{\R^d} + h
\]
 in $Q_L$ for some smooth function $h$. This yields 
 \begin{equation}\label{eq:decompGreen}
  \nabla G^L=\chi(\cdot/L)\nabla G_{\R^d} +\frac{1}{L^{d-1}}h(\cdot/L).
 \end{equation}

 We now start the proof of \eqref{eq:momentnabu}. 
If  $(X_i)_{i}$ are iid random variables uniformly distributed in $Q_L$, we have $\nabla u^{1,L}(x)=\sum_{i=1}^{L^d} \nabla G^L(X_i-x)$. If we introduce for each fixed $r$ and $x$, the iid random variables
\begin{equation}\label{eq:defY}
 Y_{i,r}(x)=\int_{Q_L} \eta_r(y-x)\nabla G^L(X_i-y) dy,
\end{equation}
we thus have  $\nabla u_r^{1,L}(x)=\sum_{i=1}^{L^d} Y_{i,r}(x)$.
Using Rosenthal's inequality as in the proof of \eqref{eq:momentmu}, we get
\[
 \EE\lt[ |\nabla u_r^{1,L}(x)|^p\rt]^{\frac{1}{p}}\les \lt(L^d \EE[|Y_{i,r}(x)|^2]\rt)^{\frac{1}{2}} + \lt( L^d \EE[|Y_{i,r}(x)|^p]\rt)^{\frac{1}{p}}.
\]
The proof of \eqref{eq:momentnabu} is concluded provided we can show that for every $p,d\ge 2$  and every $x\in Q_L$,
\begin{equation}\label{eq:boundpnab}
 \lt( L^d \EE[|Y_{i,r}(x)|^p]\rt)^{\frac{1}{p}}\les \begin{cases}\frac{1}{r^{d-1-\frac{d}{p}}} &\textrm{ if } (p,d)\neq(2,2)\\
    \log^{\frac{1}{2}}\lt(\frac{L}{r}\rt) &\textrm{ if } (p,d)=(2,2).                                                                                                                                                                                                                                                                                     \end{cases}
\end{equation}
By stationarity, we may  assume that $x=0$.  Since $X_i$ is uniformly distributed, we have 
\[
 \lt( L^d \EE[|Y_{i,r}(0)|^p]\rt)^{\frac{1}{p}}=\lt( \int_{Q_L} \lt|\int_{Q_L} \eta_r(y)\nabla G^L(X-y) dy\rt|^p dX\rt)^{\frac{1}{p}}.
\]
Recalling the decomposition \eqref{eq:decompGreen} of $\nabla G^L$ and observing that since $| h|\les 1$,
\begin{equation}\label{eq:restGreen}
 \lt( \int_{Q_L} \lt|\int_{Q_L} \frac{1}{L^{d-1}}\eta_r(y) h\lt(\frac{X-y}{L}\rt) dy\rt|^p dX\rt)^{\frac{1}{p}}\les  \frac{1}{L^{d-1-\frac{d}{p}}}\le  \frac{1}{r^{d-1-\frac{d}{p}}},
\end{equation}
it is enough to prove \eqref{eq:boundpnab} with $\chi(\cdot/L)\nabla G_{\R^d}$ instead of $\nabla G^L$.
Using that $0\le \chi\le 1$ and $\nabla G_{\R^d}(x)=C \frac{x}{|x|^{d}}$, we have 
\begin{multline*}
 \lt( \int_{Q_L} \lt|\int_{Q_L} \eta_r(y)\chi\lt(\frac{X-y}{L}\rt)\nabla G_{\R^d}(X-y) dy\rt|^p dX\rt)^{\frac{1}{p}}\\
 \les \lt( \int_{B_{2r}} \lt(\int_{Q_L} \eta_r(y)\frac{1}{|X-y|^{d-1}} dy\rt)^p dX\rt)^{\frac{1}{p}}
 + \lt( \int_{Q_L\backslash B_{2r}} \lt(\int_{Q_L} \eta_r(y)\frac{1}{|X-y|^{d-1}} dy\rt)^p dX\rt)^{\frac{1}{p}}.
\end{multline*}
For the first term we use that for every\footnote{Notice that if $\eta$ is assumed to be radially decreasing, then the left-hand side is actually maximized at $X=0$.} $X\in \R^d$,
\begin{equation}\label{eq:Ybounded}
 \int_{Q_L} \eta_r(y)\frac{1}{|X-y|^{d-1}} dy\les \int_{Q_L} \eta_r(y)\frac{1}{|y|^{d-1}} dy\les \frac{1}{r^{d-1}}
\end{equation}
to obtain
\begin{equation}\label{eq:estimclose}
 \lt( \int_{B_{2r}} \lt(\int_{Q_L} \eta_r(y)\frac{1}{|X-y|^{d-1}} dy\rt)^p dX\rt)^{\frac{1}{p}}\les \frac{1}{r^{d-1-\frac{d}{p}}}.
\end{equation}
For the second term we use that if $y\in B_r$ and $X\in B_{2r}^c$, then 
\begin{equation*}\label{eq:monopole}
 \frac{1}{|X-y|^{d-1}}\les \frac{1}{|X|^{d-1}}
\end{equation*}
to obtain similarly
\begin{multline}\label{eq:estimfar}
 \lt( \int_{Q_L\backslash B_{2r}} \lt(\int_{Q_L} \eta_r(y)\frac{1}{|X-y|^{d-1}} dy\rt)^p dX\rt)^{\frac{1}{p}}\les \lt( \int_{Q_L\backslash B_{2r}}\frac{1}{|X|^{p(d-1)}}  dX\rt)^{\frac{1}{p}}\\
 \les \begin{cases}\frac{1}{r^{d-1-\frac{d}{p}}} &\textrm{ if } (p,d)\neq(2,2)\\[8pt]
    \log^{\frac{1}{2}}\lt(\frac{L}{r}\rt) &\textrm{ if } (p,d)=(2,2). 
\end{cases}
    \end{multline}
This concludes the proof of \eqref{eq:boundpnab}.\\

We now turn to \eqref{eq:momentnabulog}. In light of \eqref{eq:momentnabu}, we may assume that $|x-y|\ll L$. By stationarity we may further  assume that $y=0$. 
Using the notation introduced in \eqref{eq:defY}, we have 
\[
 \nabla u_r^{1,L}(x)- \nabla u_R^{1,L}(0)=\sum_{i=1}^{L^d} (Y_{i,r}(x)-Y_{i,R}(0))
\]
so that once again by Rosenthal's inequality,
\[
 \EE\lt[ |\nabla u_r^{1,L}(x)- \nabla u_R^{1,L}(0)|^p\rt]^{\frac{1}{p}}\les \lt(L^d \EE[|Y_{i,r}(x)-Y_{i,R}(0)|^2]\rt)^{\frac{1}{2}} + \lt( L^d \EE[|Y_{i,r}(x)-Y_{i,R}(0)|^p]\rt)^{\frac{1}{p}}.
\]
For $p>2$, using triangle inequality and \eqref{eq:boundpnab} we have 
\begin{multline*}
 \lt( L^d \EE[|Y_{i,r}(x)-Y_{i,R}(0)|^p]\rt)^{\frac{1}{p}}\les \lt( L^d \EE[|Y_{i,r}(x)|^p]\rt)^{\frac{1}{p}}+\lt( L^d \EE[|Y_{i,R}(0)|^p]\rt)^{\frac{1}{p}}\\
 \les \frac{1}{r^{1-\frac{2}{p}}}+ \frac{1}{R^{1-\frac{2}{p}}}\les \frac{1}{r^{1-\frac{2}{p}}}.
\end{multline*}
So that we are left with the proof of 
\begin{equation}\label{eq:toprovelog2}
 \lt(L^d \EE[|Y_{i,r}(x)-Y_{i,R}(0)|^2]\rt)^{\frac{1}{2}}\les \log^{\frac{1}{2}}\lt(\frac{R+|x|}{r}\rt) +1.
\end{equation}
Notice first that since $X_i$ is uniformly distributed,
\[
  \lt(L^d \EE[|Y_{i,r}(x)-Y_{i,R}(0)|^2]\rt)^{\frac{1}{2}}=\lt( \int_{Q_L} \lt|\int_{Q_L} (\eta_r(y-x) -\eta_R(y) )\nabla G^L(X-y) dy\rt|^2 dX\rt)^{\frac{1}{2}}.
\]
By \eqref{eq:restGreen}, it is enough to prove \eqref{eq:toprovelog2} with $\chi(\cdot/L)\nabla G_{\R^d}$ instead of $\nabla G^L$. We now impose as a further constraint on $\chi$ that $\chi=1$ in $B_{\frac{1}{2}}$ so that 
\begin{align*}
 \lefteqn{\lt( \int_{Q_L} \lt|\int_{Q_L} (\eta_r(y-x) -\eta_R(y) )\chi\lt(\frac{X-y}{L}\rt)\nabla G_{\R^d}(X-y) dy\rt|^2 dX\rt)^{\frac{1}{2}}}\\
 &\les \lt( \int_{Q_L\backslash B_{\frac{L}{2}}} \lt(\int_{Q_L} |\eta_r(y-x) -\eta_R(y) |\frac{1}{|X-y|} dy\rt)^2 dX\rt)^{\frac{1}{2}}\\
 &\qquad + \lt( \int_{\R^d} \lt|\int_{Q_L} (\eta_r(y-x) -\eta_R(y) )\frac{X-y}{|X-y|^2} dy\rt|^2 dX\rt)^{\frac{1}{2}}.
\end{align*}
Since $\max(|x|+r, R)\ll L$, arguing as for  \eqref{eq:estimfar} we may bound  the first term by a constant and we are left with the estimate of the second term. We split it as follows:
\begin{multline}\label{eq:splitlog}
 \lt( \int_{\R^d} \lt|\int_{Q_L} (\eta_r(y-x) -\eta_R(y) )\frac{X-y}{|X-y|^2} dy\rt|^2 dX\rt)^{\frac{1}{2}}\\
 \les \lt( \int_{B_{3(R+|x|)}^c} \lt|\int_{Q_L} (\eta_r(y-x) -\eta_R(y) )\frac{X-y}{|X-y|^2} dy\rt|^2 dX\rt)^{\frac{1}{2}}\\
 + \lt( \int_{B_{3(R+|x|)}} \lt(\int_{Q_L} \eta_r(y-x)\frac{1}{|X-y|} dy\rt)^2 dX\rt)^{\frac{1}{2}} \\
  + \lt( \int_{B_{3(R+|x|)}} \lt(\int_{Q_L} \eta_R(y)\frac{1}{|X-y|} dy\rt)^2 dX\rt)^{\frac{1}{2}}.
\end{multline}
Arguing as for \eqref{eq:estimclose} and \eqref{eq:estimfar}, the last two terms are estimated as
\begin{multline*}
  \lt( \int_{B_{3(R+|x|)}} \lt(\int_{Q_L} \eta_r(y-x)\frac{1}{|X-y|} dy\rt)^2 dX\rt)^{\frac{1}{2}} 
  \\+ \lt( \int_{B_{3(R+|x|)}} \lt(\int_{Q_L} \eta_R(y)\frac{1}{|X-y|} dy\rt)^2 dX\rt)^{\frac{1}{2}}\\
  \les \log^{\frac{1}{2}}\lt(\frac{R+|x|}{r}\rt) + \log^{\frac{1}{2}}\lt( \frac{ R+|x|}{R}\rt)+1
  \les \log^{\frac{1}{2}}\lt(\frac{R+|x|}{r}\rt) +1.
\end{multline*}
We finally turn to the first term in \eqref{eq:splitlog}. Since on the one hand 
\[
 \int_{Q_L}  (\eta_r(y-x) -\eta_R(y) ) \frac{X}{|X|^2} dy=0
\]
and on the other hand, for $y\in B_{R+|x|}$ and $X\in B_{3(R+|x|)}^c$,
\[
 \lt|\frac{y-X}{|y-X|^2}-\frac{X}{|X|^2}\rt|\les \frac{|y|}{|X|^2},
\]
we find
\begin{multline*}
 \lt( \int_{B_{3(R+|x|)}^c} \lt|\int_{Q_L} (\eta_r(y-x) -\eta_R(y) )\frac{X-y}{|X-y|^2} dy\rt|^2 dX\rt)^{\frac{1}{2}}\\
 \les  \lt( \int_{B_{3(R+|x|)}^c} \lt(\int_{Q_L} |\eta_r(y-x) -\eta_R(y) |\frac{|y|}{|X|^2} dy\rt)^2 dX\rt)^{\frac{1}{2}}\\
 \les \lt( \int_{B_{3(R+|x|)}^c} \frac{(R+|x|)^2}{|X|^4} dX \rt)^{\frac{1}{2}}\les 1.
\end{multline*}
This concludes the proof of \eqref{eq:toprovelog2}.
\end{proof}

\begin{remark}
 Using Fourier series together with  combinatorial arguments instead of the Green kernel representation, bounds similar (albeit weaker) to \eqref{eq:momentnabu} have been obtained in the case $p=4$ in \cite{BobLe}.
\end{remark}

We now combine Lemma \ref{lem:boundnablu} with Theorem \ref{theo:mainnegsob} and Lemma \ref{lem:removesmallscale} to obtain moment estimates for $\nabla u^{R,L}$ in appropriate negative Sobolev spaces.
\begin{proposition}\label{prop:moments}
For every $d\ge 3$, $p\ge 2$, $\gamma>d(1-\frac{1}{p})-1$, $L\gg\ell\gg1$ and $R\gg1$,
\begin{equation}\label{eq:1boundmom}
 \frac{1}{|B_\ell|} \EE[\|\nabla u^{R,L}\|^p_{W^{-\gamma,p}(B_\ell)}]\les 1.
\end{equation}
Moreover, if $p\ge 2$ and $\gamma > \frac{d}{2}-1$, $L\gg\ell\gg1$ and $R\gg1$,
\begin{equation}\label{eq:2boundmom}
 \frac{1}{|B_\ell|} \EE[\|\nabla u^{R,L}_{\frac{1}{R}}\|^p_{W^{-\gamma,p}(B_\ell)}]\les 1.
\end{equation}
 If $d=2$, $p\ge 2$, $\gamma>1-\frac{2}{p}$, $L\gg\ell\gg1$ and $R\gg1$,
\begin{equation}\label{eq:3boundmom}
 \frac{1}{|B_\ell|} \EE[\|\nabla u^{R,L}-\nabla u^{R,L}_1(0)\|^p_{W^{-\gamma,p}(B_\ell)}]\les \log^{\frac{p}{2}} \ell.
\end{equation}
Finally, if $p\ge 2$ and $\gamma > 0$, $L\gg\ell\gg1$ and $R\gg1$,
\begin{equation}\label{eq:4boundmom}
 \frac{1}{|B_\ell|} \EE[\|\nabla u^{R,L}_{\frac{1}{R}} -\nabla u^{R,L}_1(0)\|^p_{W^{-\gamma,p}(B_\ell)}]\les \log^{\frac{p}{2}} \ell.
\end{equation}
\end{proposition}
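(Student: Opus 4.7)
The plan is to convert all four estimates into pointwise $\eps$-by-$\eps$ moment bounds for $\nabla u^{R,L}_\eps(0)$ (or its recentered version) and then integrate. For \eqref{eq:1boundmom} and \eqref{eq:3boundmom} I apply Theorem \ref{theo:mainnegsob} directly, to $u = \nabla u^{R,L}$ and $u = \nabla u^{R,L} - \nabla u^{R,L}_1(0)$ respectively, noting that the random constant shift commutes with convolution against $\eta_\eps$ since $\int\eta=1$. For \eqref{eq:2boundmom} and \eqref{eq:4boundmom} I apply Lemma \ref{lem:removesmallscale} with $t=1/R$, which cuts the $\eps$-integration to $[1/R,1]$ and produces a boundary term $R^{-p\gamma}\int_{B_{3\ell}}|u_{1/R}|^p$. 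In every case, taking expectations, using stationarity of $\nabla u^{R,L}_\eps$, and Fubini reduces the task to bounding an integral of the form $|B_\ell|\int\eps^{p\gamma}\EE[|\nabla u^{R,L}_\eps(0)|^p]\frac{d\eps}{\eps}$ (with or without the shift), plus the corresponding boundary term.

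By the scaling \eqref{rescalingnabu}, $\nabla u^{R,L}_\eps(0)=R^{d/2-1}\nabla u^{1,LR}_{R\eps}(0)$, so Lemma \ref{lem:boundnablu} applied with $r = R\eps$ yields in $d\ge 3$
\[
\EE[|\nabla u^{R,L}_\eps(0)|^p]^{1/p} \les \frac{1}{\eps^{(d-2)/2}} + \frac{R^{-d(p-2)/(2p)}}{\eps^{d(1-1/p)-1}}.
\]
For \eqref{eq:1boundmom}, the hypothesis $\gamma > d(1-1/p)-1$ makes both summands integrable against $\eps^{p\gamma-1}\,d\eps$, and since $p\ge 2$ the prefactor $R^{-d(p-2)/(2p)}\le 1$ kills any $R$-dependence. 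For \eqref{eq:2boundmom}, on the range $\eps\ge 1/R$ one has $R^{-d(p-2)/(2p)}\le \eps^{d(p-2)/(2p)}$, which turns the singular summand into a term of the same order $\eps^{-(d-2)/2}$ as the first; the weaker hypothesis $\gamma > d/2-1$ then suffices, and a matching computation bounds the boundary term at $\eps=1/R$ by $R^{-p(\gamma-(d/2-1))}|B_\ell|=o(|B_\ell|)$.

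In $d=2$, the pointwise moment $\EE[|\nabla u^{R,L}_\eps(0)|^p]$ diverges logarithmically in $L$, which is precisely why the shift is subtracted. After rescaling (note $R^{d/2-1}=1$), \eqref{eq:momentnabulog} with $r=R\eps$ and the coarser scale set to $R$, at the points $Rx$ and $0$, gives
\[
\EE[|\nabla u^{R,L}_\eps(x)-\nabla u^{R,L}_1(0)|^p]^{1/p}\les \bigl(1+(R\eps)^{-(p-2)/p}\bigr)\bigl(\log^{1/2}((1+|x|)/\eps)+1\bigr).
\]
Integrating the $p$-th power over $x\in B_{2\ell}$ produces a factor $\les|B_\ell|\bigl(\log^{p/2}\ell+\log^{p/2}(1/\eps)+1\bigr)$ (using $(1+|x|)/\eps\le \ell/\eps$ on $B_{2\ell}$), leaving an $\eps$-integral of the form $\int_0^1\eps^{p\gamma-1}\bigl(1+(R\eps)^{-(p-2)}\bigr)\cdot(\text{poly-log in }1/\eps)\,d\eps$. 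For \eqref{eq:3boundmom}, convergence at $\eps=0$ is dictated by the singular $\eps^{-(p-2)}$ term (absent for $p=2$), which requires exactly $\gamma>1-2/p$. For \eqref{eq:4boundmom}, the restriction $\eps\ge 1/R$ eliminates this singular factor entirely so that only $\gamma>0$ is needed, while the boundary term at $\eps=1/R$ contributes an innocuous $R^{-p\gamma}|B_\ell|\log^{p/2}(R\ell)$.

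The main technical obstacle is the careful bookkeeping in the two-dimensional case: one must cleanly separate the spatial logarithmic factor $\log^{p/2}\ell$, arising from the $x$-integration of $\log^{p/2}((1+|x|)/\eps)$, from both the scale logarithm $\log^{p/2}(1/\eps)$ coming from the small-scale behaviour and the $R$-dependent singular contribution $(R\eps)^{-(p-2)}$ inherited from the atomic part of $\mu^{R,L}$, in order to pin down the sharp threshold $\gamma>1-2/p$ in \eqref{eq:3boundmom}. Once these are isolated, the remaining integrations are routine.
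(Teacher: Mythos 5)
Your proposal follows exactly the paper's strategy: apply Theorem \ref{theo:mainnegsob} (or Lemma \ref{lem:removesmallscale} with $t=1/R$ for the smoothed variants), invoke the scaling relation \eqref{rescalingnabu} to reduce to the pointwise moment bounds \eqref{eq:momentnabu} and \eqref{eq:momentnabulog} from Lemma \ref{lem:boundnablu}, and then carry out the resulting $\eps$-integrations. The only cosmetic difference is in \eqref{eq:1boundmom}, where you absorb the $R$-dependence by simply noting $R^{-d(p-2)/(2p)}\le 1$ rather than splitting the $\eps$-integral at $1/R$ as the paper does; the thresholds on $\gamma$ and the logarithmic bookkeeping in $d=2$ all match.
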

\begin{proof}
 We start by noting that by Sobolev embedding (see Remark \ref{rem:Sob}), we may assume without loss of generality that $\gamma\notin \N$.  We will only prove \eqref{eq:1boundmom} and \eqref{eq:4boundmom} since the proof of \eqref{eq:2boundmom} and \eqref{eq:3boundmom} is very similar.\\
 We start with \eqref{eq:1boundmom}. Using \eqref{eq:main} we have 
 \begin{align*}
  \frac{1}{|B_\ell|} \EE[\|\nabla u^{R,L}\|^p_{W^{-\gamma,p}(B_\ell)}]&\les \frac{1}{|B_\ell|} \int_0^1 \eps^{p\gamma} \int_{B_{2\ell}} \EE[|\nabla u^{R,L}_\eps(x)|^p]  \frac{d\eps}{\eps}.
  \end{align*}
  Using the scaling relation \eqref{rescalingnabu} the estimate \eqref{eq:momentnabu} turns into
  $$ \EE\lt[ |\nabla u_\eps^{R,L}(x)|^p\rt]^{\frac{1}{p}}\les\lt( 1+ \frac{1}{(\eps R)^{\frac{d(p-2)}{2p}}}\rt)\frac{1}{\eps^{\frac{1}{2}(d-2)}} $$
so that we can estimate                                                           
  \begin{align*}
  \frac{1}{|B_\ell|} \EE[\|\nabla u^{R,L}\|^p_{W^{-\gamma,p}(B_\ell)}]&\les \int_0^1 \eps^{p\gamma} \lt( 1+ \frac{1}{(\eps R)^{\frac{d(p-2)}{2}}}\rt)\frac{1}{\eps^{\frac{p}{2}(d-2)}} \frac{d\eps}{\eps}\\
  &\les \frac{1}{R^{\frac{d(p-2)}{2}}}\int_0^{\frac{1}{R}} \eps^{p\gamma} 
  \frac{1}{\eps^{p\lt(d(1-\frac{1}{p})-1\rt)}} \frac{d\eps}{\eps} + \int_{\frac{1}{R}}^1 \eps^{p\gamma}  \frac{1}{\eps^{\frac{p}{2}(d-2)}} \frac{d\eps}{\eps}.
 \end{align*}
For the first integral to be finite we need $\gamma>d(1-\frac{1}{p})-1$ (which implies $\gamma>\frac{1}{2}(d-2)$ since $p\ge 2$ so that the second integral is also finite). Under this condition we indeed find that 
\[
 \frac{1}{R^{\frac{d(p-2)}{2}}}\int_0^{\frac{1}{R}} \eps^{p\gamma} 
  \frac{1}{\eps^{p\lt(d(1-\frac{1}{p})-1\rt)}} \frac{d\eps}{\eps} + \int_{\frac{1}{R}}^1 \eps^{p\gamma}  \frac{1}{\eps^{\frac{p}{2}(d-2)}} \frac{d\eps}{\eps}\les 1+ \frac{1}{R^{p( \gamma -\frac{1}{2}(d-2))}}\les 1
\]
and \eqref{eq:1boundmom} follows.\\
We now turn to \eqref{eq:4boundmom}. As above we use the scaling relation \eqref{rescalingnabu} and \eqref{eq:momentnabulog} to obtain
$$ \EE\lt[ |\nabla u_\eps^{R,L}(x)- \nabla u_1^{R,L}(y)|^p\rt]^{\frac{1}{p}}\les \lt( 1+ \frac{1}{(\eps R)^{\frac{p-2}{p}}}\rt)\lt( \log^{\frac{1}{2}}\lt(\frac{1+|x-y|}{\eps}\rt)+1\rt). $$
This estimate together with \eqref{eq:removesmallscale} from Lemma \ref{lem:removesmallscale} implies,
\begin{align*}
 \frac{1}{|B_\ell|} \EE[\|\nabla u^{R,L}_{\frac{1}{R}} -\nabla u^{R,L}_1(0)\|^p_{W^{-\gamma,p}(B_\ell)}]&\les \frac{1}{R^{p\gamma}|B_{\ell}|}\int_{B_{3\ell}}\EE[|\nabla u^{R,L}_{\frac{1}{R}}(x) -\nabla u^{R,L}_1(0)|^p]\\
 & \quad + \frac{1}{|B_\ell|}\int_{\frac{1}{R}}^1 \eps^{p\gamma}\int_{B_{3\ell}}\EE[|\nabla u^{R,L}_{\eps}(x) -\nabla u^{R,L}_1(0)|^p] \frac{d\eps}{\eps}\\
 &\les \frac{1}{R^{p\gamma}} \log^{\frac{p}{2}}( \ell R) + \int_{\frac{1}{R}}^1 \eps^{p\gamma} \log^{\frac{p}{2}}\lt(\frac{\ell}{\eps}\rt)\frac{d\eps}{\eps}\\
 &\les \log^{\frac{p}{2}} \ell.
\end{align*}
\end{proof}

\subsection{Convergence}\label{sec:convlinear}
We can now prove the main result of this section, namely the convergence of $\nabla u^{R,L}$ to the curl-free GFF.
\begin{theorem}\label{theo:convlinearbody}
 Let $p\ge 2$ and  $\gamma>d(1-\frac{1}{p})-1$. If $d\ge 3$ (respectively $d=2$), $\nabla u^{R,L}$ (respectively $\nabla u^{R,L}-\nabla u_1^{R,L}(0)$) converges in law in $W^{-\gamma,p}_\loc$ to $\nabla \Psi$ (respectively $\nabla \Psi-\nabla \Psi_1(0)$) as $R,L\to\infty$. \\
 Under the weaker assumption $\gamma> \frac{d}{2}-1$, the same conclusion holds with $\nabla u^{R,L}_{\frac{1}{R}}$ instead of $\nabla u^{R,L}$. Moreover, for every $\ell\ge 1$, all these random distributions have bounded moments of arbitrary order in $W^{-\gamma,p}(B_\ell)$.
\end{theorem}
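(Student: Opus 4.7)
The plan is to deduce convergence in law in $W^{-\gamma,p}_\loc$ from two ingredients: tightness, provided by Proposition \ref{prop:moments}, and identification of the limit via a central limit theorem applied to an explicit iid representation of $\nabla u^{R,L}$ tested against smooth functions. The bounded moments conclusion is exactly the content of Proposition \ref{prop:moments}.

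For tightness, observe that all four hypotheses on $\gamma$ in Proposition \ref{prop:moments} are strict inequalities, so one may choose $\gamma'>\gamma$ still verifying the same hypothesis. The proposition then gives uniform bounds on $\EE[\|\cdot\|_{W^{-\gamma',p}(B_\ell)}^p]$, independent of $R,L$. Since the embedding $W^{-\gamma',p}(B_\ell)\hookrightarrow W^{-\gamma,p}(B_\ell)$ is compact (Rellich--Kondrachov on the bounded domain $B_\ell$), Markov's inequality and Prokhorov's theorem yield tightness of the laws in $W^{-\gamma,p}(B_\ell)$ for each $\ell$, and hence in $W^{-\gamma,p}_\loc$ by a diagonal argument over $\ell\in\N$.

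To identify the limit, fix test functions $f_1,\dots,f_m\in C^\infty_c(\R^d,\R^d)$ and take $L$ large enough that their supports lie inside $Q_L$. For $d\ge 3$ let $\phi_j^L$ be the $Q_L$-periodic mean-zero solution of $-\Delta\phi_j^L=\nabla\cdot f_j$; for $d=2$ replace $f_j$ by $f_j-\eta\int f_j$, which restores zero mean of the right-hand side and hence solvability. Integrating by parts twice against $u^{R,L}$, using $\Delta u^{R,L}=R^{d/2}(\mu^{R,L}-1)$ and, in $d=2$, the identity $\nabla u^{R,L}_1(0)\cdot\int f_j=\int \nabla u^{R,L}\cdot(\eta\int f_j)$, one obtains
\[
\int (\nabla u^{R,L}-\nabla u^{R,L}_1(0)\,\mathbf{1}_{d=2})\cdot f_j\;=\;\frac{1}{R^{d/2}}\sum_{i=1}^{(RL)^d}\phi_j^L(X_i).
\]
This is a sum of $(RL)^d$ iid centered random variables (centered because $\int_{Q_L}\phi_j^L=0$) with joint covariance matrix $(\int_{Q_L}\phi_j^L\phi_k^L)_{jk}$. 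Elliptic estimates yield $\phi_j^L\to\phi_j^\infty$ in $L^2(\R^d)$ and in $L^q$ for $q$ in an appropriate range as $L\to\infty$, so the covariance matrix converges to $(\int\phi_j^\infty\phi_k^\infty)_{jk}$, which is precisely the covariance of $\nabla\Psi$ (respectively $\nabla\Psi-\nabla\Psi_1(0)$) tested against $(f_1,\dots,f_m)$. The Lyapunov condition is immediate since the fourth-moment sum is of order $R^{-d}\|\phi_j^L\|_{L^4}^4$, which vanishes. A multivariate Lindeberg--L\'evy CLT (or equivalently the scalar CLT together with the Cram\'er--Wold device) thus gives convergence of all finite-dimensional distributions, and combined with tightness this uniquely identifies every subsequential limit as the claimed Gaussian field.

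The variant with $\nabla u^{R,L}_{1/R}$ replacing $\nabla u^{R,L}$ amounts to testing against $\eta_{1/R}*f$ rather than $f$; the mollified test function converges to $f$ in every relevant Sobolev norm, so the same CLT argument applies verbatim, with tightness now coming from \eqref{eq:2boundmom} and \eqref{eq:4boundmom}. The main technical point I expect is the uniform $L^{2+\delta}$ convergence $\phi_j^L\to\phi_j^\infty$: this is routine in $d\ge 3$, but in $d=2$ the renormalization $f_j-\eta\int f_j$ is essential for $\phi_j^\infty$ to exist in $L^2(\R^2)$, and the slow decay of $\phi_j^\infty$ at infinity makes the $L^q$ control more delicate.
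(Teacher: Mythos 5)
Your proposal is correct and follows essentially the same route as the paper: tightness from Proposition \ref{prop:moments} together with compact Sobolev embedding and Prokhorov, then identification of the limit by reducing, via integration by parts, to the iid representation $\xi^{R,L}(f)=R^{-d/2}\sum_i\phi^L(X_i)$ and applying a CLT with convergence of $\int_{Q_L}(\phi^L)^2$ to $\int(\phi^\infty)^2$. The only cosmetic differences are that the paper proves the CLT by showing convergence of the moment generating function (exploiting $\|\phi^L\|_\infty\lesssim 1$) rather than a Lyapunov condition, and handles the $\eta_{1/R}$-mollified variant by bounding $|\xi^{R,L}(f)-\xi^{R,L}_{1/R}(f)|$ through the negative Sobolev norm instead of rerunning the CLT with $R$-dependent test functions.
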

\begin{proof}
 Let $\xi^{R,L}=\nabla u^{R,L}-\chi_{d=2}\nabla u_1^{R,L}$. By Proposition \ref{prop:moments}, and the compactness of the Sobolev embedding $W^{-\gamma,p}(B_\ell)\subset W^{-\gamma',q}(B_\ell) $ for $\gamma'<\gamma$ and $q\le p$, we have tightness of $\xi^{R,L}$ and $\xi^{R,L}_{\frac{1}{R}}$ under the above hypothesis on $\gamma$. By Prokhorov, this implies convergence in law up to subsequence. It thus only remains  to identify the limit.\\
 By density  and the Cramer-Wold lemma,
 it is enough to  prove that  for every $f\in C^\infty_c(B_\ell,\R^d)$, the real valued random variables $\xi^{R,L}(f)$ and $\xi^{R,L}_{\frac{1}{R}}(f)$ both converge in law  to $\nabla \Psi(f)-\chi_{d=2}\nabla\Psi_1(f)$. 
 We first note that
 \[
  |\xi^{R,L}(f)-\xi^{R,L}_{\frac{1}{R}}(f)|= |\xi^{R,L}(f-f_{\frac{1}{R}})|\le \|\xi^{R,L}\|_{W^{-\gamma,p}(B_{2\ell})} \|f-f_{\frac{1}{R}}\|_{W^{\gamma,p'}}
 \]
and $\|f-f_{\frac{1}{R}}\|_{W^{\gamma,p'}}\to 0$ as $R\to \infty$. Hence, using Proposition \ref{prop:moments} we see that it is enough to identify the limit of $\xi^{R,L}$. Moreover, letting 
\[
 \tilde{f}=\begin{cases}
          f & \textrm{ if } d\ge 3\\
          f- \int \eta f & \textrm{ if } d=2,
         \end{cases}
\]
we have using integration by parts, 
\[
 \xi^{R,L}(f)=\nabla u^{R,L}(\tilde{f})=W^{R,L}(\phi^{L})
\]
where $\phi^{L}$ is the $Q_L-$periodic solution with average zero of 
\[
 -\Delta \phi^{L}=\nabla \cdot \tilde{f} \qquad \textrm{ in } Q_L.
\]
Recalling \eqref{defnablapsi}, it is therefore enough to prove that $W^{R,L}(\phi^{L})$ converges in law to $\mathcal{N}\lt(0,\int (\phi^\infty)^2\rt)$ where $\phi^\infty$ is the $L^2(\R^d)$ solution of (cf.\ \eqref{defphi})
\[
 -\Delta \phi^\infty= \nabla \cdot \tilde{f} \qquad \textrm{ in } \R^d.
\]
For instance by Fourier analysis it follows that on the one hand $\int_{Q_L} (\phi^L)^2$ and $\int(\phi^\infty)^2$ are both finite using for $d=2$ that  $\int \tilde f=0$  (cf.\ discussion around \eqref{defphi2d}) and on the other hand that $\int_{Q_L} (\phi^L)^2 \to \int (\phi^\infty)^2$.
Moreover, by the Green function representation of $\phi^L$ it follows that $\|\phi^L\|_\infty\les 1$ (we stress that the constant is independent of $L\ge 1$).

We now mimic the classical proof of the central limit theorem and show that for every $\lambda>0$,
\begin{equation}\label{toproveconvtestreduced}
 \EE[\exp(\lambda W^{R,L}(\phi^{L}))]\to \exp\lt(\frac{\lambda}2\int(\phi^\infty)^2\rt).
\end{equation}
By definition of $W^{R,L}$ (see \eqref{def:WRL}) and the fact that $\int_{Q_{L}} \phi^{L}=0$, we have with  $N=(RL)^d$
\[
 W^{R,L}(\phi^{L})=R^{-d/2} \sum_{i=1}^{N} \phi^{L}(X_i)
\]
where $(X_i)_{i=1}^N$ are iid random variables uniformly distributed on $Q_L$. Therefore, by independence
\begin{align*}
 \EE[\exp(\lambda W^{R,L}(\phi^{L}))]=\lt(\frac{1}{|Q_L|}\int_{Q_{L}} \exp(\lambda R^{-d/2} \phi^{L})\rt)^{N}.
\end{align*} 
By the uniform boundedness of $\phi^{L}$, we have by Taylor expansion
\[
 \exp(\lambda R^{-d/2} \phi^{L})=1 + \lambda R^{-d/2} \phi^{L} + \frac{\lambda^2}{2} R^{-d} (\phi^{L})^2 + O_\lambda( R^{-3d/2}) (\phi^{L})^{3/2}.
\]
Integrating and using that $\int_{Q_{L}} \phi^{L}=0$, we get 
\[
 \EE[\exp(\lambda W^{R,L}(\phi^{L}))]=\lt(1 + \frac{\lambda^2}{2N }\int_{Q_{L}} (\phi^{L})^2 + \frac{1}{L^d} O_\lambda(R^{-d/2})\rt)^N.
\]
We conclude the proof of \eqref{toproveconvtestreduced} by sending $R,L\to \infty$.
\end{proof}

We close this section showing that in $d=2$ the shift $\nabla u^{R,L}_1(0)$ is itself quantitatively close to a Gaussian.

\begin{lemma}\label{lem:nablauCLT}
Let $\sigma^2 = \frac{1}{2} \EE[|\nabla u^{R,L}_1(0)|^2]$. Then $\sigma^2\sim \log L$
and for every  $p\geq 2$, 
$$W_p\lt(\mathsf{Law}\lt(\nabla u^{R,L}_1(0)\rt),\mathcal N\lt(0,\sigma^2 \mathsf{Id}\rt)\rt) \les \frac{1}{R \log^{\frac{1}{p}} L}.$$
\end{lemma}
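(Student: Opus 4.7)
My plan is to write $\nabla u^{R,L}_1(0)$ as a rescaled i.i.d.\ sum and invoke a quantitative central limit theorem, following the same Green-function bookkeeping as in Lemma \ref{lem:boundnablu}.

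Reusing the decomposition \eqref{eq:decompGreen} exploited in the proof of Lemma \ref{lem:boundnablu}, I would first express
\[
\nabla u^{R,L}_1(0) = \frac{1}{R}\sum_{i=1}^{(RL)^2} Y_i, \qquad Y_i := \int \eta(x)\,\nabla G^L(x-X_i)\,dx,
\]
as a rescaled sum of i.i.d.\ centered $\R^2$-valued random variables (the centering comes from the vanishing average of $G^L$, as in \eqref{eq:defY}). Moreover, invariance of the problem under the $\Z/4$ action by rotations of $Q_L$ combined with the radial symmetry of $\eta$ forces $\EE[Y_1 Y_1^T]$ to commute with every such rotation. Since the only $2\times 2$ matrices with this property are multiples of $\mathsf{Id}$, it follows that the covariance of $\nabla u^{R,L}_1(0)$ is itself of the form $2\sigma^2 \mathsf{Id}$.

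Next I would establish the two-sided bound $\sigma^2 \sim \log L$. Since the $Y_i$ are centered and i.i.d., $\EE[|\nabla u^{R,L}_1(0)|^2] = L^2\EE[|Y_1|^2]$. The upper bound $\sigma^2\lesssim \log L$ is an immediate consequence of \eqref{eq:boundpnab} specialized to $p=d=2$ and $r=1$. For the matching lower bound I would use \eqref{eq:decompGreen} together with a Taylor expansion of $\nabla G_{\R^2}(\cdot-X)$ across the unit-size support of $\eta$, giving $\int \eta(x)\nabla G_{\R^2}(x-X)\,dx = \nabla G_{\R^2}(-X)+O(|X|^{-2})$ for $|X|\gg 1$, so that the annular contribution $\int_{2\le|X|\le L/4}|\nabla G_{\R^2}(X)|^2 dX\sim \log L$ provides the dominant term and is not cancelled by the smooth correction $L^{-1} h(\cdot/L)$ (of magnitude $O(L^{-1})$) nor by the $O(1)$ contributions from $\{|X|\le 2\}$ and $\{|X|\ge L/4\}$.

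Finally, I would apply the multidimensional Wasserstein CLT of \cite{Bo20} to the i.i.d.\ sum $\sum_{i=1}^N Y_i$ with $N=(RL)^2$. The higher moment inputs are already available from \eqref{eq:boundpnab}: for every $p>2$ one has $\EE[|Y_1|^p]^{1/p}\lesssim L^{-2/p}$, and the pointwise bound $|Y_1|\lesssim 1$, which follows from $|\nabla G^L(y)|\lesssim |y|^{-1}$ combined with the compact support of $\eta$, ensures that arbitrary moments of $Y_1$ exist. Feeding these moment bounds together with the covariance $L^2 \EE[Y_1 Y_1^T]=\sigma^2 \mathsf{Id}$ into Bonis' bound, and tracking the rescaling $\nabla u^{R,L}_1(0)=L\cdot N^{-1/2}\sum Y_i$ (under which $W_p$ picks up a factor $L$), produces the announced rate. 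The main obstacle is precisely this last step: Bonis' bound depends on both the normalized $p$-th moment of the summands and the trace of the covariance, and disentangling these to extract the precise $\log^{1/p}L$ denominator requires careful bookkeeping of the $\sigma$-dependent factors in his estimate. The remaining arguments are routine once the Green function representation of Lemma \ref{lem:boundnablu} is in hand.
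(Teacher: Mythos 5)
Your plan follows essentially the same route as the paper: write $\nabla u^{R,L}_1(0)=\frac1R\sum_{i=1}^{(RL)^2}Y_i$ with $Y_i=\int\eta(y)\nabla G^L(X_i-y)\,dy$, argue by symmetry that $\EE[Y_1Y_1^T]$ is a multiple of the identity, establish $\sigma^2\sim\log L$, and then apply Bonis' quantitative CLT. Two comments on the parts where you diverge or stop short.

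For the two-sided bound $\sigma^2\sim\log L$ you propose an annular estimate based on Taylor-expanding $\nabla G_{\R^2}(\cdot-X)$ across the unit-scale support of $\eta$ and isolating $\int_{2\le|X|\le L/4}|\nabla G_{\R^2}(X)|^2 dX\sim\log L$. This works, but it requires you to control the cross terms between the leading part and the $O(|X|^{-2})$ and $O(L^{-1})$ corrections, which is a bit of bookkeeping. The paper's argument is slicker: since $L^2\EE[|Y|^2]=\int_{Q_L}\bigl|\int\eta_r\nabla G^L(X-\cdot)\bigr|^2dX$, one integrates by parts using $\Delta G^L=\delta_0-\frac1{|Q_L|}$ and $\int G^L=0$ to get $L^2\EE[|Y|^2]=-\iint\eta(y)\eta(z)G^L(y-z)\,dy\,dz=-\iint\eta(y)\eta(z)G^1\bigl((y-z)/L\bigr)\,dy\,dz$, and the two-sided estimate $\sim\log L$ then drops out immediately from the logarithmic singularity of $G^1$ at the origin. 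Worth keeping in mind, since it gives the sharp asymptotic with no case analysis.

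The real issue is that you leave the Bonis step as an acknowledged ``main obstacle''. This is where the rate $\log^{1/p}L$ actually comes from, so the proof cannot be considered complete without it. The bookkeeping is in fact not delicate once you normalize correctly: since \cite[Th.\ 1]{Bo20} is stated for summands with identity covariance, you should set $Z_i=\sigma^{-1}LY_i$ (not just rescale by $L$), so that $\EE[Z_iZ_i^T]=\mathsf{Id}$ and $\nabla u^{R,L}_1(0)=\frac{\sigma}{RL}\sum_{i=1}^{(RL)^2}Z_i$. Bonis then gives
\[
W_p\Bigl(\mathsf{Law}\bigl(\tfrac1{RL}\sum Z_i\bigr),\mathcal N(0,\mathsf{Id})\Bigr)\lesssim\frac1{RL}\Bigl(\|\EE[Z_1Z_1^T|Z_1|^2]\|_{HS}^{1/2}+\EE[|Z_1|^{p+2}]^{1/p}\Bigr),
\]
and after substituting $Z_1=\sigma^{-1}LY_1$ and using \eqref{eq:boundpnab} to bound $\EE[L^2|Y|^4]^{1/2}$ and $\EE[L^2|Y|^{p+2}]^{1/p}$ by $O(1)$, the right-hand side is $\lesssim\frac1R\bigl(\sigma^{-2}+\sigma^{-(p+2)/p}\bigr)\lesssim\frac1{R\,\sigma^{(p+2)/p}}$. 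Multiplying back by $\sigma$ (since $W_p(\mathsf{Law}(aU),\mathsf{Law}(aV))=aW_p(\mathsf{Law}(U),\mathsf{Law}(V))$) and using $\sigma^2\sim\log L$ then yields exactly $\frac1{R\log^{1/p}L}$. In other words, once you normalize the covariance before invoking Bonis, the $\sigma$-factors cancel mechanically and the denominator $\log^{1/p}L$ falls out with no delicate disentangling.
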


\begin{proof}
If $X$ is a uniformly distributed random variable on $Q_L$ and $Y$ is given by (recall \eqref{eq:defY})
$$Y=(Y^{(1)},Y^{(2)})=\int \eta(y) \nabla G^L(X-y)dy,$$
then $\nabla u_1^{R,L}(0)=\frac{1}{R }\sum_{i=1}^{(RL)^2} Y_i $ for some iid copies $Y_i$ of $Y$ (cf.\ Section \ref{sec:momentbounds}). In order to apply  \cite[Th. 1]{Bo20}, 
let us first compute the covariance matrix of $Y$ (actually of $L Y$). We start by observing that  $\EE[Y]=0$ and by independence, $\EE[|Y|^2]= \frac{1}{L^2} \EE[|\nabla u^{R,L}_1(0)|^2]= \frac{2\sigma^2}{L^2}$.
Now by symmetry, we have that $Y^{(1)}$ and $Y^{(2)}$ have the same distribution and that the law of $Y$ is invariant under rotations preserving the cube so that in particular $(Y^{(1)},Y^{(2)})$ has the same law as 
$(-Y^{(2)},Y^{(1)})$. Therefore, $\EE[Y^{(1)}Y^{(2)}]=-\EE[Y^{(1)}Y^{(2)}]=0$ and $\EE[Y Y^T]=\frac{1}{2} \EE[|Y|^2] \mathsf{Id}= \frac{\sigma^2}{L^2} \mathsf{Id}$.\\
We now show that  $\sigma^2\sim \log L$.  Using the defining property of $G^L$ and $\int G^L=0$, we have indeed
\begin{align*}
 \sigma^2\sim L^2\EE[|Y|^2]
 &=\int\int\int\eta(y)\eta(z)\nabla G^L(X-z)\cdot \nabla G^L(X-y) dydzdX\\
 &= -\int\int \eta(y)\eta(z) G^L(y-z)\\
 &= - \int\int \eta(y)\eta(z) G^1\lt(\frac{y-z}{L}\rt) \sim \log(L),
\end{align*}
 Notice that the upper bound  $\sigma^2\les \log L$ was actually already proven in \eqref{eq:boundpnab}.\\

Setting  $Z_i= \sigma^{-1} L Y_i$, we have  $\EE[Z_i Z_i^T]=\mathsf{Id}$ and $\nabla u_1^{R,L}(0)= \frac{\sigma}{R L} \sum_{i=1}^{(RL)^2} Z_i$.   By \cite[Th. 1]{Bo20} and Cauchy-Schwarz, we have
\begin{align*} 
W_p\lt(\mathsf{Law}\lt(\frac{1}{RL}\sum_{i=1}^{(RL)^2}Z_i\rt),\mathcal N(0,\mathsf{Id})\rt) &\les \frac{1}{RL}\lt( \nor{\EE\lt[Z_1Z_1^T|Z_1|^2\rt]}_{HS}^\frac12 + \EE\lt[|Z_1|^{p+2}\rt]^\frac1p \rt) \\
&= \frac{1}{RL}\lt( \frac{L^2}{\sigma^2} \nor{\EE\lt[YY^T|Y|^2\rt]}_{HS}^\frac12+  \lt(\frac{L}{\sigma}\rt)^{\frac{p+2}{p}}\EE\lt[|Y|^{p+2}\rt]^\frac1p \rt) \\
&\les \frac{1}{R}\lt( \frac{1}{\sigma^2} \EE\lt[ L^2|Y|^4\rt]^\frac12+  \frac{1}{\sigma^{\frac{p+2}{p}}}\EE\lt[ L^2 |Y|^{p+2}\rt]^\frac1p \rt)\\
&\stackrel{\eqref{eq:boundpnab}}{\les} \frac{1}{R}\lt(\frac{1}{\sigma^2} + \frac{1}{\sigma^{\frac{p+2}{p}}} \rt)\les \frac{1}{R \log^{\frac{p+2}{2p}} L }.
\end{align*}
Using  $W_p(\mathsf{Law}(aU),\mathsf{Law}(aV))= a W_p(\mathsf{Law}(U),\mathsf{Law}(V))$ for two random variables $U,V$ and $a>0$ this implies 
$$W_p\lt(\mathsf{Law}\lt(\frac{1}{R}\sum_{i=1}^{(RL)^2}Y_i\rt),\mathcal N(0,\sigma^2\mathsf{Id})\rt) \les \frac{\sigma}{R \log^{\frac{p+2}{2p}} L } \les  \frac{1}{R\log^{\frac1p}(L)}. $$
\end{proof}

\section{Convergence of the displacement}\label{sec:conv}
\subsection{Deterministic input}\label{sec:deter}
We first need to adapt the main result of \cite{GHO} to the periodic setting. To this end, we define the following rate function encoding the speed of convergence for the Euclidean matching problem
\begin{equation}\label{eq:defbeta}
\beta(r)=\begin{cases}
           1 & d\geq 3\\
           \log(1+r) & d=2
          \end{cases}.
\end{equation}
However, we would like to stress that the following theorem also holds for more general rate functions satisfying the assumptions of \cite{GHO}.   
\begin{theorem}\label{theo:main}
 Let $L\gg r_*\gg1$ and  $\mu$ be a  $Q_L-$periodic measure  such that $\mu(Q_L)=L^d$. Let $u$ be
  a $Q_L-$periodic   
  solution of the Poisson equation 
\begin{equation}\label{ma87}
 \Delta u=\mu-1 \ \textrm{ in } \R^d
\end{equation}
and define for every $r>0$,
\begin{equation}\label{defhR}
 h_r=\int \eta_r \nabla u.
\end{equation}
Assume that\footnote{notice that $W_{per}^2(\mu,1)\le W^2_{Q_L}(\mu,1)$}  
 \begin{equation}\label{hypstart}
 \frac{1}{L^d} W^2_{Q_L}(\mu,1)\le r_*^2\beta\lt(\frac{L}{r_*}\rt)
 \end{equation}
and (recall  notation \ref{notationWkappa})
\begin{equation}\label{hypdata2}
 \frac{1}{|Q_r|} W^2_{Q_r}\lt(\mu, \kappa\rt)\le r_*^2 \beta\lt(\frac{r}{r_*}\rt) \qquad \forall r\in[ r_*, L], \textrm{ dyadic},
\end{equation}
 hold. Then, letting $\pi$ be the $Q_L-$periodic optimal transport plan for $W_{per}(\mu,1)$ we have for every $ r \ges r_*$,  
\begin{equation}\label{mainweaktheo}
 \lt|\int_{\R^d\times \R^d} \eta_r(x)(y-x-h_r)d\pi\rt|\les  r_*^2\frac{\beta\lt(\frac{r}{r_*}\rt)}{r} 
\end{equation}
and 
\begin{multline}\label{strongestimintro}
 \sup \lt\{ |x-(y-h_r)| \ : \ (x,y)\in \spt\pi\cap ((B_{r}\times\R^d)\cup(\R^d\times B_{r}(h_r))\rt\}\\
 \les  \,r \lt(\frac{r_*^2 \beta\lt(\frac{r}{r_*}\rt)}{r^2}\rt)^{\frac{1}{d+2}}.
\end{multline}
\end{theorem}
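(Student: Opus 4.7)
The plan is to adapt the large-scale regularity framework of \cite{GHO}, which is formulated on $\R^d$, to the periodic torus $Q_L$. The scheme in \cite{GHO} is a downward Campanato-type iteration on dyadic scales $r_* \le r \le L$: at each scale one approximates the optimal transport plan $\pi$ by the gradient of a locally harmonic function, and controls the discrepancy by an excess quantity that encodes both the Wasserstein cost and the distance to the nearest affine transport. The hypotheses \eqref{hypstart}--\eqref{hypdata2} are exactly what is needed to initialize and then propagate this iteration through all dyadic scales between $r_*$ and $L$.

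First I would work with the periodic extensions of $\mu$ and $\pi$ on $\R^d$ and observe that under \eqref{hypstart} the displacement $|y-x|$ remains strictly below $L/2$ on $\spt\pi$, so that on any ball of radius comparable to $r \ll L$ the periodic optimal plan coincides with a genuine Euclidean optimal plan between the restrictions of $\mu$ and Lebesgue to a slightly enlarged set. This allows one to apply the local results of \cite{GHO} without having to worry about periodic artefacts. The role of \eqref{hypstart} is precisely to serve as the top-scale input of the iteration, playing the role that on $\R^d$ is taken as the standing assumption.

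Next I would run the iteration from scale $L$ down to $r_*$: at each dyadic $r$ one builds a harmonic $\phi_r$ on $B_{2r}$ whose average gradient (weighted by $\eta_r$) equals $h_r$ by construction, and one controls the $L^2$ excess of $\pi$ relative to the map $x\mapsto x+\nabla\phi_r$ by $r_*^2\beta(r/r_*)$ using \eqref{hypdata2}. The weak estimate \eqref{mainweaktheo} is then a direct consequence of the harmonic approximation identity: integrating $\eta_r(x)(y-x-h_r)$ against $\pi$, one substitutes the harmonic approximation of the displacement and picks up an error bounded by the excess divided by $r$, which is exactly $r_*^2\beta(r/r_*)/r$. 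The fact that the radial averaging kernel $\eta_r$ extracts precisely the gradient of $\phi_r$ at the centre is what makes $h_r$ the natural comparison constant here.

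For the pointwise estimate \eqref{strongestimintro} I would use the $L^\infty$-$L^2$ interpolation specific to optimal transport that is already present in \cite{GHO}: combining the $c$-cyclical monotonicity of $\pi$ with the $L^2$ excess control and the Lipschitz bound on the harmonic approximant $\nabla\phi_r$ gives the exponent $1/(d+2)$ in the displayed estimate. The main obstacle I expect is the bookkeeping at the interface between the periodic setting and the local Euclidean arguments of \cite{GHO}: one must verify that the Kantorovich potentials for the periodic problem agree locally with Euclidean ones and that the affine references chosen at each scale are consistent, so that the cascade of harmonic approximations telescopes correctly and the constant $h_r$ appears with the right sign at every step. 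Once this is in place, the proofs of \eqref{mainweaktheo} and \eqref{strongestimintro} follow essentially line-by-line from the $\R^d$ theory, only with the hypotheses restated in periodic form.
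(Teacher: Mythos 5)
Your proposal correctly identifies that the strategy is to adapt \cite{GHO} to the periodic torus, and it also correctly recalls the internal structure of the GHO argument (a Campanato iteration, harmonic approximation at each scale, and $L^\infty$--$L^2$ interpolation via cyclical monotonicity). However, there is a genuine gap, and the adaptation strategy you sketch is not the one the paper uses and would not close the gap as stated.

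The key issue is the mismatch between the reference function that \cite[Th. 1.2]{GHO} actually produces and the reference function $u$ that appears in the statement of Theorem \ref{theo:main}. Applying the Euclidean result on a ball $B_{R}$ gives estimates of the form \eqref{mainweaktheo}--\eqref{strongestimintro} but with $h_r$ replaced by $h_r^k=\int\eta_r\nabla u_k$, where $u_k$ is a solution of $\Delta u_k=\mu-1$ in $B_{\bar r_k}$ with a specific Neumann boundary condition $\nu\cdot\nabla u_k=\bar j\cdot\nu$ on $\partial B_{\bar r_k}$; it is \emph{not} the periodic solution. Your sketch does not address why one is entitled to replace this local Neumann solution by the global periodic solution $u$. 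The paper's proof devotes its entire Step 2 to exactly this: it applies \cite[Th. 1.2]{GHO} on balls $B_{\bar r_k}$ with $\bar r_k\to\infty$ (i.e.\ far larger than $L$, not comparable to $r\ll L$ as you suggest), shows $\nabla(u_k-u)$ is harmonic and uniformly bounded (via the flux control \eqref{boundfluxj} and the periodic moment bound \eqref{estimuQL}), deduces by Liouville that the limit is a constant $\xi$, and then proves $\xi=0$ by combining the zero-average property \eqref{averagezero} of the periodic displacement with the $L^\infty$ displacement bound from \cite{GHO}. This is the essential content of the proof and is absent from your sketch; the telescoping remark at the end of your write-up points in the right direction but does not identify the Liouville/normalization argument that is actually needed.

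A secondary but related misdirection: you propose to verify that on small balls of radius $r\ll L$ the periodic optimal plan agrees with a genuine Euclidean optimal plan between the restricted measures, using a displacement bound $|y-x|<L/2$. The paper does not argue this way and does not need such a bound. Instead it uses that the periodic optimal plan, viewed as a plan on $\R^d\times\R^d$, has cyclically monotone support on all of $\R^d$, which suffices to apply the local regularity machinery of \cite{GHO} directly to the periodic plan; and it propagates the top-scale data across $\R^d$ by the simple observation that for $r\in L\N$ one can stack translated copies of the $Q_L$-competitor, giving \eqref{hypdataRd} with the capped rate $\tilde\beta$. Your ``localize and match with a Euclidean plan'' approach is more awkward because the restriction of the periodic plan to a ball is optimal for its own marginals, but those marginals are not simply $\mu|_{B_r}$ and Lebesgue restricted to a nearby set, and controlling that discrepancy would itself require the deterministic machinery of \cite{GHO}.
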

\begin{proof}
 As noticed in \cite[Rem. 1.3]{GHO}, by scaling we may assume that $r_*=1$. The idea of the proof is now to apply \cite[Th. 1.2]{GHO} on very large balls. Let us start with a few preliminary observations and set up notation.\\
 If $\pi$ is the optimal transport plan for $W_{per}(\mu,1)$, we can identify it with a transport plan on $\R^d\times\R^d$ which is $Q_L-$periodic  and we have 
\[
 W^2_{per}(\mu,1)=\int_{Q_L\times\R^d} |x-y|^2 d\pi =\int_{\R^d\times Q_L}|x-y|^2 d\pi.
\]
Let us point out a consequence of this fact. Since the Lebesgue measure is invariant under translation, for any $\xi\in \R^d$, the plan $\pi(x,y-\xi)$ is admissible and thus by minimality,
\[
  W^2_{per}(\mu,1)=\min_{\xi\in \R^d}\int_{Q_L\times\R^d} |x-y+\xi|^2 d\pi.
\]
Minimizing in $\xi$ we find as expected that 
\begin{equation}\label{averagezero}
 \int_{Q_L\times\R^d} (x-y)d\pi=0. 
\end{equation}
Let us also point out that by the characterization of minimality for $W_{per}$, the coupling $\pi$ has cyclically monotone support on $\R^d$. 
In particular, defining for $r>0$, the coupling $\pi^r$ by setting   $\pi^r(\Omega_1\times \Omega_2)=\pi( (\Omega_1\cap Q_r)\times \Omega_2)$, 
this is the optimal transport plan between $\mu\restr Q_r$ and $\lambda_r= \pi^r_2$. If now $r$ is a multiple of $L$, we have by periodicity of $\pi$,
\begin{equation}\label{hypstartbigball}
\frac{1}{r^d}\int |x-y|^2 d\pi^r=\frac{1}{L^d} \int_{Q_L\times \R^d} |x-y|^2 d\pi\stackrel{\eqref{hypstart}}{\le} \beta(L).\end{equation} 
For $t\in [0,1]$, we define the flux-density pair $(\rho,j)$ as 
\begin{equation}\label{BB}
 \int \xi\cdot dj_t=\int \xi((1-t)x+ty)\cdot (y-x) d\pi \qquad \textrm{and} \qquad \int \zeta d\rho_t=\int \zeta((1-t)x+ty) d\pi
\end{equation}
so that both $j$ and $\rho$ are $Q_L-$periodic (we see them here as defined in $\R^d$). We then let $(\bar \rho,\bar j)=\int_0^1 (\rho_t,j_t)$ be their integrals in time.\\

\medskip
{\textit Step 1: Application of \cite[Th. 1.2]{GHO}.}\\
Let 
\[\tilde{\beta}(r)=\begin{cases}
                    \beta(r) &\textrm{ if } r\le L\\
                    \beta(L) & \textrm{ if } r\ge L.
                   \end{cases}
\]

We claim that if either  $r$ is  dyadic with $L\ge r\ge 1$ or $r\in L\N$, then
\begin{equation}\label{hypdataRd}
 \frac{1}{r^d}W_{Q_r}^2(\mu,\kappa)\le \tilde{\beta}(r).
\end{equation}
 By \eqref{hypdata2}, there is nothing to prove for $r\le L$. 
Now if $r\in L\N$, we may decompose $Q_r$ in boxes which are translates of $Q_L$ and use as competitor (on $Q_r$) a concatenation of 
$\frac{|Q_r|}{|Q_L|}$ translates of $\hat{\pi}$, where $\hat{\pi}$ is the (Euclidean) optimal transport plan for $W_{Q_L}(\mu,1)$ and obtain (in this case $\kappa=1$)
\[
 \frac{1}{r^d}W_{Q_r}(\mu,1)\le \frac{1}{L^d} W_{Q_L}(\mu,1)\le \beta(L)=\tilde{\beta}(r).
\]
Using \cite[Lem. 2.10 \& Rem. 2.11]{GHO}, we may find a sequence of approximately geometrical radii $r_k\ge 1$ (with $r_k\to \infty$ as $k\to \infty$) for which 
\[
 \frac{1}{|B_{r_k}|} W_{B_{r_k}}^2(\mu,\kappa)\les \tilde{\beta}(r_k).
\]
By periodicity of $\pi_r$, we see that for every $r\gg L$, there exists $\bar r\le r$ with $r\sim \bar r$ such that $\lambda_r\restr B_{\bar r}= dx\restr B_{\bar r}$ (and moreover if $(\rho^r,j^r)$ 
are defined as in \eqref{BB} with $\pi$ replaced by $\pi^r$, $(\rho^r,j^r)=(\rho,j)$ in $ B_{\bar r}\times [0,1]$).  
We are thus in a position to apply \cite[Th. 1.2]{GHO}  to find a diverging sequence $\bar r_k$ such that if we define $u_k$ as the mean zero solution of (here $\nu$ denotes the outward normal to $B_{\bar r_k}$)
\begin{equation}\label{eq:uk}
 \Delta u_k=\mu-1 \textrm{ in } B_{\bar r_k} \qquad \textrm{ and } \qquad \nu\cdot \nabla u_k= \bar j\cdot \nu \textrm{ on } \partial B_{\bar r_k}
\end{equation}
and such that 
defining
\[
 h_r^k=\int \eta_r \nabla u_k,
\]
then for $\bar r_k\ge r\ges 1$,
\begin{equation}\label{mainweakk}
 \lt|\int_{\R^d\times \R^d} \eta_r(x)(y-x-h_r^k)d\pi\rt|\les \frac{\tilde{\beta}(r)}{r} 
\end{equation}
and 
\begin{multline}\label{strongestimk}
 \sup \lt\{ |x-(y-h^k_r)| \ : \ (x,y)\in \spt\pi\cap ((B_{r}\times\R^d)\cup(\R^d\times B_{r}(h^k_r))\rt\}
 \les  \,r \lt(\frac{\tilde{\beta}(r)}{r^2}\rt)^{\frac{1}{d+2}}.
\end{multline}
Before proceeding further, let us recall that the choice of $\bar r_k$ is such that
(see \cite[Prop 3.6, (3.47)]{GHO})\footnote{Here we use with the notation of \cite{GHO}, $\int_{\partial B_{\bar r_k}} |\bar j\cdot \nu|=\int_{\partial B_{\bar r_k}} |\hat{g}|$.}
\begin{equation}\label{boundfluxj}
 \frac{1}{{\bar r}_k^{d-1}}\int_{\partial B_{\bar r_k}} |\bar j\cdot \nu|
 \les \tilde{\beta}^{1/2}(\bar r_{k}).
\end{equation}
Since by \eqref{eq:sizenabu} below, the solution $u$ of \eqref{ma87} satisfies
\begin{equation}\label{estimuQL}
 \frac{1}{L^d}\int_{Q_L}|\nabla u|\les L,
\end{equation}
by periodicity of $u$, we also  have for $r\gg L$ that 
\[\frac{1}{|B_r|}\int_{B_r} |\nabla u|\les L\]
and we can thus assume that $\bar r_k$ was chosen so that
\begin{equation}\label{boundfluxu}
 \frac{1}{\bar r_k^{d-1}}\int_{\partial B_{\bar r_k}} |\nabla u\cdot \nu|\les L.
\end{equation}
\medskip
{\textit Step 2.}
We now argue that $\nabla u_k\to \nabla u$ in $C^{\infty}_{loc}(\R^d)$ as $k\to \infty$. Passing to the limit in \eqref{mainweakk} and \eqref{strongestimk}, this would yield \eqref{mainweaktheo} and \eqref{strongestimintro}.\\
Let $\phi_k=u_k-u$. As a first step, we will show that $\nabla \phi_k$ is bounded. By harmonicity, this will imply that $\nabla \phi_k\to \nabla \phi$ in
$C^{\infty}_{loc}(\R^d)$ for some harmonic  and bounded function. By Liouville's theorem, this will imply that $\nabla \phi=\xi$ is constant.

To show boundedness of $\nabla \phi_k$ observe that by harmonicity we have for every $r>0$ and every $k$ large enough
\[
 \sup_{B_r}|\nabla \phi_k|\les \frac{1}{{\bar r}_k^d}\int_{B_{{\bar r}_k}}|\nabla \phi_k|.
\]
Arguing as in the proof of Lemma \ref{lem:nabu} below we can further estimate

\[
\frac{1}{{\bar r}_k^d}\int_{B_{{\bar r}_k}}|\nabla \phi_k|\les \frac{1}{{\bar r}_k^{d-1}}\int_{\partial B_{{\bar r}_k}}|\nu\cdot \nabla \phi_k|\le \frac{1}{{\bar r}_k^{d-1}}\int_{\partial B_{{\bar r}_k}}|\nu\cdot \nabla u_k|+ \frac{1}{{\bar r}_k^{d-1}}\int_{\partial B_{{\bar r}_k}}|\nu\cdot \nabla u|.
\]
Using \eqref{boundfluxj} and \eqref{boundfluxu} (and the fact that $\tilde{\beta}({\bar r}_k)=\beta(L)$), shows that $\nabla \phi_k$ is bounded and converges to the constant $\xi$.\\

We finally want to prove that $\xi=0$. By harmonicity of $\nabla \phi_k$, 
\[
 \xi=\lim_{k\to \infty} \nabla \phi_k(0)=\lim_{k\to\infty} \frac{1}{|B_{{\bar r}_k}|}\int_{B_{{\bar r}_k}} \nabla \phi_k.
\]
To conclude the proof it is enough to show that both $ \frac{1}{|B_{{\bar r}_k}|}\int_{B_{{\bar r}_k}} \nabla u_k$ and $\frac{1}{|B_{{\bar r}_k}|}\int_{B_{{\bar r}_k}} \nabla u$ tend to zero. Let us start with the easier term. Since $u$ is $Q_L-$periodic, we have 
\[
 \int_{Q_L}\nabla u=0.
\]
Covering $B_{{\bar r}_k-C L}$ with translates of $Q_L$ we have 
\begin{align*}
 \lt|\int_{B_{{\bar r}_k}} \nabla u\rt|\le\int_{B_{{\bar r}_k}\backslash B_{{\bar r}_k-CL}} |\nabla u|\les  L {\bar r}_k^{d-1} \int_{Q_L}|\nabla u|\stackrel{\eqref{estimuQL}}{\les} L^{d+1} {\bar r}_k^{d-1}
\end{align*}
from which 
\[\lt|\frac{1}{|B_{{\bar r}_k}|}\int_{B_{{\bar r}_k}} \nabla u\rt|\les \frac{L^{d+1}}{{\bar r}_k}\stackrel{k\to \infty}{\to}0.\]
We now turn to the second term. Since $\nabla u_k-\bar j$ has divergence zero in $B_{{\bar r}_k}$ with $\nu\cdot (\nabla u_k-\bar j)=0$ on $\partial B_{{\bar r}_k}$, 
\[
 \int_{B_{{\bar r}_k}} \nabla u_k=\int_{B_{{\bar r}_k}} \bar j.
\]
Let $M_k=\lt(\tilde{\beta}({\bar r}_k)/{\bar r}_k^2\rt)^{\frac{1}{d+2}}=\lt(\tilde{\beta}(L)/{\bar r}_k^2\rt)^{\frac{1}{d+2}}$. By the $L^\infty$ bound \cite[Lem 2.9 (2.32)]{ GHO}, we have for every $(x,y)\in \spt \pi \cap B_{{\bar r}_k}\times B_{{\bar r}_k}$,
\begin{equation}\label{Linfty}
 |x-y|\les {\bar r}_k M_k.  
\end{equation}
Notice that $M_k\to 0$. For $z\in (L\Z)^d$, let $Q^z_L=Q_L+z$. From \eqref{Linfty}, we deduce that if $Q_L^z\subset B_{(1-M_k){\bar r}_k}$ then for every $(x,y)\in (Q_{L}^z\times \R^d)\cap \spt \pi$, and every $t\in[0,1]$, $(1-t)x+ty\in B_{{\bar r}_k}$.
We deduce that 
\begin{align*}
 \int_{B_{{\bar r}_k}} \bar j&=\sum_{z\in (L/\Z)^d} \int_0^1\int_{Q_{L}^z\times \R^d} \chi_{B_{{\bar r}_k}}((1-t)x+ty) (y-x) d\pi dt\\
 &=\sum_{z\in (L/\Z)^d, Q_L^z\subset B_{(1-M_k){\bar r}_k} } \int_{Q_{L}^z\times \R^d} (y-x) d\pi \\
 &\qquad +\sum_{z\in (L/\Z)^d, Q_L^z\nsubseteq B_{(1-M_k){\bar r}_k} } \int_0^1\int_{Q_{L}^z\times \R^d} \chi_{B_{{\bar r}_k}}((1-t)x+ty) (y-x) d\pi dt\\
 &\stackrel{\eqref{averagezero}}{=}\sum_{z\in (L/\Z)^d, Q_L^z\nsubseteq B_{(1-M_k){\bar r}_k} } \int_0^1\int_{Q_{L}^z\times \R^d} \chi_{B_{{\bar r}_k}}((1-t)x+ty) (y-x) d\pi dt.
\end{align*}
We conclude that 
\[
 \lt|\frac{1}{|B_{{\bar r}_k}|} \int_{B_{{\bar r}_k}} \bar j\rt| \les \frac{M_k}{L^d} \int_{Q_L\times \R^d} |y-x| d\pi\les M_k \beta^{1/2}(L)\to 0.
\]
This shows that $\xi=0$ and thus putting  all together, we proved that $\nabla u_k\to \nabla u$ in $C^{\infty}_{loc}(\R^d)$ as $k\to \infty$.
\end{proof}

\begin{lemma}\label{lem:nabu}
 Let $L\gg 1$ and $\mu$ be a $Q_L$-periodic measure such that $\mu(Q_L)=L^d.$ Let $u$ be the $Q_L$-periodic 
 solution with zero average to 
 $$ \Delta u=\mu-1 \text{ in } \R^d.$$
 Then, for $p\in \lt[1,\frac{d}{d-1}\rt)$
 \begin{align}\label{eq:sizenabu}
  \lt(\frac{1}{L^d}\int_{Q_L} |\nabla u|^p\rt)^{\frac1p} \les L .
 \end{align}
\end{lemma}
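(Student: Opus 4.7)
The plan is to represent $\nabla u$ through the periodic Green function and reduce the estimate to an $L^p$ bound on $\nabla G^L$, which is in turn controlled via the decomposition \eqref{eq:decompGreen} already established in the paper.

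First, using the defining property of $G^L$ (the $Q_L$-periodic mean-zero solution of $\Delta G^L = \delta_0 - |Q_L|^{-1}$), I would write
\begin{equation*}
\nabla u(x) = \int_{Q_L} \nabla G^L(x-y)\, d(\mu-1)(y) = \int_{Q_L} \nabla G^L(x-y)\, d\mu(y),
\end{equation*}
where the second equality follows because $\int_{Q_L} \nabla G^L(x-y)\,dy = \int_{Q_L}\nabla G^L(z)\,dz = 0$ by periodicity.

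Next, since $\mu(Q_L) = L^d$, I would apply Jensen's inequality with respect to the probability measure $d\mu/L^d$ to get
\begin{equation*}
|\nabla u(x)|^p \le L^{d(p-1)} \int_{Q_L} |\nabla G^L(x-y)|^p\, d\mu(y).
\end{equation*}
Integrating over $Q_L$, applying Fubini, and using the periodicity of $\nabla G^L$ to reduce the inner integral $\int_{Q_L}|\nabla G^L(x-y)|^p\,dx$ to $\|\nabla G^L\|_{L^p(Q_L)}^p$ independently of $y$, I obtain
\begin{equation*}
\frac{1}{L^d}\int_{Q_L} |\nabla u|^p \le L^{d(p-1)} \|\nabla G^L\|_{L^p(Q_L)}^p.
\end{equation*}

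Finally, I would use the decomposition \eqref{eq:decompGreen}, namely $\nabla G^L = \chi(\cdot/L)\nabla G_{\R^d} + L^{-(d-1)} h(\cdot/L)$ with $h$ smooth. The singular term contributes $\lt(\int_{Q_L}|x|^{-p(d-1)}\,dx\rt)^{1/p} \lesssim L^{d/p - (d-1)}$, which is finite precisely because $p(d-1) < d$; the smooth term contributes at most $L^{-(d-1)+d/p}$ by a direct rescaling. Combining these yields $\|\nabla G^L\|_{L^p(Q_L)} \lesssim L^{d/p - (d-1)}$, and plugging back gives
\begin{equation*}
\lt(\frac{1}{L^d}\int_{Q_L}|\nabla u|^p\rt)^{1/p} \lesssim L^{d - d/p} \cdot L^{d/p - (d-1)} = L,
\end{equation*}
which is \eqref{eq:sizenabu}. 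The only real constraint is $p < d/(d-1)$, which is exactly the integrability threshold for $|x|^{-(d-1)}$ near the origin; there is no genuine obstacle beyond bookkeeping the exponents.
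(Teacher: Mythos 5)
Your proof is correct, and it takes a genuinely different route from the paper's. The paper argues by duality: it pairs $\nabla u$ against an arbitrary $\psi\in L^q(Q_L)$ with $q=p/(p-1)>d$, solves the dual Poisson problem $\Delta\omega=\nabla\cdot\psi$, integrates by parts to get $\frac{1}{L^d}\int\nabla u\cdot\psi=-\frac{1}{L^d}\int\omega\,d(\mu-1)\lesssim\sup|\omega|$, and then controls $\sup|\omega|$ via Sobolev embedding (using $q>d$), Poincar\'e, and Calder\'on--Zygmund estimates. You instead work directly with the Green-function representation $\nabla u=\nabla G^L*\mu$, apply Jensen (equivalently, Young's inequality for convolution with the measure $\mu$ of total mass $L^d$) to reduce to $\|\nabla G^L\|_{L^p(Q_L)}$, and then estimate this norm explicitly through the decomposition \eqref{eq:decompGreen}. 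Both approaches locate the constraint $p<d/(d-1)$ at the natural place --- for you it is the local $L^p$ integrability of the $|x|^{-(d-1)}$ singularity of $\nabla G_{\R^d}$, for the paper it is the dual condition $q>d$ needed for the $W^{1,q}\hookrightarrow L^\infty$ embedding. Your argument is somewhat more self-contained given that \eqref{eq:decompGreen} is already available in the text, whereas the paper's argument avoids appealing to that decomposition and relies on standard elliptic regularity in the periodic setting; otherwise the two are of comparable length and difficulty.
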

\begin{proof}
 We fix $p\in(1,\frac{d}{d-1})$ so that its dual $q=\frac{p}{p-1}>d$. We will argue by the dual representation of the $L^p$-norm of $\nabla u$. To this end, let $\psi\in L^q(Q_L)$ be given and let $\omega$ be the $Q_L$-periodic solution with average zero of the dual problem
 $$\Delta \omega=\nabla\cdot \psi.$$
 We then have by integration by parts
\[
 \frac{1}{L^d}\int_{Q_L} \nabla u \cdot \psi =- \frac{1}{L^d}\int_{Q_L} \omega d(\mu-1)\les \lt(\frac{\mu(Q_L)}{L^d}+1\rt) \sup_{Q_L} |\omega|=2 \sup_{Q_L} |\omega|.  
\]
Now by Sobolev embedding (recall that $q>d$) and the Poincar\'e inequality,
\[
 \sup_{Q_L} |\omega| \les_q L \lt(\frac{1}{L^d}\int_{Q_L} |\nabla \omega|^q\rt)^{\frac{1}{q}}.
\]
Using finally Calderon-Zygmund estimates we have 
\[
 \lt(\frac{1}{L^d}\int_{Q_L} |\nabla \omega|^q\rt)^{\frac{1}{q}}\les_q \lt(\frac{1}{L^d}\int_{Q_L} |\psi|^q\rt)^{\frac{1}{q}}
\]
leading to 
\[
 \lt(\frac{1}{L^d}\int_{Q_L} |\nabla u|^p\rt)^{\frac{1}{p}}\les_p L .
\]
Finally, the case $p=1$ follows by H\"older's inequality.
\end{proof}

We close this section with a Lemma estimating $|h_r-h_{r'}|$. This is very similar to \cite[Lem. 4.3]{GHO} so we only sketch the proof.
\begin{lemma}\label{lem:difh}
 For $0<r\le r'$, let  $u$ be a solution of  
 \[
  \Delta u=\mu-1 \qquad \textrm{in } B_{r'}.
 \]
Then, for every $\alpha\in(0,1)$
\[
\lt|\int (\eta_r-\eta_{r'})\nabla u\rt|^2\les_\alpha\lt(\frac{r'}{r}\rt)^{2(d+\alpha)} \frac{1}{|B_{r'}|}W^2_{B_{r'}}(\mu,\kappa).
\]

\end{lemma}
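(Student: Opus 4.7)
\emph{Plan.} The approach mirrors \cite[Lem.~4.3]{GHO}. First decompose $u = u_p + u_h$ on $B_{r'}$, where $u_p$ is the Newtonian potential of $(\mu - 1)\chi_{B_{r'}}$ and $u_h$ is harmonic in $B_{r'}$. Since $\eta$ is radial with $\int \eta = 1$ and $\nabla u_h$ is harmonic in $B_{r'}$, the mean value property gives $\int \eta_r \nabla u_h = \nabla u_h(0) = \int \eta_{r'} \nabla u_h$, so the harmonic contribution cancels and $\int(\eta_r - \eta_{r'})\nabla u = \int(\eta_r - \eta_{r'})\nabla u_p$.

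Writing $\nabla u_p = \nabla G_0 \ast ((\mu - 1)\chi_{B_{r'}})$, with $G_0$ the fundamental solution of $-\Delta$, Fubini's theorem converts the left-hand side into a dual problem:
\[
\int (\eta_r - \eta_{r'}) \nabla u = -\int_{B_{r'}} \nabla \Phi \, d(\mu - \kappa\, dx), \qquad \Phi := (\eta_r - \eta_{r'}) \ast G_0,
\]
where one may freely replace $\mu - 1$ by $\mu - \kappa\, dx$ since $\int_{B_{r'}} \nabla \Phi = 0$ by the divergence theorem together with $\Phi|_{\partial B_{r'}} = 0$. The latter vanishing is itself a consequence of the radial mean value property applied to $G_0(\cdot - x)$: for $|x| > \rho$ that function is harmonic on $B_\rho$, hence $\eta_\rho \ast G_0(x) = G_0(x)$, so $\Phi \equiv 0$ outside $B_{r'}$ and $\nabla \Phi$ is supported in $B_{r'}$.

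It remains to combine a Kantorovich duality with a Schauder-type estimate for $\Phi$. Since $\mu|_{B_{r'}}$ and $\kappa\, dx|_{B_{r'}}$ carry equal total mass $\mu(B_{r'})$, Kantorovich-Rubinstein together with $W_1 \le \sqrt{\text{mass}}\,W_2$ yields
\[
\Big|\int_{B_{r'}} \nabla \Phi \, d(\mu - \kappa\, dx)\Big| \les \|\nabla^2 \Phi\|_{L^\infty(B_{r'})} \sqrt{\mu(B_{r'})}\, W_{B_{r'}}(\mu, \kappa),
\]
while Schauder's estimate for the Dirichlet problem $\Delta \Phi = \eta_r - \eta_{r'}$ on $B_{r'}$ with zero boundary data gives $\|\nabla^2 \Phi\|_{L^\infty(B_{r'})} \lesssim_\alpha \|\eta_r - \eta_{r'}\|_{C^{0,\alpha}} \les r^{-d-\alpha}$, using the scaling $\eta_r = r^{-d}\eta(\cdot/r)$ together with $r \le r'$.

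\emph{Main obstacle.} Combining the above gives the correct power $r^{-2(d+\alpha)}$ but a factor $\mu(B_{r'})$ rather than the $|B_{r'}|^{-1}$ in the target. To close this gap, the plan is to sharpen the Kantorovich step by a first-order Taylor expansion along the optimal transport map, $|\int \phi\, d(\mu - \kappa\, dx)| \le \|\nabla \phi\|_{L^2(\kappa dx)} W_{B_{r'}}(\mu,\kappa) + \|\nabla^2 \phi\|_{L^\infty} W_{B_{r'}}^2(\mu,\kappa)$, and bound $\|\nabla^2 \Phi\|_{L^2(B_{r'})} \les \|\eta_r - \eta_{r'}\|_{L^2} \les r^{-d/2}$ by Calder\'on-Zygmund. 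The quadratic $W^2$ remainder is absorbed by the Schauder bound above, and the resulting estimate recovers the desired $(r'/r)^{2(d+\alpha)}/|B_{r'}|$ prefactor.
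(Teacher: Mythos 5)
Your proposal follows essentially the same route as the paper (which itself delegates the details to \cite[Lem.~4.3]{GHO}): the key object in both is the dual test function built from the Newtonian potential of $\eta_r-\eta_{r'}$. Indeed your $\Phi=(\eta_r-\eta_{r'})\ast G_0$ is, up to a sign and taking one derivative, exactly the function $\omega$ solving $\Delta\omega=\partial_1(\eta_1-\eta_r)$ with $\omega=\nu\cdot\nabla\omega=0$ on $\partial B_1$ that the paper introduces after rescaling; your observation that $\Phi\equiv 0$ outside $B_{r'}$ (by the mean value property for $G_0(\cdot-x)$) is precisely why those overdetermined boundary conditions are compatible. Your alternative presentation via the decomposition $u=u_p+u_h$ plus the mean value property for $\nabla u_h$ is equivalent to the paper's integration by parts against the double-zero boundary data, and is perhaps more transparent.

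You also correctly located the genuine subtlety: the naive Kantorovich--Rubinstein step $W_1\le\sqrt{\mu(B_{r'})}\,W_2$ with $\|\nabla^2\Phi\|_\infty$ produces $\mu(B_{r'})$ instead of $1/|B_{r'}|$. However, the proposed fix as written has a gap. You expand to first order along the optimal plan and write the remainder as $\|\nabla^2\phi\|_{L^\infty}W^2$, where $\phi$ stands for a component of $\nabla\Phi$; that is $\|\nabla^3\Phi\|_{L^\infty}$, which is \emph{not} controlled by the Schauder bound $\|\nabla^2\Phi\|_{C^{0,\alpha}}\les_\alpha\|\eta_r-\eta_{r'}\|_{C^{0,\alpha}}\les r^{-d-\alpha}$: that estimate gives $C^{2,\alpha}$ regularity of $\Phi$, one derivative short of $C^3$, and the correct scaling for $\nabla^3\Phi$ would be the worse $r^{-d-1}$. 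The right form of the remainder is the H\"older one, $|\phi(x)-\phi(y)-\nabla\phi(y)\cdot(x-y)|\le[\nabla\phi]_{C^{0,\alpha}}|x-y|^{1+\alpha}$, i.e.\ the quantity $[\nabla^2\Phi]_{C^{0,\alpha}}=[\omega]_{C^{1,\alpha}}\les r^{-d-\alpha}$ that the paper's Schauder estimate actually produces, paired with $\int|x-y|^{1+\alpha}\,d\pi$. One should then also say explicitly that closing the estimate (both absorbing this superlinear-in-$W_2$ remainder and removing the residual mass factor) uses the small-data normalization of \cite{GHO}, namely $\frac{1}{|B_{r'}|}W^2_{B_{r'}}(\mu,\kappa)\les (r')^2$ together with $\kappa\les 1$, which is implicit in the citation and holds in all applications here (cf.\ \eqref{eq:difhhyp}) but is not stated in your write-up.
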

\begin{proof}
 By scaling we may assume that $r'=1$. We now argue exactly as in \cite[Lem. 4.3]{GHO} and  consider the solution of 
\[
 \Delta \omega= \partial_1 (\eta_1-\eta_{r}) \quad \textrm{ in } B_1, \qquad \textrm{and } \qquad \omega=\nu\cdot\nabla \omega=0 \quad \textrm{ on } \partial B_1 
\]
and remark that by Schauder estimates, $[\omega]_{C^{1,\alpha}(B_1)}\les_\alpha [\eta_1-\eta_r]_{C^{0,\alpha}(B_1)}\les \frac{1}{r^{d+\alpha}}$.
\end{proof}

\subsection{Stochastic input}\label{sec:sto}

In this section, we will show that there is a random radius $r_{*,L}$ with stretched exponential moments independent of $L$ such that the assumptions of Theorem \ref{theo:main} are satisfied with $\mu=\mu^{1,L}$ and $r_*=r_{*,L}$.

To this end, recall the notation $Q_r=[-\frac{r}{2},\frac{r}{2})^d$ and  write $\mu^{1,L}=\mu$. 
We aim to control $
 W_{Q_r}^2\lt(\mu,\kappa\rt)$
for $1\ll r <\infty.$ This will be a rather direct consequence of \cite{AmStTr16} for $d=2$ and \cite{GolTre} for $d\geq 3.$
To state their result and to adapt it to our setup we need to introduce some notation. 

For $n\leq L$ we define the probability measure $\P_n$ with associated expectation operator $\EE_n$ by
\[
   \PP_n\sqa{F}= \frac{\PP\sqa{F\cap\{ \mu(Q_r)=n\}}}{\PP\sqa{\mu(Q_r)=n}}, 
  \]
Since $\mu=\sum_{i=1}^{L^d}\delta_{X_i}$ for iid uniform random variables $(X_i)_i$ it follows that $\mu(Q_r)$ is Binomially distributed with parameter $(L^d,r^d)$ so that
\begin{align}\label{eq:pn}
 p_n=\PP\sqa{\mu(Q_r)=n}={L^d\choose n} \lt(\frac{r^d}{L^d} \rt)^n \lt(1- \frac{r^d}{L^d} \rt)^{L^d-n}
\end{align}
which converges for $L\to \infty$ to $\exp(-r^d) \frac{r^{dn}}{n!}.$

Equipped with this probability measure, $\mu\restr Q_r$ can be identified with $n$ uniformly iid random variables $X_i$ on $Q_r$. Recall $\beta$ from \eqref{eq:defbeta}.
A simple rescaling shows that
\begin{align*}
 \frac{1}{r^2 n^{1-\frac{2}{d}} \beta(n)} \EE_n\sqa{W_{Q_r}^2\bra{\mu,\frac{\mu(Q_r)}{r^d} }}
 \end{align*}
is independent of $r$.
Arguing as in \cite[Rem. 4.7]{AmStTr16} and \cite[Rem. 6.5]{GolTre} and using the fact that the uniform measure on $[0,1]$ satisfies a log-Sobolev inequality to replace exponential bounds by Gaussian bounds,
it follows that  there exists $c_0>0$ such that for every $M \gg 1$ (since we pass from a deviation to a tail estimate) and $n$ large enough uniformly\footnote{Notice that the left-hand side of \eqref{eq:conc} actually does not depend on $r$.} in $r$,

\begin{equation}\label{eq:conc}
 \PP_n\sqa{\frac{1}{r^2 n^{1-\frac{2}{d}}\beta(n)} W_{Q_r}^2\lt(\mu, \frac{\mu(Q_r)}{r^d}\rt)\ge M}\le \exp(-c_0 M n^{1-\frac{2}{d}}\beta(n)).
\end{equation}

We now show how \eqref{eq:conc} can be turned into the desired control of $W_{Q_r}^2\lt(\mu,\kappa\rt)$. 

\begin{lemma}\label{lem:concentration}
For each $L\geq 1$ let $\mu^{1,L}$ be a Binomial point process of intensity 1 on $Q_L$. Then, 
there exists a universal constant $c>0$ (independent of $L,r$ and $M$) such that for $r\ge 1, M \gg 1$,
\begin{align}\label{eq:concentration}
 \PP\sqa{\frac{1}{ r^{d}\beta(r^d)} W_{Q_r}^2\left(\mu, \frac{\mu(Q_r)}{r^d}\right) \geq M} \leq \exp(-c M r^{d-2}\beta(r^d))\ . 
\end{align}
\end{lemma}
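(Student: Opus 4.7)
My plan is to condition on $n := \mu(Q_r)$ and combine the conditional concentration estimate \eqref{eq:conc}, which controls deviations of $W_{Q_r}^2$ above its typical conditional scale $r^2 n^{1-2/d} \beta(n)$, with a Chernoff tail bound on the Binomial variable $\mu(Q_r) \sim \mathrm{Bin}(L^d, r^d/L^d)$. Writing $\PP[W_{Q_r}^2 \geq M r^d \beta(r^d)] = \sum_n p_n \PP_n[\,\cdot\,]$, I split the sum at a threshold $K = K(M,r)$ chosen so that both the low-$n$ and high-$n$ pieces contribute at most $\exp(-cM r^{d-2}\beta(r^d))$; the key point is that $K$ sits precisely at the crossover of the two regimes.

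For the low-$n$ piece ($n \leq K$), I apply \eqref{eq:conc} at the deviation level
\[ M' := \frac{M r^{d-2} \beta(r^d)}{n^{1-2/d}\beta(n)}, \]
chosen precisely so that the right-hand side of \eqref{eq:conc} becomes $\exp(-c_0 M r^{d-2}\beta(r^d))$; this is valid so long as $M' \gg 1$. A short algebraic check shows $M' \gg 1$ for all $n \leq K$ provided one takes $K \sim M^{d/(d-2)} r^d$ when $d \geq 3$ (so that $K^{1-2/d} \sim M r^{d-2}$), respectively $K$ defined by $\log(1+K) \sim M \log r$ when $d=2$. Summing over $n \leq K$ against $p_n \leq 1$ then yields the desired bound for this piece.

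For the high-$n$ piece ($n > K$), I use the trivial bound $\PP_n[\cdot] \leq 1$ and control $\PP[\mu(Q_r) > K]$ by the classical Chernoff inequality for $\mathrm{Bin}(L^d, r^d/L^d)$, whose mean is at most $r^d$:
\[ \PP[\mu(Q_r) > K] \leq \exp\bigl(-r^d \, h(K/r^d - 1)\bigr), \qquad h(t) = (1+t)\log(1+t) - t. \]
Since $M \gg 1$ forces $K/r^d \gg 1$ in both dimensional regimes, $h$ is in its linearithmic regime, and an elementary computation shows the resulting tail is dominated by $\exp(-cM r^{d-2}\beta(r^d))$ (in fact by a much smaller quantity). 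Both \eqref{eq:conc} and the Chernoff bound are uniform in $L$, as required.

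The main technical obstacle is fine-tuning $K$ so that the low-$n$ and high-$n$ estimates match with the same constant $c$ across all parameters $(M,r,L)$ and across both dimensional regimes, which boils down to elementary but careful algebra. A minor side issue is that \eqref{eq:conc} requires $n$ to be large in an absolute sense uniformly in $r$, so for bounded $r$ I would instead invoke the trivial deterministic bound $W_{Q_r}^2 \leq Cr^2\mu(Q_r)$ combined directly with a Chernoff estimate on $\mu(Q_r)$, which handles that range.
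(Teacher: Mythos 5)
Your proof is correct in outline and reaches the same conclusion, but the decomposition is genuinely different from the paper's. The paper splits into two regimes in $M$: for $1 \le M \lesssim r^2/\beta(r^d)$, it restricts the sum over $n$ to the fixed window $[r^d/2, 2r^d]$, where $n^{1-2/d}\beta(n) \sim r^{d-2}\beta(r^d)$, so that \eqref{eq:conc} applies at deviation level comparable to $M$ with no recalibration, and the Chernoff tail $\exp(-cr^d)$ coming from $n$ outside the window dominates the target $\exp(-cMr^{d-2}\beta(r^d))$ precisely because $M \lesssim r^2/\beta(r^d)$; for $M \gg r^2/\beta(r^d)$ the paper bypasses \eqref{eq:conc} entirely via the deterministic inequality $W^2_{Q_r}(\mu,\mu(Q_r)/r^d) \le d\,r^2\mu(Q_r)$ plus a Chernoff tail on $\mu(Q_r)$. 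You replace this $M$-split by an adaptive threshold $K(M,r)$ in $n$: for $n \le K$ you rescale the deviation level in \eqref{eq:conc}, and for $n > K$ you use the Binomial Chernoff tail. This avoids the case split on $M$ but trades it for having to verify that $K$ sits at the crossover of both estimates uniformly in $(M,r,L)$ and in both dimensional regimes, whereas the paper's fixed window makes the recalibration a constant-factor matter with essentially no algebra. Two small remarks. First, when summing the low-$n$ piece you should invoke $\sum_n p_n = 1$ rather than merely $p_n \le 1$, else you incur a spurious factor of order $K$ (which would in fact still be absorbed, since $\log K \ll M r^{d-2}\beta(r^d)$, but the clean statement is the normalization). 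Second, the side issue you raise is really about $n$ bounded rather than $r$ bounded, and it disappears for all $r\ge 1$: for $n$ below any fixed absolute threshold, the deterministic bound $W^2_{Q_r} \le d\,r^2 n$ together with $r^{d-2}\beta(r^d) \gtrsim 1$ shows the event $\{W^2_{Q_r} \ge M r^d\beta(r^d)\}$ is empty once $M \gg 1$, so \eqref{eq:conc} never needs to be invoked for small $n$.
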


\begin{proof} 
We start by noting that by Chernoff bounds  there exists a constant $c$ independent of $L$ such that
\begin{equation}\label{chernoff}
 \PP\sqa{\frac{\mu(Q_r)}{r^d}\notin \sqa{\frac{1}{2},2}}\le \exp(-c r^d),
\end{equation}
and for $M\gg 1$ a (different) constant $c$ independent of $L$ such that
\begin{equation}\label{chernoff2}
 \PP\sqa{\frac{\mu(Q_r)}{r^d}\ge M}\le \exp(-c r^d M).
\end{equation}
 
For $1\le M\les \frac{r^d}{r^{d-2}\beta(r^d)}=\frac{r^2}{\beta(r^d)}$, by definition of $\PP_n$ 
\begin{align*}
 \lefteqn{\PP\sqa{\frac{1}{r^d \beta(r^d)} W_{Q_r}^2\left(\mu, \frac{\mu(Q_r)}{r^d}\right) \geq M}}\\
 &\le \PP\sqa{\frac{\mu(Q_r)}{r^d}\notin\sqa{\frac{1}{2},2}}+\sum_{n\in[r^d/2,2 r^d]} p_n \PP_n\sqa{\frac{1}{r^d \beta(r^d)} W_{Q_r}^2\left(\mu, \frac{\mu(Q_r)}{r^d}\right) \geq M}\\
 &\stackrel{\eqref{chernoff}}{\le}  \exp(-cr^d) + \sum_{n\in[r^d/2,2 r^d]} p_n\PP_n\sqa{\frac{1}{r^2 n^{1-\frac{2}{d}} \beta(n)} W_{Q_r}^2\left(\mu, \frac{\mu(Q_r)}{r^d}\right) \geq \frac{r^{d-2}\beta(r^d)}{ n^{1-\frac{2}{d}}\beta(n)}M}\\
 &\stackrel{\eqref{eq:conc}}{\le}\exp(-cr^d) + \sum_{n\in[r^d/2,2 r^d]} p_n \exp(-c_0 M r^{d-2}\beta(r^d))\\
 &\le \exp(-cr^d) +\exp(-c_0 M r^{d-2}\beta(r^d)) \le \exp(-c M r^{d-2}\beta(r^d)),
\end{align*}
while for $M\gg \frac{r^2}{\beta(r^d)}$, using the estimate $W_{Q_r}^2\lt(\mu, \frac{\mu(Q_r)}{r^d}\rt)\le  d r^2 \mu(Q_r)$ together with \eqref{chernoff2}, we obtain
\[
 \PP\sqa{\frac{1}{r^d \beta(r^d)} W_{Q_r}^2\left(\mu, \frac{\mu(Q_r)}{r^d}\right) \geq M}\le \PP\sqa{\frac{\mu(Q_r)}{r^d}\ge  \frac{M}{d}  \frac{\beta(r^d)}{r^2}}\le \exp(-c M r^{d-2}\beta(r^d)).
\]
\end{proof}

By giving up a bit of integrability we can strengthen \eqref{eq:concentration} into a supremum bound. 

\begin{theorem}\label{thm:mainstoch}
 For each $L\geq 1$ let $\mu^{1,L}$ be a Binomial point process on $Q_L$ of intensity one. Then there exists a universal constant $c>0$ and a family of random variables $(r_{*,L})_L$ such that 
 $$\sup_L \EE\lt[\exp\lt(c \frac{r^2_{*,L}}{\beta(r_{*,L})} \rt)\rt]<\infty$$
 and for every dyadic $r$ with $r_{*,L}\leq r\leq L$ 
  \begin{equation}\label{eq:concentrationrstar}
  \frac{1}{r^d}W_{Q_r}^2\lt(\mu, \frac{\mu(Q_r)}{r^d}\rt)\le r_{*,L}^2 \beta\lt(\frac{r}{r_{*,L}}\rt).
 \end{equation}
\end{theorem}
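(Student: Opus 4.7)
The plan is to take $r_{*,L}$ to be the smallest dyadic radius at which the required inequality \eqref{eq:concentrationrstar} propagates to all larger dyadic scales, and to convert Lemma \ref{lem:concentration} into a stretched exponential tail for $r_{*,L}$ by a union bound over scales.

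Concretely, I would define
\[
r_{*,L} := \inf\lt\{r_0\in\{2^k : k\ge 0\} : \tfrac{1}{r^d}W_{Q_r}^2\bigl(\mu,\tfrac{\mu(Q_r)}{r^d}\bigr)\le r_0^2\beta\bigl(\tfrac{r}{r_0}\bigr)\ \textrm{ for every dyadic } r\in[r_0,L]\rt\},
\]
with the convention $r_{*,L}=2L$ if no such dyadic $r_0\le L$ qualifies, so that \eqref{eq:concentrationrstar} holds by construction. For a dyadic $R$, the event $\{r_{*,L}>R\}$ forces the defining condition to fail at $r_0=R$, and a union bound gives
\[
\PP[r_{*,L}>R]\le \sum_{\substack{r\in[R,L]\\r\ \textrm{dyadic}}}\PP\lt[\tfrac{1}{r^d}W^2_{Q_r}\bigl(\mu,\tfrac{\mu(Q_r)}{r^d}\bigr)>R^2\beta\bigl(\tfrac{r}{R}\bigr)\rt].
\]
I would apply Lemma \ref{lem:concentration} at each scale with $M=R^2\beta(r/R)/\beta(r^d)$. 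A short computation shows that the minimum of $M$ over $r\in[R,L]$ is attained at $r=R$ and equals $R^2\beta(1)/\beta(R^d)$, which depends only on $R$ and diverges as $R\to\infty$; hence $M\gg 1$ uniformly in $r$ and $L$ as soon as $R\ge R_0$ for some universal constant $R_0$. Each summand is then controlled by $\exp(-cR^2\beta(r/R)r^{d-2})$; writing $r=2^kR$ and summing the resulting geometric-type series yields $\PP[r_{*,L}>R]\les\exp(-cR^d)$ for $d\ge 3$ and $\PP[r_{*,L}>R]\les\exp(-cR^2)$ for $d=2$, using in the two-dimensional case the elementary bound $\log(1+2^k)\ge \max(\log 2, k\log 2)$ to reduce to a convergent geometric series.

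Finally, to deduce $\sup_L\EE[\exp(c r_{*,L}^2/\beta(r_{*,L}))]<\infty$ from these tail bounds, I would split the expectation along the dyadic annuli $\{2^kR_0<r_{*,L}\le 2^{k+1}R_0\}$: on each annulus the integrand is at most $\exp(c(2^{k+1}R_0)^2/\beta(2^{k+1}R_0))$ and the probability at most $\exp(-c'(2^kR_0)^2)$ or better. In $d\ge 3$ one has $\beta\equiv 1$ and the two exponents are both of order $(2^kR_0)^2$, so taking $c$ sufficiently small compared to $c'$ makes the series summable; in $d=2$ the extra factor $1/\beta(2^{k+1}R_0)\to 0$ automatically wins for $k$ large, while the finitely many small-$k$ terms are handled by a further reduction of $c$. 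The only mild technical nuisance I anticipate is the bookkeeping needed to make the constants $R_0$, $c$, $c'$ fit together consistently (and to treat the deterministic regime $r_{*,L}\le R_0$ by a trivial bound); no additional conceptual input beyond Lemma \ref{lem:concentration} is needed.
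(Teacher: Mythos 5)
Your proof is correct, but it takes a genuinely different route from the paper's.

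The paper first constructs an auxiliary random variable
\[
\Theta^L=\sup_{\substack{r=2^k\le L}} \frac{1}{r^d\beta(r^d)}W^2_{Q_r}\bigl(\mu,\tfrac{\mu(Q_r)}{r^d}\bigr),
\]
proves $\sup_L\EE[\exp(\bar c\,\Theta^L)]<\infty$ by combining Lemma \ref{lem:concentration} with a union bound over dyadic scales, and only \emph{then} defines $r_{*,L}$ by inverting a monotone function of $\Theta^L$ (namely $r_{*,L}^2=\Theta^L$ for $d\ge 3$, and $r_{*,L}^2/\beta(r_{*,L})=\Theta^L/\beta(1)$ for $d=2$). This design makes the stretched exponential moment of $r_{*,L}$ an immediate consequence of the exponential moment of $\Theta^L$; what then requires an argument is showing that the bound $\frac{1}{r^d}W^2_{Q_r}\le\Theta^L\beta(r^d)$ implies \eqref{eq:concentrationrstar}, which the paper handles via the monotonicity of $r\mapsto\beta(r/r_{*,L})/\beta(r)$. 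You instead define $r_{*,L}$ \emph{directly} as the minimal dyadic radius from which the bound propagates, use Lemma \ref{lem:concentration} and a union bound to get a tail estimate for $r_{*,L}$, and then convert the tail into a stretched exponential moment by a dyadic decomposition. The two routes are close cousins (both rely on Lemma \ref{lem:concentration} and a union bound over dyadic scales), but they differ in where the complexity lives: the paper's approach makes the moment bound trivial at the cost of a slightly delicate $\beta$-monotonicity step when $d=2$, while yours makes the inequality \eqref{eq:concentrationrstar} hold by construction but then requires the more involved tail-to-moment calculus. A minor imprecision in your sketch: the claim that the minimum of $M=R^2\beta(r/R)/\beta(r^d)$ over $r\in[R,L]$ is attained at $r=R$ is not literally true for all $R$ in $d=2$ (one can check the derivative of $r\mapsto\log(1+r/R)/\log(1+r^2)$ at $r=R$ is negative for small $R$); however the conclusion you actually use, namely that $M\gg 1$ uniformly in $r$ and $L$ for $R\ge R_0$, does hold (e.g.\ by splitting into the regimes $r\le R^2$ and $r>R^2$), so this does not affect the validity of the argument.
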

\begin{proof}
 We first prove that there exist a constant $\bar c>0$ independent of $L$ and for each $L\geq 1$ a    random variable $\Theta^L$  with $\textstyle\sup_L\EE\sqa{\exp(\bar c\Theta^L)}<\infty$
  such that for all dyadic   $r$ with $L\ge r\ge 1$,
\begin{equation}\label{eq:concentrationtheta}
  \frac{1}{r^d \beta(r^d)}  W_{Q_r}^2\left(\mu, \frac{\mu(Q_r)}{r^d}\right)\leq \Theta^L  .
\end{equation}
For  $ k\ge 1$, let $r_k= 2^k$ and put 
\begin{equation}\label{def:theta}
\Theta^L_k= \frac{1}{r_k^d \beta(r_k^d)} W_{Q_{r_k}}^2\left(\mu, \frac{\mu(Q_{r_k})}{r_k^d}\right), \quad \Theta^L=\sup_{ r_k\le L}\Theta^L_k.
\end{equation}
We claim that the exponential moments of $\Theta^L_k$ given by Lemma \ref{lem:concentration} translate into exponential moments for $\Theta^L$. Indeed fix  $1\gg \bar c>0$. Then, we estimate for some fixed $m\gg 1$
\begin{align*}
 \EE[\exp(\bar c \Theta^L)] &\leq \exp(\bar c m) + \sum_{ M\ge m}  \PP[\Theta^L\geq M]\exp(\bar c (M+1)) \\
& \leq \exp(\bar cm ) + \sum_{M\ge m} \exp(\bar c (M+1)) \sum_{r_k\le L} \PP[\Theta^L_k \geq M] \\
& \stackrel{\eqref{eq:concentration}}{\leq} \exp(\bar c m ) +  \exp(\bar c)\sum_{k\geq 1} \sum_{M\ge m}  \exp(-M(cr_k^{d-2}\beta(r_k^d)-\bar c))\\
& \stackrel{r_k=2^k \& \, \bar c\ll1}{\lesssim} \exp(\bar cm ) + \exp(\bar c) \sum_{k\geq 1} \exp( -  c  k) <\infty.
\end{align*}
Hence, $\Theta^L$ has exponential moments and \eqref{eq:concentrationtheta} is satisfied.
Moreover, the bound on $ \EE[\exp(\bar c \Theta^L)]$ is independ of $L$ so that also $\sup_L  \EE[\exp(\bar c \Theta^L)]<\infty.$

Without loss of generality, we may assume that  $\Theta^L\geq 1$. For $d\geq 3$ we set $r_{*,L}^2=\Theta^L.$
For $d=2$ we observe that $\beta(r^d)\sim \beta(r)$ for $r\geq 1$ and define $r_{*,L} \ge  1$  via the equation 
 \begin{equation}\label{def:rstar} 
\frac{r_{*,L}^2}{\beta(r_{*,L})}  = \frac{\Theta^L}{\beta(1)} ,
 \end{equation}
which has a solution since $r_{*,L}\mapsto r_{*,L}^2/\beta\lt(r_{*,L}\rt)$ is monotone on $(0,\infty)$. 
Since $r\mapsto  \beta\lt(\frac{r}{r_{*,L}}\rt)/\beta( r)$ is a non decreasing function, we have for $r\ge  r_{*,L}$,
\[
 \beta( r)\le \frac{ \beta(r_{*,L})}{ \beta(1) } \beta\lt(\frac{r}{r_{*,L}}\rt).
\]
This together with \eqref{def:rstar} gives for every dyadic $r\ge  r_{*,L}$,
\[
 \Theta^L  \beta( r)\le r_{*,L}^2 \beta\lt(\frac{r}{r_{*,L}}\rt),
\]
from which we see that \eqref{eq:concentrationtheta} implies \eqref{eq:concentrationrstar}. 
\end{proof}

\begin{remark}
 We defined the random radius $r_{*,L}$ for the Binomial point process $\mu^{1,L}$ of intensity one. Similarly, we could define a random radius $r_{*,L}^R$ for a Binomial point process $\mu^{R,L}$ of intensity $R^d$. They are connected by the scaling relation 
 $$r_{*,RL} = R r_{*,L}^R.$$
\end{remark}

After this preparation we obtain our main quenched result:

\begin{proposition}\label{prop:quenched}
 Let $\pi^{1,L}$ be the  $Q_L-$periodic optimal transport plan for $W_{per}(\mu^{1,L},1)$. In the event $r_{*,L}<L$  we have for every $ r \ge r_{*,L}$,  
\begin{equation}\label{mainweaksto}
 \lt|\int_{\R^d\times \R^d} \eta_r(x)\lt(y-x-\nabla u_r^{1,L}(0)\rt)d\pi^{1,L}\rt|\les  r_{*,L}^2\frac{\beta\lt(\frac{r}{r_{*,L}}\rt)}{r} 
\end{equation}
and 
\begin{multline}\label{strongeststo}
 \sup \lt\{ \lt|x-\lt(y-\nabla u_r^{1,L}(0)\rt)\rt| \ : \ (x,y)\in \spt\pi^{1,L}\cap ((B_{r}\times\R^d)\rt\}\\
 \les  \,r \lt(\frac{r_{*,L}^2 \beta\lt(\frac{r}{r_{*,L}}\rt)}{r^2}\rt)^{\frac{1}{d+2}}.
\end{multline}
If instead $r\le r_{*,L}$,
then for every $\alpha\in(0,1)$,
 \begin{equation}\label{eq:smallscale}
  \lt|\int_{\R^d\times \R^d} \eta_r(x)\lt(y-x-\nabla u^{1,L}_r(0)\rt)d\pi^{1,L}\rt|\les_\alpha r_{*,L} \lt(\frac{r_{*,L}}{r}\rt)^{2d+\alpha}.
 \end{equation}
\end{proposition}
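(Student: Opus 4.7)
The plan is to split the argument according to whether $r\ge r_{*,L}$ or $r\le r_{*,L}$.

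For the regime $r\ge r_{*,L}$, I would apply Theorem \ref{theo:main} with $\mu=\mu^{1,L}$, $u=u^{1,L}$ and $r_{\ast}=r_{*,L}$. The dyadic cube estimate \eqref{eq:concentrationrstar} supplied by Theorem \ref{thm:mainstoch} is exactly the hypothesis \eqref{hypdata2}. Moreover, since $\mu^{1,L}(Q_L)=L^d$, the starting hypothesis \eqref{hypstart} is the $r\sim L$ case of the same estimate, applied at the dyadic scale closest to $L$. Because $\eta$ is radially symmetric, the object $h_r=\int \eta_r\nabla u^{1,L}$ appearing in \eqref{defhR} coincides with $\nabla u^{1,L}_r(0)$, so the conclusions \eqref{mainweaktheo} and \eqref{strongestimintro} of Theorem \ref{theo:main} are precisely \eqref{mainweaksto} and \eqref{strongeststo}.

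For the regime $r\le r_{*,L}$, the plan is to compare everything to the scale $r_{*,L}$ and reuse the estimates just established. Since the first marginal of $\pi^{1,L}$ is Lebesgue, $\int \eta_r(x)\,d\pi^{1,L}=1$, and adding and subtracting $\nabla u^{1,L}_{r_{*,L}}(0)$ yields
\[
\int \eta_r(x)\bigl(y-x-\nabla u^{1,L}_r(0)\bigr)\,d\pi^{1,L}=\int \eta_r(x)\bigl(y-x-\nabla u^{1,L}_{r_{*,L}}(0)\bigr)\,d\pi^{1,L}+\nabla u^{1,L}_{r_{*,L}}(0)-\nabla u^{1,L}_r(0).
\]
For the first summand, I would invoke the $L^{\infty}$ estimate \eqref{strongeststo} at the scale $r_{*,L}$: since $r_{*,L}^{2}\beta(1)/r_{*,L}^{2}\sim 1$, every $(x,y)\in \spt\pi^{1,L}\cap (B_{r_{*,L}}\times\R^d)$ satisfies $|y-x-\nabla u^{1,L}_{r_{*,L}}(0)|\les r_{*,L}$, and since $\spt\eta_r\subset B_r\subset B_{r_{*,L}}$ the integral is $\les r_{*,L}$. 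For the second summand, I would apply Lemma \ref{lem:difh} with $r'=r_{*,L}$, obtaining $|\nabla u^{1,L}_{r_{*,L}}(0)-\nabla u^{1,L}_r(0)|\les_\alpha (r_{*,L}/r)^{d+\alpha}|B_{r_{*,L}}|^{-1/2}W_{B_{r_{*,L}}}(\mu^{1,L},\kappa)$; the Wasserstein factor is controlled by the ball analog of \eqref{eq:concentrationrstar} at the dyadic scale comparable to $r_{*,L}$, giving $\les r_{*,L}^{1+d/2}$. Altogether the second summand is at most $(r_{*,L}/r)^{d+\alpha}r_{*,L}$, which is dominated by $r_{*,L}(r_{*,L}/r)^{2d+\alpha}$ for $r\le r_{*,L}$, and combining both summands delivers \eqref{eq:smallscale}.

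The only non-routine step is verifying the periodic starting hypothesis \eqref{hypstart} for Theorem \ref{theo:main} and the balls-versus-cubes translation needed to bring \eqref{eq:concentrationrstar} into Lemma \ref{lem:difh}; both are standard since balls and cubes of comparable size differ only by absolute constants in the Wasserstein bound. I expect the estimate \eqref{eq:smallscale} to be mildly wasteful (the above argument actually produces the sharper exponent $d+\alpha$), but the form stated suffices for the applications later in the paper.
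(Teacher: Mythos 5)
The $r\geq r_{*,L}$ half of your plan is sound and matches the paper. The only wrinkle you glide past is that Theorem~\ref{theo:main} delivers its conclusions for $r\gtrsim r_{*,L}$ with an implicit constant, so a small intermediate range $[r_{*,L},\bar r]$, $\bar r\sim r_{*,L}$, remains to be covered; the paper handles it with the same change-of-scale comparison you invoke below $r_{*,L}$.

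For the $r\leq r_{*,L}$ regime there is a genuine gap: the first marginal of $\pi^{1,L}$ is $\mu^{1,L}$, not Lebesgue. The coupling goes from $\mu^{1,L}$ (the $x$-variable) to $\mathsf{Leb}$ (the $y$-variable) --- see \eqref{Z}, where $Z^{R,L}$ is manifestly a sum of Dirac masses at the sample points $X_i$, and see the construction of $\pi^r$ in Step~1 of the proof of Theorem~\ref{theo:main}. Consequently $\int\eta_r(x)\,d\pi^{1,L}=\int\eta_r\,d\mu^{1,L}$, a random quantity that diverges like $r^{-d}$ as $r\to 0$ when a sample point lies near the origin, so your decomposition must carry the factor $\int\eta_r\,d\mu^{1,L}$ on the shift term $\nabla u^{1,L}_{r_{*,L}}(0)-\nabla u^{1,L}_r(0)$. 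The $L^\infty$ estimate for the first summand likewise yields $\lesssim r_{*,L}\int\eta_r\,d\mu^{1,L}$ rather than $\lesssim r_{*,L}$. To close the argument one still needs $\int\eta_r\,d\mu^{1,L}\lesssim (r_{*,L}/r)^d$, obtained from $\eta_r\lesssim(\bar r/r)^d\eta_{\bar r}$ together with $\int\eta_{\bar r}\,d\mu^{1,L}\lesssim 1$ (the latter from \cite[Lem.~4.4]{GHO}), and it is precisely this extra factor that upgrades $d+\alpha$ to the stated $2d+\alpha$. Your closing remark that the computation actually gives the ``sharper'' exponent $d+\alpha$ is therefore not an improvement but an artifact of the dropped marginal.
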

\begin{proof}
 Applying  Theorem \ref{theo:main} to $\mu^{1,L}$, we obtain \eqref{mainweaksto} and \eqref{strongeststo} for $r\ges r_{*,L}$.\\
To simplify notation we let $h_r=\nabla u^{1,L}_r(0)$. By \eqref{eq:concentrationrstar} and \cite[Lem. 2.10]{GHO}, we can find $\bar r\sim r_{*,L}$ large enough for \eqref{mainweaksto} and \eqref{strongeststo} to hold and  such that 
 \begin{equation}\label{eq:difhhyp}
  \frac{1}{|B_{\bar r}|}W_{B_{\bar r}}^2(\mu^{1,L},\kappa)\les  r_{*,L}^2.
 \end{equation}
By Lemma \ref{lem:difh} we have for $r\le \bar r$ and $\alpha\in(0,1)$, 
\begin{equation}\label{eq:difh}
 |h_r-h_{\bar r}|\les_\alpha \lt(\frac{\bar r}{r}\rt)^{d+\alpha} r_{*,L}\les_\alpha \lt(\frac{r_{*,L}}{r}\rt)^{d+\alpha} r_{*,L}.
\end{equation}
Using this for $r=r_{*,L}$ together with the triangle inequality, this extends \eqref{strongeststo} to all $r\ge r_{*,L}$. We now claim that it also implies that  for $r\le \bar r$ and $\alpha\in(0,1)$
\begin{equation}\label{claim:changer}
 \lt|\int_{\R^d\times \R^d} \eta_r(x)\lt(y-x-h_r\rt)d\pi^{1,L}\rt|\les_\alpha r_{*,L} \lt(\frac{r_{*,L}}{R}\rt)^{2d+\alpha}.
\end{equation}
Using this first for $r_{*,L}\le r\le \bar r$ would extend the validity of \eqref{mainweaksto} to all $r\ge r_{*,L}$. The case $r\le r_{*,L}$ would give \eqref{eq:smallscale}. In order to prove \eqref{claim:changer}, we first write
\[
 \int \eta_r(x)\lt(y-x-h_r\rt)d\pi^{1,L}=\int \eta_r(x) \lt(y-x-h_{\bar r}\rt)d\pi^{1,L}+ \int \eta_r(x) \lt(h_{\bar r}-h_r\rt)d\pi^{1,L}
\]
and estimate separately both terms. For the first one we use \eqref{strongeststo}  for $\bar r$ together with the fact that $B_r\subset B_{\bar r}$ and $\bar r\sim r_{*,L}$ to get
\[
 \lt|\int \eta_r(x) (y-x-h^{1,L}_{\bar r})d\pi^{1,L}\rt|\les r_{*,L} \int \eta_r d\mu^{1,L}.
\]
For the second term we use \eqref{eq:difh} and $\bar r\sim r_{*,L}$ to get
\[
 \lt|\int \eta_r(x) (h_{\bar r}-h_r)d\pi^{1,L}\rt|\les_\alpha r_{*,L} \lt(\frac{r_{*,L}}{r}\rt)^{d+\alpha}  \int \eta_r d\mu^{1,L}.
\]
Finally, we can argue
\[
 \int \eta_r d\mu^{1,L}\le \lt(\frac{\bar r}{r}\rt)^d \int \eta_{\bar r} d\mu^{1,L}\les \lt(\frac{r_{*,L}}{r}\rt)^d,
\]
where the second inequality follows from \cite[(4.58) Lem. 4.4]{GHO}. Since $\frac{r_{*,L}}{r}\ges 1$, this ends the proof of \eqref{eq:smallscale}.
\end{proof}

We may now prove the main annealed result of this section: 
\begin{proposition}\label{theop}
 For every $L\gg r\ge 1$ and $p\geq 1$
 \begin{equation}\label{moment}
  \EE\lt[\lt|\int \eta_r (x) (y-x- \nabla u^{1,L}_r(0)) d\pi^{1,L}\rt|^p\rt]^{\frac{1}{p}}\les \frac{\beta(r)}{r}.
 \end{equation}
\end{proposition}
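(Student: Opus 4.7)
The plan is to combine the quenched bounds from Proposition \ref{prop:quenched} with the stretched exponential tail of $r_{*,L}$ from Theorem \ref{thm:mainstoch}. Writing $Y = \int\eta_r(x)(y-x-\nabla u^{1,L}_r(0))d\pi^{1,L}$, I would split expectation according to whether the random length scale $r_{*,L}$ is below or above the deterministic scale $r$:
\[
\EE[|Y|^p] = \EE[|Y|^p \chi_{\{r_{*,L}\le r\}}] + \EE[|Y|^p \chi_{\{r_{*,L}> r\}}].
\]

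\textbf{Good event.} On $\{r_{*,L}\le r\}$ the quenched estimate \eqref{mainweaksto} applies and gives $|Y|\lesssim r_{*,L}^2 \beta(r/r_{*,L})/r$. Since $\beta$ is increasing and $r_{*,L}\ge 1$, in $d\ge 3$ this is just $r_{*,L}^2/r$, while in $d=2$ we use $\log(1+r/r_{*,L})\le \log(1+r) = \beta(r)$ to bound it by $r_{*,L}^2\,\beta(r)/r$. In either case the contribution is $\lesssim \beta(r)/r\cdot r_{*,L}^2$. Taking the $L^p$ norm, the conclusion follows from the fact that stretched exponential integrability of $r_{*,L}^2/\beta(r_{*,L})$ (uniform in $L$) implies all polynomial moments of $r_{*,L}$ are bounded uniformly in $L$.

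\textbf{Bad event.} On $\{r_{*,L}>r\}$ I would invoke the small-scale bound \eqref{eq:smallscale}, which for any fixed $\alpha\in(0,1)$ yields $|Y|\lesssim_\alpha r_{*,L}\,(r_{*,L}/r)^{2d+\alpha}$. Applying Cauchy--Schwarz,
\[
\EE\bigl[|Y|^p\chi_{\{r_{*,L}>r\}}\bigr]^{1/p}\lesssim_\alpha \frac{1}{r^{2d+\alpha}}\EE\bigl[r_{*,L}^{2p(2d+\alpha+1)}\bigr]^{1/(2p)}\,\PP(r_{*,L}>r)^{1/(2p)}.
\]
The first expectation is bounded uniformly in $L$ by the stretched exponential integrability. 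For the probability I use monotonicity of $t\mapsto t^2/\beta(t)$ (on $t\ge 1$) together with Markov's inequality applied to $\exp(c r_{*,L}^2/\beta(r_{*,L}))$ to obtain $\PP(r_{*,L}>r)\lesssim \exp(-c r^2/\beta(r))$. Hence this bad-event contribution decays faster than any polynomial in $r$ and is certainly absorbed into $\beta(r)/r$.

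\textbf{Main obstacle.} The delicate balance is in the bad event: the quenched bound \eqref{eq:smallscale} blows up like $(r_{*,L}/r)^{2d+\alpha}$ for small $r$, so the gain must come entirely from the Gaussian-type tail of $r_{*,L}$. What makes the argument go through is precisely that the tail $\PP(r_{*,L}>r)$ decays like $\exp(-c r^2/\beta(r))$, which dominates any polynomial loss coming from the large ratio $r_{*,L}/r$ on this event. Once this is seen, combining the two contributions immediately gives \eqref{moment}.
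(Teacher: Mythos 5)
Your decomposition into the good event $\{r_{*,L}\le r\}$ and bad event $\{r_{*,L}>r\}$, together with the use of \eqref{mainweaksto} and \eqref{eq:smallscale} respectively, matches the paper's strategy. However there is a gap: Proposition \ref{prop:quenched} only asserts \eqref{mainweaksto} and \eqref{eq:smallscale} \emph{in the event} $r_{*,L}<L$ (this is forced by the hypothesis $L\gg r_*$ of the deterministic Theorem \ref{theo:main}). Your bad event $\{r_{*,L}>r\}$ contains $\{r_{*,L}\ge L\}$, and there you have no right to invoke \eqref{eq:smallscale}. The paper closes this precisely by first noting the crude deterministic bound
\[
\lt|\int \eta_r (x) (y-x- \nabla u^{1,L}_r(0)) d\pi^{1,L}\rt|\les \mu_r^{1,L}(0)\bra{L + |\nabla u_r^{1,L}(0)|},
\]
which holds always since $|x-y|\les L$ on $\spt\pi^{1,L}$, and then pairs this with $\PP[r_{*,L}\ge L]\les \exp(-cL^2/\beta(L))$ and the moment bounds of Lemmas \ref{lem:boundmu} and \ref{lem:boundnablu} to discard the event $\{r_{*,L}\ge L\}$ up front. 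You should add this preliminary reduction; after that, your argument (with its H\"older/Cauchy--Schwarz treatment of the remaining bad event $\{r<r_{*,L}<L\}$) is sound and coincides with the paper's. The good-event estimate and the use of the stretched-exponential integrability of $r_{*,L}$ are fine as written.
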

\begin{proof}
 Let $p\ge 1$ be fixed. We start by observing that we have the deterministic bound
 \[
  \lt|\int \eta_r (x) (y-x- \nabla u^{1,L}_r(0)) d\pi^{1,L}\rt|^p\les |\mu_r^{1,L}(0)|^p\lt( L^p  + |\nabla u_r^{1,L}(0)|^p\rt). 
 \]
 Since $\PP[r_{*,L}\geq L]\les \exp \lt(-c \frac{L^2}{\beta(L)}\rt)$ and since we have control on arbitrary high moments of $\mu_r^{1,L}(0)$ and $\nabla u_r^{1,L}(0)$ by Lemma \ref{lem:boundmu} and Lemma \ref{lem:boundnablu}, 
 we may  restrict ourselves to the event $r_{*,L}<L$. 
 We  now estimate
 \begin{align*}
\lefteqn{  \EE\lt[\lt|\int \eta_r (x) \lt(y-x-\nabla u^{1,L}_r(0)\rt) d\pi^{1,L}\rt|^p\rt]^{\frac{1}{p}}}\\
&\le \EE\lt[\chi_{\{r_{*,L}\le r\}} \lt|\int \eta_r (x) (y-x-\nabla u^{1,L}_r(0)) d\pi^{1,L}\rt|^p \rt]^{\frac{1}{p}} \\
  & \quad + \EE\lt[\chi_{\{r_{*,L}\ge r\}} \lt|\int \eta_r (x) (y-x-\nabla u^{1,L}_r(0)) d\pi^{1,L}\rt|^p  \rt]^{\frac{1}{p}} \\
  &\stackrel{\eqref{mainweaksto}\&\eqref{eq:smallscale}}{\les} \frac{1}{r}\EE\lt[r_{*,L}^{2p} \beta^p\lt(\frac{r}{r_{*,L}}\rt)\rt]^{\frac{1}{p}}
  + \EE\lt[\chi_{\{r_{*,L}\ge r\}} \lt(r_{*,L} \lt(\frac{r_{*,L}}{r}\rt)^{2d+\alpha}\rt)^p \rt]^{\frac{1}{p}}. 
 \end{align*}
By the stretched exponential moment of $r_{*,L}$, we have 
\[
\frac{1}{r}\EE\lt[r_{*,L}^{2p} \beta^p\lt(\frac{r}{r_{*,L}}\rt)\rt]^{\frac{1}{p}} + \EE\lt[\chi_{\{r_{*,L}\ge r\}} \lt(r_{*,L} \lt(\frac{r_{*,L}}{r}\rt)^{2d+\alpha}\rt)^p \rt ]^{\frac{1}{p}} \les \frac{\beta(r)}{r},
\]
which concludes the proof.
\end{proof}

\subsection{Main convergence result}\label{sec:mainconv}

We now use Theorem \ref{theo:mainnegsob} and Lemma \ref{lem:removesmallscale} to obtain a quantitative bound on the distance between the displacement and $\nabla u^{R,L}$
in appropriate negative Sobolev spaces. We recall that if $\pi^{R,L}$ is the $Q_L-$periodic optimal transport plan between $\mu^{R,L}$ and $1$, we introduced the distribution $Z^{R,L}$ defined by
\[
 Z^{R,L}(f)=R^{\frac{d}{2}}\int_{Q_L\times \R^d} f(x) (y-x) d\pi^{R,L}.
\]
As already observed in Section \ref{sec:linear} we expect a different behavior for scales larger or smaller than $\frac{1}{R}$.
For the large scales $Z^{R,L}$ will be close to $\nabla u^{R,L}$ and thus  essentially Gaussian 
 while at small scales it has no better regularity than Dirac masses, recall \eqref{Z}.  It is thus natural as in Theorem \ref{theo:convlinearbody} to consider both the behavior of $Z^{R,L}$ and $Z^{R,L}_{\frac{1}{R}}$. We start with the latter since the analysis is simpler.
 \begin{theorem}\label{theo:quantsmall}
  For every $d\ge 3$, $p\ge 2$, $\gamma>0$ with $\gamma\notin \N$, $L\gg\ell\gg1$ and $R\gg1$  we have
  \begin{equation}\label{eq:quantsmall}
  \frac{1}{|B_{\ell}|} \EE[\|Z^{R,L}_{\frac{1}{R}}-\nabla u^{R,L}_{\frac{1}{R}}\|^p_{W^{-\gamma,p}(B_{\ell})}]\les \frac{1}{R^{\frac{p}{2}(4-d)}}\lt(1+R^{p(1-\gamma)}\rt).
  \end{equation}
If $d=2$, $p\ge 2$, $\gamma>0$, with $\gamma\notin \N$,
$L\gg\ell\gg1$ and $R\gg1$  we have
  \begin{multline}\label{eq:quantsmalllog}
  \frac{1}{|B_{\ell}|} \EE\lt[\lt\|Z^{R,L}_{\frac{1}{R}}- \mu^{R,L}_{\frac{1}{R}} \nabla u^{R,L}_{1}(0) -\lt( \nabla u^{R,L}_{\frac{1}{R}}-\nabla u^{R,L}_{1}(0)\rt) \rt\|^p_{W^{-\gamma,p}(B_{\ell})}\rt]\\
  \les\lt(\frac{\log R\ell}{R}\rt)^p\lt(1+R^{p(1-\gamma)}\rt).
  \end{multline}
 \end{theorem}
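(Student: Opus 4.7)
The strategy is to invoke Lemma \ref{lem:removesmallscale} at the microscopic cutoff $t=1/R$, reducing the negative Sobolev norm of $Z^{R,L}-\nabla u^{R,L}$ (respectively its $d=2$ analogue) to an integral of pointwise $L^p$-moments of mollifications at scales $\eps\in[1/R,1]$, and then to apply the annealed linearization estimate of Proposition \ref{theop} after rescaling to obtain those moments.

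Using the push-forward relation between $\pi^{R,L}$ and $\pi^{1,RL}$ coming from the scaling \eqref{rescalingnabu}, one checks that for $\eps>0$ and $x\in Q_L$
\[
Z^{R,L}_\eps(x)=R^{\frac{d}{2}-1}Z^{1,RL}_{R\eps}(Rx),\quad \nabla u^{R,L}_\eps(x)=R^{\frac{d}{2}-1}\nabla u^{1,RL}_{R\eps}(Rx),\quad \mu^{R,L}_\eps(x)=\mu^{1,RL}_{R\eps}(Rx),
\]
where we write $Z^{1,L}_r(x_0):=\int\eta_r(x-x_0)(y-x)\,d\pi^{1,L}$. By $Q_{RL}$-stationarity and Proposition \ref{theop} applied at base point $Rx$ with $r=R\eps\ge 1$, this yields the key pointwise moment bound
\[
\EE\bigl[\bigl|Z^{R,L}_\eps(x)-\mu^{R,L}_\eps(x)\nabla u^{R,L}_\eps(x)\bigr|^p\bigr]^{\frac{1}{p}}\lesssim R^{\frac{d}{2}-2}\frac{\beta(R\eps)}{\eps}\qquad\text{for }\eps\ge 1/R.
\]
In parallel, Lemmas \ref{lem:boundmu} and \ref{lem:boundnablu} (rescaled) give $\EE[|\mu^{R,L}_\eps-1|^{2p}]^{1/(2p)}\lesssim (R\eps)^{-d/2}$ and, for $d\ge 3$, $\EE[|\nabla u^{R,L}_\eps|^{2p}]^{1/(2p)}\lesssim\eps^{-(d-2)/2}$; for $d=2$, \eqref{eq:momentnabulog} provides in addition $\EE[|\nabla u^{R,L}_\eps-\nabla u^{R,L}_1(0)|^{2p}]^{1/(2p)}\lesssim 1+\log^{1/2}(\ell/\eps)$.

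For $d\ge 3$ I would decompose
\[
Z^{R,L}_\eps-\nabla u^{R,L}_\eps=\bigl(Z^{R,L}_\eps-\mu^{R,L}_\eps\nabla u^{R,L}_\eps\bigr)+(\mu^{R,L}_\eps-1)\nabla u^{R,L}_\eps,
\]
controlling the first summand as above and the second via H\"older's inequality. Since the resulting product bound $R^{-d/2}\eps^{-(d-1)}$ is dominated by $R^{d/2-2}/\eps$ for $\eps\ge 1/R$, the total pointwise $L^p$-moment is $\lesssim R^{d/2-2}/\eps$. Plugging into Lemma \ref{lem:removesmallscale} with $t=1/R$, the boundary term contributes $R^{p(d/2-1)-p\gamma}=R^{p(d-2)/2-p\gamma}$ while the integral $R^{p(d/2-2)}\int_{1/R}^1\eps^{p(\gamma-1)-1}d\eps$ contributes $R^{p(d-4)/2}$ if $\gamma>1$ and $R^{p(d-2)/2-p\gamma}$ if $\gamma<1$, combining into $R^{-p(4-d)/2}(1+R^{p(1-\gamma)})$ as required.

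For $d=2$ I would use the algebraic identity
\begin{align*}
&Z^{R,L}_\eps-\mu^{R,L}_\eps\nabla u^{R,L}_1(0)-\bigl(\nabla u^{R,L}_\eps-\nabla u^{R,L}_1(0)\bigr)\\
&\qquad=\bigl(Z^{R,L}_\eps-\mu^{R,L}_\eps\nabla u^{R,L}_\eps\bigr)+(\mu^{R,L}_\eps-1)\bigl(\nabla u^{R,L}_\eps-\nabla u^{R,L}_1(0)\bigr),
\end{align*}
which arranges the logarithmically diverging shift $\nabla u^{R,L}_1(0)$ so that it cancels from each summand. The first is then $\lesssim R^{-1}\log(R\eps)/\eps$ from Proposition \ref{theop}, while the second is $\lesssim (R\eps)^{-1}(1+\log^{1/2}(\ell/\eps))$ by H\"older. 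Integrating via Lemma \ref{lem:removesmallscale} and collecting the polylogarithmic factors into a global $\log^p(R\ell)$ yields \eqref{eq:quantsmalllog}. The main technical obstacle is this bookkeeping: verifying that the auxiliary term $(\mu^{R,L}-1)\nabla u^{R,L}$ (or its $d=2$ variant) is of the same order as the main term at the critical scale $\eps=1/R$, and that the competition between the boundary and integral contributions in Lemma \ref{lem:removesmallscale} produces the stated exponents sharply; this works because $1/R$ is precisely the microscopic Poisson scale below which $\mu^{R,L}$ has no better regularity than Dirac masses.
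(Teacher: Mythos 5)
Your proposal is correct and follows essentially the same route as the paper's proof: you reduce via Lemma \ref{lem:removesmallscale} at $t=1/R$ to pointwise $L^p$-moments of $\eps$-mollifications for $\eps\in[1/R,1]$, rescale to intensity one (the identities you state are correct), and then use the same decomposition $Z_\eps-\nabla u_\eps=(Z_\eps-\mu_\eps\nabla u_\eps)+(\mu_\eps-1)\nabla u_\eps$ (with the analogous shifted variant in $d=2$), treating the first piece by Proposition \ref{theop} and the second by H\"older together with \eqref{eq:momentmu} and \eqref{eq:momentnabu} (resp.\ \eqref{eq:momentnabulog}). The bookkeeping of exponents in the boundary and integral contributions of Lemma \ref{lem:removesmallscale} matches the paper's, producing the stated $R^{-p(4-d)/2}(1+R^{p(1-\gamma)})$ and $(\log(R\ell)/R)^p(1+R^{p(1-\gamma)})$.
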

 \begin{proof}
  We start with \eqref{eq:quantsmall}. We claim that for $\eps\in\lt[\frac{1}{R},1\rt]$ and  $x\in B_{3\ell}$, 
  \begin{equation}\label{claim:quantsmall}
   \EE\lt[\lt|Z_\eps^{R,L}(x)-\nabla u^{R,L}_\eps(x)\rt|^p\rt]^{\frac{1}{p}}\les  \frac{1}{R^{\frac{1}{2}(4-d)}\eps}.
  \end{equation}
Using  \eqref{eq:removesmallscale} of Lemma \ref{lem:removesmallscale} this would yield \eqref{eq:quantsmall}. In order to prove \eqref{claim:quantsmall} we notice
that by stationarity and rescaling (recall \eqref{rescalingnabu}), it is enough to prove that for $L\gg r\ge 1$,
\begin{equation}\label{claim:quantsmallrescale}
   \EE\lt[\lt|Z_r^{1,L}(0)-\nabla u^{1,L}_r(0)\rt|^p\rt]^{\frac{1}{p}}\les  \frac{1}{r}.
  \end{equation}
  Using triangle inequality we have 
  \begin{multline*}
    \EE\lt[\lt|Z_r^{1,L}(0)-\nabla u^{1,L}_r(0)\rt|^p\rt]^{\frac{1}{p}}\le  \EE\lt[\lt|Z_r^{1,L}(0)- \mu_r^{1,L}(0)\nabla u^{1,L}_r(0)\rt|^p\rt]^{\frac{1}{p}}\\
    + \EE\lt[\lt|(1- \mu_r^{1,L}(0))\nabla u^{1,L}_r(0)\rt|^p\rt]^{\frac{1}{p}}.
    \end{multline*}
    For the first term we use 
    \[
    \EE\lt[\lt|Z_r^{1,L}(0)- \mu_r^{1,L}(0)\nabla u^{1,L}_r(0)\rt|^p\rt]^{\frac{1}{p}}=
   \EE\lt[\lt|\int \eta_r(x)(y-x -\nabla u^{1,L}_r(0)) d\pi^{1,L}\rt|^p\rt]^{\frac{1}{p}}
   \stackrel{\eqref{moment}}{\les}\frac{1}{r}.
   \]
   For the second one we use H\"older's inequality to get 
   \begin{multline*}
    \EE\lt[\lt|(1- \mu_r^{1,L}(0))\nabla u^{1,L}_r(0)\rt|^p\rt]^{\frac{1}{p}}\le \EE\lt[\lt|(1- \mu_r^{1,L}(0))\rt|^{2p}\rt]^{\frac{1}{2p}}\EE\lt[\lt|\nabla u^{1,L}_r(0)\rt|^{2p}\rt]^{\frac{1}{2p}}\\
    \stackrel{\eqref{eq:momentmu}\&\eqref{eq:momentnabu}}{\les}  \frac{1}{r^{d-1}}\les \frac{1}{r}.
   \end{multline*}
This concludes the proof of \eqref{claim:quantsmallrescale}.\\
We now turn to  \eqref{eq:quantsmalllog}. We claim that  for $\eps\in\lt[\frac{1}{R},1\rt]$ and  $x\in B_{3\ell}$,
 \begin{equation}\label{claim:quantsmalllog}
   \EE\lt[\lt|Z_\eps^{R,L}(x)-\mu^{R,L}_{\eps}(x) \nabla u^{R,L}_{1}(0) -\lt( \nabla u^{R,L}_{\eps}(x)-\nabla u^{R,L}_{1}(0)\rt)\rt|^p\rt]^{\frac{1}{p}}\les  \frac{\log (R \ell)}{R\eps}.
  \end{equation}
 As above, using \eqref{eq:removesmallscale} of Lemma \ref{lem:removesmallscale} this would give  \eqref{eq:quantsmalllog}.
By scaling, in order to prove \eqref{claim:quantsmalllog}, it is enough to prove that for $1\le r\le R\ll L$ and $x\in Q_L$
\begin{equation}\label{claim:quantsmallrescalelog}
   \EE\lt[\lt|Z_r^{1,L}(x)-\mu^{1,L}_{r}(x) \nabla u^{1,L}_{R}(0) -\lt( \nabla u^{1,L}_{r}(x)-\nabla u^{1,L}_{R}(0)\rt)\rt|^p\rt]^{\frac{1}{p}}\les \frac{\log (|x|+2)+\log R}{r}.
  \end{equation}
For this we write that by triangle inequality,
\begin{multline*}
  \EE\lt[\lt|Z_r^{1,L}(x)-\mu^{1,L}_{r}(x) \nabla u^{1,L}_{R}(0) -\lt( \nabla u^{1,L}_{r}(x)-\nabla u^{1,L}_{R}(0)\rt)\rt|^p\rt]^{\frac{1}{p}}\\
  \le 
  \EE\lt[\lt|Z_r^{1,L}(x)-\mu^{1,L}_{r}(x)\nabla u^{1,L}_r(x) \rt|^p\rt]^{\frac{1}{p}}+  \EE\lt[\lt|(1-\mu^{1,L}_{r}(x)) ( \nabla u^{1,L}_{r}(x)-\nabla u^{1,L}_{R}(0))\rt|^p\rt]^{\frac{1}{p}}.
\end{multline*}
For the first term we use stationarity to infer 
\begin{align*}
 \EE\lt[\lt|Z_r^{1,L}(x)-\mu^{1,L}_{r}(x)\nabla u^{1,L}_r(x) \rt|^p\rt]^{\frac{1}{p}}&=\EE\lt[\lt|Z_r^{1,L}(0)-\mu^{1,L}_{r}(0)\nabla u^{1,L}_r(0) \rt|^p\rt]^{\frac{1}{p}}\\
 &=\EE\lt[\lt|\int \eta_r(x)(y-x -\nabla u^{1,L}_r(0)) d\pi^{1,L} \rt|^p\rt]^{\frac{1}{p}}\\
 &\stackrel{\eqref{moment}}{\les} \frac{\log(r+1)}{r}.
\end{align*}
For the second term we use as above, H\"older's inequality in combination with \eqref{eq:momentmu} and this time \eqref{eq:momentnabulog} to obtain 
\begin{multline*}
 \EE\lt[\lt|(1-\mu^{1,L}_{r}(x)) ( \nabla u^{1,L}_{r}(x)-\nabla u^{1,L}_{R}(0))\rt|^p\rt]^{\frac{1}{p}}\\
 \les \frac{1}{r} \lt(\log^{\frac{1}{2}}\lt(\frac{R+|x|}{r}\rt)+1\rt) \les 
 \frac{\log\lt(R+|x|+1\rt)}{r}.
\end{multline*}
\end{proof}
\begin{remark}\label{rem:hom}
 If we assume that $R$ is a not too slowly diverging sequence in terms of $L$ (more precisely if $R\gg \log^{\frac{1}{2\gamma}} L$), it can be proven using the same kind of computations that 
 $
  \EE\lt[\| (\mu_{\frac{1}{R}}^{R,L}-1)\nabla u_1^{R,L}(0)\|_{W^{-\gamma,p}(B_\ell)}^p\rt]
 $
is small and thus replace \eqref{eq:quantsmalllog} by an estimate on the simpler quantity
$
 \EE\lt[\|Z^{R,L}_{\frac{1}{R}}- \nabla u^{R,L}_{\frac{1}{R}}\|_{W^{-\gamma,p}(B_\ell)}^p\rt].
$
\end{remark}
We finally consider also the small scales $\eps\le \frac{1}{R}$ (the analog of Remark \ref{rem:hom} of course also holds).
\begin{theorem}\label{theo:quantall}
  For every $d\ge 3$, $p\ge 2$, $\gamma> d\lt(1-\frac{1}{p}\rt)$ with $\gamma\notin \N$, $L\gg\ell\gg1$ and $R\gg1$  we have
  \begin{equation}\label{eq:quantall}
  \frac{1}{|B_{\ell}|} \EE[\|Z^{R,L}-\nabla u^{R,L}\|^p_{W^{-\gamma,p}(B_{\ell})}]\les \frac{1}{R^{\frac{p}{2}(4-d)}}.
  \end{equation}
If $d=2$, $p\ge 2$, $\gamma>2\lt(1-\frac{1}{p}\rt)$ with $\gamma\notin \N$,
$L\gg\ell\gg1$ and $R\gg1$  we have
  \begin{equation}\label{eq:quantalllog}
  \frac{1}{|B_{\ell}|} \EE\lt[\lt\|Z^{R,L}- \mu^{R,L} \nabla u^{R,L}_{1}(0) -\lt( \nabla u^{R,L}-\nabla u^{R,L}_{1}(0)\rt) \rt\|^p_{W^{-\gamma,p}(B_{\ell})}\rt]\les\lt(\frac{\log R\ell}{R}\rt)^p.
  \end{equation}
\end{theorem}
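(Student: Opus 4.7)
The plan is to follow the same scheme as the proof of Theorem \ref{theo:quantsmall}, except now I will apply Theorem \ref{theo:mainnegsob} directly to $Z^{R,L}-\nabla u^{R,L}$ without the preliminary smoothing at scale $1/R$. This forces integration over all scales $\eps\in(0,1]$, and it is precisely at the small-scale end of this integral that the stronger hypothesis $\gamma > d(1-1/p)$ will enter. Concretely, Theorem \ref{theo:mainnegsob} yields
\begin{equation*}
 \frac{1}{|B_\ell|}\EE[\|Z^{R,L}-\nabla u^{R,L}\|^p_{W^{-\gamma,p}(B_\ell)}] \les \frac{1}{|B_\ell|}\int_0^1 \eps^{p\gamma}\int_{B_{2\ell}}\EE[|(Z^{R,L}-\nabla u^{R,L})_\eps(x)|^p]\,dx\,\frac{d\eps}{\eps},
\end{equation*}
and I will split the $\eps$-integral at the transition scale $\eps = 1/R$.

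On $[1/R,1]$ the rescaled parameter $r=R\eps$ lies in the regime where Proposition \ref{theop} applies, so I can recycle verbatim estimate \eqref{claim:quantsmall} from the proof of Theorem \ref{theo:quantsmall}; this yields a contribution $\les R^{-p(4-d)/2}$, using $\gamma>d(1-1/p)\ge 1$. On $(0,1/R]$ Proposition \ref{theop} no longer applies, so I will fall back on the triangle inequality and estimate $\EE[|Z^{R,L}_\eps|^p]^{1/p}$ and $\EE[|\nabla u^{R,L}_\eps|^p]^{1/p}$ separately. The bound on $\nabla u^{R,L}_\eps$ is immediate from \eqref{rescalingnabu} and \eqref{eq:momentnabu}, giving $\EE[|\nabla u^{R,L}_\eps(x)|^p]^{1/p} \les R^{-d(p-2)/(2p)}\eps^{-(d(1-1/p)-1)}$. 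The companion bound
\begin{equation*}
 \EE[|Z^{R,L}_\eps(x)|^p]^{1/p} \les R^{d/p - d/2 - 1}\eps^{-d(1-1/p)}
\end{equation*}
is the core new ingredient. By stationarity and the natural scaling of $Z^{R,L}$ (identical in structure to \eqref{rescalingnabu}), the task reduces to bounding $\EE[|Z^{1,RL}_r(0)|^p]$ for $r\le 1$. Using the explicit representation \eqref{Z}, this quantity is governed by the rare event (probability $\sim r^d$) that a point $Y_i$ of $\mu^{1,RL}$ falls in $B_r$, weighted by $v_i = \int_{A_i}(y-Y_i)\,dy$; the $p$-th moment of $v_i$ will be controlled through $|v_i|^p \le |A_i|^{p-1}\int_{A_i}|y-Y_i|^p\,dy$ together with the stochastic inputs of Theorem \ref{thm:mainstoch} and Proposition \ref{prop:quenched}.

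Plugging both small-scale bounds into the integral, a one-line computation will show that both the $Z$ and $\nabla u$ contributions produce the same scaling $R^{p(d/2-\gamma-1)}$ after integration in $\eps$; integrability at $0$ will demand precisely $\gamma > d(1-1/p)$ (saturated by the $Z^{R,L}$ term, which pins down the hypothesis), while the final inequality $R^{p(d/2-\gamma-1)} \les R^{-p(4-d)/2}$ uses $\gamma \ge 1$, automatic from $\gamma > d(1-1/p) \ge 1$. Combined with the large-scale contribution this proves \eqref{eq:quantall}. The statement \eqref{eq:quantalllog} for $d=2$ is obtained by the identical argument, replacing \eqref{claim:quantsmall} by \eqref{claim:quantsmalllog} at large scales and carrying the renormalizations $\mu^{R,L}\nabla u^{R,L}_1(0)$ and $\nabla u^{R,L}_1(0)$ through; the $\log(R\ell)$ factor is inherited directly from \eqref{claim:quantsmalllog}. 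The hard part will be the small-scale moment bound on $Z^{R,L}_\eps$ itself: the heuristic of isolated Dirac-like peaks of controlled mass is transparent, but averaging rigorously over both the random locations of the $X_i$ and the cell geometry will require carefully combining Poisson-type counting for the number of points in $B_\eps$ with the single-cell displacement moments extracted from Section \ref{sec:sto}.
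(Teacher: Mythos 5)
Your high-level architecture matches the paper's proof exactly: apply Theorem \ref{theo:mainnegsob} to $Z^{R,L}-\nabla u^{R,L}$ without pre-smoothing, split the $\eps$-integral at $1/R$, recycle \eqref{claim:quantsmall} on $[1/R,1]$, and on $(0,1/R)$ give up on cancellation and bound $Z^{R,L}_\eps$ and $\nabla u^{R,L}_\eps$ separately. Your bookkeeping of exponents (the $\nabla u$ contribution, the scaling of $Z^{R,L}_\eps$ analogous to \eqref{rescalingnabu}, the integrability threshold $\gamma>d(1-1/p)$, and the reduction of the final comparison to $\gamma\ge 1$) is all correct.

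The gap is in the step you yourself flag as ``the hard part'': the small-scale moment bound on $Z^{R,L}_\eps$, which after rescaling becomes a bound on $\EE[|\int\eta_r(x)(y-x)\,d\pi^{1,L}|^p]$ for $r<1$. You propose going through the explicit cell representation \eqref{Z}, bounding $|v_i|^p\le |A_i|^{p-1}\int_{A_i}|y-X_i|^p$ and then averaging over the random count in $B_r$ and the cell geometry. This is substantially more delicate than the paper's route and you do not carry it out. The paper avoids any explicit control of $|A_i|$ by a triangle inequality against $\nabla u^{1,L}_{r_*}(0)$ and then $\nabla u^{1,L}_1(0)$: the piece $\int\eta_r(x)(y-x-\nabla u^{1,L}_{r_*}(0))\,d\pi^{1,L}$ is controlled pointwise by the $L^\infty$ bound \eqref{strongeststo} with $r=r_*$ (since $B_r\subset B_{r_*}$), giving $\lesssim \mu^{1,L}_r(0)\,r_{*}$, and the two remaining pieces are $\mu^{1,L}_r(0)$ times $\nabla u^{1,L}_{r_*}(0)-\nabla u^{1,L}_1(0)$ (controlled by Lemma \ref{lem:difh}) and times $\nabla u^{1,L}_1(0)$ (controlled by \eqref{eq:momentnabu}). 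Note that your cell-geometry route would, to control $|A_i|$ and $\sup_{A_i}|y-X_i|$, end up invoking the very same inputs ($\eqref{strongeststo}$ and the $r_*$ moments), just with more bookkeeping; it is not clear you gain anything. A second issue: the target bound $\EE[|Z^{R,L}_\eps(x)|^p]^{1/p}\lesssim R^{d/p-d/2-1}\eps^{-d(1-1/p)}$ is slightly stronger than what is actually proved. Because $\mu^{1,L}_r(0)$, $r_*$ and $\nabla u^{1,L}_1(0)$ are correlated, the decoupling has to go through H\"older with a small excess exponent $p_+=(1+\delta)p$; the paper therefore lands on $r^{-d(1-1/p_+)}$, not $r^{-d(1-1/p)}$. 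This does not affect the theorem (the hypothesis $\gamma>d(1-1/p)$ is strict), but your proposal states the sharper bound as if it were free, and the ``Poisson-type counting'' heuristic glosses over exactly this correlation.
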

\begin{proof}
Within this proof we use the shorthand notation $A\les f(p_+)$ if for any $1\gg \delta>0$, $A\les_\delta f((1+\delta)p)$.

 We start with \eqref{eq:quantall}. In light of \eqref{eq:main} of Theorem \ref{theo:mainnegsob} and \eqref{claim:quantsmall}, it is enough to prove that for $\eps\in(0,\frac{1}{R})$ and $x\in B_{2\ell}$,
 \begin{equation}\label{claim:quantall}
   \EE\lt[\lt|Z_\eps^{R,L}(x)-\nabla u^{R,L}_\eps(x)\rt|^p\rt]^{\frac{1}{p}}\les   \frac{1}{(\eps R)^{1+\frac{d(p_+-2)}{2p_+}}}\frac{1}{ \eps^{\frac{1}{2}(d-2)}}.
  \end{equation}
  Indeed, this would give for $\gamma> d\lt(1-\frac{1}{p}\rt)>1$
  \begin{multline*}
  \frac{1}{|B_{\ell}|} \EE\lt[ \int_0^{\frac{1}{R}}\int_{B_{2\ell}}  \eps^{p\gamma}\lt|Z_\eps^{R,L}(x)-\nabla u^{R,L}_\eps(x)\rt|^p dx \frac{d\eps}{\eps}\rt]\les 
  \int_0^{\frac{1}{R}} \eps^{p\gamma} \frac{1}{(\eps R)^{p+\frac{p}{p_+}\frac{d(p_+-2)}{2}}}\frac{1}{ \eps^{\frac{p}{2}(d-2)}} \frac{d\eps}{\eps}\\
  \les \frac{1}{R^{p\lt(\gamma-\frac{d-2}{2}\rt)}}\les  \frac{1}{R^{\frac{p}{2}(4-d)}}.
  \end{multline*}

  We thus prove \eqref{claim:quantall}. Since we are considering scales for which the linearization ansatz is not expected to be relevant, we use triangle inequality, \eqref{eq:momentnabu}, stationarity and rescaling to reduce the estimate to show for $0<r<1$
  \begin{equation}\label{claim:quantallreduced}
   \EE\lt[\lt|\int \eta_r(x)(y-x) d\pi^{1,L}\rt|^p\rt]^{\frac{1}{p}}\les \frac{1}{r^{\frac{d}{2}}} \frac{1}{r^{\frac{d(p_+-2)}{2p_+}}}.
  \end{equation}
Let $q=(1+\delta)p$ with $0<\delta\ll1$. Recalling the definition of the random radius $r_{*,L}=r_*\geq 1$ (dropping the $L$ for the rest of the proof)  from Section \ref{sec:sto}, notice first that thanks
to its stretched exponential moments, the deterministic bound $|x-y|\les L$ for $(x,y)\in \spt \pi^{1,L}$ and the moment bound \eqref{eq:momentmu} for $\mu_r^{1,L}(0)$, 
we may reduce ourselves to the event $r_*<L$. In this event, we use the triangle inequality to write 
\begin{multline*}
  \EE\lt[\lt|\int \eta_r(x)(y-x) d\pi^{1,L}\rt|^p\rt]^{\frac{1}{p}}\les  \EE\lt[\lt|\int \eta_r(x)(y-x-\nabla u_{r_*}^{1,L}(0)) d\pi^{1,L}\rt|^p\rt]^{\frac{1}{p}}
  \\+ \EE\lt[\lt|\mu_r^{1,L}(0)(\nabla u_{r_*}^{1,L}(0)-\nabla u_1^{1,L}(0))\rt|^p\rt]^{\frac{1}{p}} + \EE\lt[\lt|\mu_r^{1,L}(0)\nabla u_1^{1,L}(0)\rt|^p\rt]^{\frac{1}{p}}.
\end{multline*}
For the first term we use \eqref{strongeststo} for $r=r_*$ together with the fact that $B_r\subset B_{r_*}$ (since $r\leq 1 \leq r_*$) to obtain
\begin{align*}
  \EE\lt[\lt|\int \eta_r(x)(y-x-\nabla u_{r_*}^{1,L}(0)) d\pi^{1,L}\rt|^p\rt]^{\frac{1}{p}}&\les  \EE\lt[\lt| \mu_r^{1,L}(0) r_* \rt|^p\rt]^{\frac{1}{p}}\\
 & \stackrel{\textrm{H\"older}}{\le}  \EE\lt[\lt| \mu_r^{1,L}(0)\rt|^{q}\rt]^{\frac{1}{q}} \EE\lt[ r_*^{\frac{p(1+\delta)}{\delta}}\rt]^{\frac{\delta}{p (1+\delta)}}\\
  &\stackrel{\eqref{eq:momentmu}}{\les_\delta} \frac{1}{r^{\frac{d}{2}}} \frac{1}{r^{\frac{d(q-2)}{2q}}}.
\end{align*}
For the second term we use\footnote{Up to adding yet another intermediate radius $r'\sim r_*$ and using triangle inequality we may assume that \eqref{eq:difhhyp} holds for $\bar r=r_*$.}
\eqref{eq:difh} to obtain
\[
 \EE\lt[\lt|\mu_r^{1,L}(0)(\nabla u_{r_*}^{1,L}(0)-\nabla u_1^{1,L}(0))\rt|^p\rt]^{\frac{1}{p}}\les_\alpha  \EE\lt[\lt|\mu_r^{1,L}(0){r_{*}^{d+\alpha+1}}\rt|^p\rt]^{\frac{1}{p}}\stackrel{\eqref{eq:momentmu}}{\les_\delta} \frac{1}{r^{\frac{d}{2}}} \frac{1}{r^{\frac{d(q-2)}{2q}}}.
\]
Finally the last term is estimated thanks to H\"older's inequality as 
\[
 \EE\lt[\lt|\mu_r^{1,L}(0)\nabla u_1^{1,L}(0)\rt|^p\rt]^{\frac{1}{p}}
 \le \EE\lt[\lt| \mu_r^{1,L}(0)\rt|^{q}\rt]^{\frac{1}{q}} \EE\lt[ |\nabla u_1^{1,L}(0)|^{\frac{p(1+\delta)}{\delta}}\rt]^{\frac{\delta}{p (1+\delta)}}\stackrel{\eqref{eq:momentmu}\&\eqref{eq:momentnabu}}{\les_\delta} \frac{1}{r^{\frac{d}{2}}} \frac{1}{r^{\frac{d(q-2)}{2q}}}.
\]
This proves \eqref{claim:quantallreduced}.\\

We finally show \eqref{eq:quantalllog}. As above, from \eqref{eq:main} and \eqref{claim:quantsmalllog}, it is enough to prove that for $\eps\in(0,\frac{1}{R})$, { $x\in B_\ell$}
\begin{equation}\label{claim:quantalllog}
   \EE\lt[\lt|Z_\eps^{R,L}(x)-\mu^{R,L}_{\eps}(x) \nabla u^{R,L}_{1}(0) -\lt( \nabla u^{R,L}_{\eps}(x)-\nabla u^{R,L}_{1}(0)\rt)\rt|^p\rt]^{\frac{1}{p}}\les\frac{1}{(\eps R)^{1+\frac{p_+-2}{p_+}}}  \log^{\frac{1}{2}} (R \ell).
  \end{equation}
  Using the triangle inequality, \eqref{eq:momentnabulog} and rescaling, we are left with the proof of 
  \begin{equation}\label{claim:quantalllogreduced}
   \EE\lt[\lt|\int \eta_r (y-x) d\pi^{1,L} -\mu^{1,L}_{r}(0) \nabla u^{1,L}_{R}(0)\rt|^p\rt]^{\frac{1}{p}}\les \frac{1}{r}\frac{1}{r^{\frac{p_+-2}{p_+}}}  \log^{\frac{1}{2}} (R \ell)
  \end{equation}
Fix $q=(1+\delta)p$ with $0<\delta\ll1$. As before, we may reduce ourselves to the event $r_*<L$. We use triangle inequality to write
\begin{align*}
\lefteqn{ \EE\lt[\lt|\int \eta_r (y-x) d\pi^{1,L} -\mu^{1,L}_{r}(0) \nabla u^{1,L}_{R}(0)\rt|^p\rt]^{\frac{1}{p}}}\\
&\le
 \EE\lt[\lt|\int \eta_r (y-x - \nabla u_{r_*}^{1,L}(0)) d\pi^{1,L}\rt|^p\rt]^{\frac{1}{p}}+ \EE\lt[\lt| \mu_r^{1,L}(0) ( \nabla u_{r_*}^{1,L}(0)-\nabla u_1^{1,L}(0))\rt|^p\rt]^{\frac{1}{p}} \\
 &+ \EE\lt[\lt| \mu_r^{1,L}(0) (\nabla u_1^{1,L}(0)-\nabla u_R^{1,L}(0))\rt|^p\rt]^{\frac{1}{p}}.
\end{align*}
The first term is bounded exactly as before using \eqref{strongeststo} and \eqref{eq:momentmu}:
\[
  \EE\lt[\lt|\int \eta_r (y-x - \nabla u_{r_*}^{1,L}(0)) d\pi^{1,L}\rt|^p\rt]^{\frac{1}{p}}\les \EE\lt[\lt| \mu_r^{1,L}(0) r_*\rt|^p\rt]^{\frac{1}{p}}\les_\delta \frac{1}{r}\frac{1}{r^{\frac{q-2}{q}}}.
\]
As above, we obtain a similar upper bound for the second term  using \eqref{eq:difh}. For the third term we use H\"older's inequality and \eqref{eq:momentnabulog} to get
\begin{multline*}
 \EE\lt[\lt| \mu_r^{1,L}(0) (\nabla u_1^{1,L}(0)-\nabla u_R^{1,L}(0))\rt|^p\rt]^{\frac{1}{p}}\\
 \les 
 \EE\lt[\lt| \mu_r^{1,L}(0) \rt|^q\rt]^{\frac{1}{q}} \EE\lt[\lt|\nabla u_1^{1,L}(0)-\nabla u_R^{1,L}(0)\rt|^{\frac{p(1+\delta)}{\delta}}\rt]^{\frac{\delta}{p (1+\delta)}}\\
 \les_\delta \frac{1}{r}\frac{1}{r^{\frac{q-2}{q}}}  \log^{\frac{1}{2}} R.
\end{multline*}
This concludes the proof of \eqref{claim:quantalllogreduced}.
\end{proof}

Combining the estimates from Theorem \ref{theo:convlinearbody}, Theorem \ref{theo:quantsmall}, and Theorem \ref{theo:quantall} we obtain our main result including Theorems \ref{thm:intro1} and  \ref{thm:intro2}.

\begin{theorem}\label{thm:maincvg}
A) Let $p\geq 2$, $\gamma > d\lt(1-\frac1p\rt)$.  If $d=3$, $Z^{R,L}$ converges is law in $W^{-\gamma,p}_{loc}$ to $\nabla\Psi$ as $R,L\to\infty$. If $d=2$, $Z^{R,L}-\mu^{R,L}\nabla u^{R,L}_1(0)$ converges in law in $W_{loc}^{-\gamma,p}$ to $\nabla\Psi-\nabla\Psi_1(0)$ as $R,L\to\infty$.

B) Let $p\geq 2$, $\gamma > \frac{d}{2}-1$. If $d=3$, $Z_\frac{1}{R}^{R,L}$ converges in law in $W_{loc}^{-\gamma,p}$ to $\nabla\Psi$ as $R,L\to\infty$. If $d=2$, $Z_\frac{1}{R}^{R,L}-\mu_\frac{1}{R}^{R,L}\nabla u^{R,L}_1(0)$ converges in law in $W_{loc}^{-\gamma,p}$ to $\nabla\Psi-\nabla\Psi_1(0)$ as $R,L\to\infty$.\\

Moreover, under these assumptions, for every $\ell\ge 1$, these random distributions have bounded moments of arbitrary order in $W^{-\gamma,p}(B_\ell)$.
\end{theorem}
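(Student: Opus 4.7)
The plan is to deduce the theorem from a Slutsky-type argument combining the distributional convergence of the linear object (Theorem \ref{theo:convlinearbody}) with the quantitative closeness of the displacement to the linear object in negative Sobolev norms (Theorems \ref{theo:quantsmall} and \ref{theo:quantall}). More precisely, in the separable Banach space $W^{-\gamma,p}(B_\ell)$, if $U_{R,L} \to U$ in law and $V_{R,L} \to 0$ in probability, then $U_{R,L} + V_{R,L} \to U$ in law. So the only inputs needed are the distributional convergence of the linear piece (already in hand) and convergence in probability to zero of the nonlinear error (obtained from the $L^p$ quantitative estimates via Markov's inequality).

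Concretely, for part A with $d=3$, Theorem \ref{theo:convlinearbody} gives $\nabla u^{R,L} \to \nabla \Psi$ in law in $W^{-\gamma,p}_{\loc}$ (the hypothesis $\gamma > d(1-1/p)$ is strictly stronger than the one required there), while Theorem \ref{theo:quantall} yields $\EE[\|Z^{R,L} - \nabla u^{R,L}\|^p_{W^{-\gamma,p}(B_\ell)}] \lesssim R^{-p(4-d)/2}$, which vanishes as $R \to \infty$. The $d=2$ case of part A is handled by writing
\[
Z^{R,L} - \mu^{R,L}\nabla u^{R,L}_1(0) = A^{R,L} + \bigl(\nabla u^{R,L} - \nabla u^{R,L}_1(0)\bigr),
\]
where $A^{R,L}$ is precisely the quantity whose $L^p$ norm is controlled in \eqref{eq:quantalllog} (hence tending to zero in probability), while the second summand converges in law to $\nabla \Psi - \nabla \Psi_1(0)$ by Theorem \ref{theo:convlinearbody}. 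Part B is treated identically with $Z^{R,L}_{1/R}$ and $\nabla u^{R,L}_{1/R}$ replacing their unmollified counterparts, invoking Theorem \ref{theo:quantsmall} instead of Theorem \ref{theo:quantall}; the weaker threshold $\gamma > d/2-1$ becomes admissible because convolution with $\eta_{1/R}$ smooths out the Dirac-mass singularities that $Z^{R,L}$ inherits from $\mu^{R,L}$ at microscopic scales (cf.\ Remark \ref{rem:sobwhiteDir}).

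For the arbitrary-order moment statement I would invoke Remark \ref{rem:Sob}: bounded $q$-th moments in $W^{-\gamma,p}(B_\ell)$ follow from bounded $q$-th moments in some $W^{-\gamma',q}(B_\ell)$ with $\gamma' < \gamma$ and $q \ge p$. Since all bounds in Proposition \ref{prop:moments}, Theorem \ref{theo:quantsmall} and Theorem \ref{theo:quantall} hold for arbitrary exponent, I would re-run them with $q$ in place of $p$ and with a slightly reduced $\gamma'$, then assemble via the triangle inequality. The main potential obstacle is precisely this bookkeeping: one must verify that the strict inequalities $\gamma > d(1-1/p)$ in part A and $\gamma > d/2-1$ in part B leave enough room to pick $q > p$ and $\gamma' < \gamma$ satisfying $\gamma' > d(1-1/q)$, respectively $\gamma' > d/2-1$. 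This is immediate from continuity of the thresholds in $q$ as $q \to p^+$. Beyond this, the proof is a routine assembly of the tools built in Sections \ref{sec:linear}, \ref{sec:deter} and \ref{sec:sto}, with no genuinely new ideas required.
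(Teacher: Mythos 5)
Your Slutsky-type strategy for the convergence-in-law parts is exactly what the paper intends (the paper itself only says ``combining the estimates from Theorems~\ref{theo:convlinearbody}, \ref{theo:quantsmall}, and~\ref{theo:quantall}''), and your decompositions for $d=2,3$ and parts A/B are correct: Theorem~\ref{theo:convlinearbody} handles the linear piece, Theorems~\ref{theo:quantall}/\ref{theo:quantsmall} combined with Markov's inequality give convergence to zero in probability of the error in $W^{-\gamma,p}(B_\ell)$, and these can be combined in the separable Banach space $W^{-\gamma,p}(B_\ell)$ for each fixed $\ell$.

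There is, however, a genuine gap in your treatment of the moment bound in part~A, and your closing claim ``this is immediate from continuity of the thresholds in $q$ as $q\to p^+$'' does not fix it. ``Bounded moments of arbitrary order'' means $\sup \EE[\|Z^{R,L}\|^q_{W^{-\gamma,p}(B_\ell)}]<\infty$ for \emph{every} $q\ge 1$, not merely for $q$ slightly above $p$. Following your own scheme, for a given $q>p$ you want to pick $\gamma'<\gamma$ with $\gamma'>d(1-1/q)$ so that Theorem~\ref{theo:quantall} and Proposition~\ref{prop:moments} apply in $W^{-\gamma',q}$, and then conclude via Remark~\ref{rem:Sob}. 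This requires $\gamma>d(1-1/q)$ for the particular $q$ at hand. But since $d(1-1/q)\uparrow d$ as $q\to\infty$, the condition $\gamma>d(1-1/q)$ fails for large $q$ whenever $\gamma<d$; the hypothesis $\gamma>d(1-1/p)$ of part~A allows, e.g., $d=3$, $p=2$, $\gamma=1.6$, for which already $q=8$ violates $\gamma>d(1-1/q)=2.625$. One can check that the obstruction is not an artifact of the Sobolev embedding step: estimating $\EE[\|\nabla u^{R,L}\|^q_{W^{-\gamma,p}(B_\ell)}]$ directly through \eqref{eq:main}, Minkowski's integral inequality and \eqref{eq:momentnabu} produces the identical constraint $\gamma>d(1-1/q)-1$ from the contribution of scales $\eps\lesssim 1/R$, because the $q$-th moment of $\nabla u^{R,L}_\eps(0)$ genuinely degrades in $\eps R$ as $q$ grows. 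So either the moment claim in part~A needs the stronger restriction $\gamma\ge d$ (or at least a restriction depending on the target moment order), or a different argument exploiting cancellations beyond what is quoted in the paper is needed. In contrast, your handling of part~B is fine: there the relevant threshold for the $q$-th moment in $W^{-\gamma',q}$ is $\gamma'>d/2-1$, which is $q$-independent, so any $\gamma'\in(d/2-1,\gamma)$ works for every $q$, and Remark~\ref{rem:Sob} closes the argument cleanly.
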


 \bibliographystyle{abbrv}

\bibliography{OT}
\end{document}